\documentclass[12pt, psamsfonts]{amsart}

\usepackage{verbatim,enumerate}
\usepackage[english]{babel}
\usepackage{pgf,tikz}
\usetikzlibrary{arrows}
\usetikzlibrary{calc}

\setlength{\textwidth}{450pt}
\setlength{\oddsidemargin}{0pt}
\setlength{\evensidemargin}{0pt}
\setlength{\textheight}{8.5in}
\setlength{\hfuzz}{5pt}

\theoremstyle{plain}

\newtheorem*{theorem*}{Theorem}
\newtheorem{lemma}{Lemma}
\newtheorem*{ctc}{Conway's Thrackle Conjecture}

\theoremstyle{definition}

\newtheorem{remark}{Remark}

\def\deg{\operatorname{deg}}
\def\Im{\operatorname{Im}}
\newcommand{\R}{\mathbb{R}}
\newcommand{\C}{\mathbb{C}}
\newcommand{\Z}{\mathbb{Z}}
\newcommand{\T}{\mathcal{T}}

\begin{document}

\title{Thrackles containing a standard musquash}

\author{Grace Misereh}
\author{Yuri Nikolayevsky}

\address{Department of Mathematics and Statistics, La Trobe University, Melbourne, Australia 3086.}
\email{G.Misere@latrobe.edu.au}
\email{Y.Nikolayevsky@latrobe.edu.au}

\subjclass[2010]{Primary: 05C10, 05C62; Secondary: 68R10}

\keywords{thrackle, Thrackle Conjecture, standard musquash}

\date{\today}

\begin{abstract}
A thrackle is a drawing of a graph in which each pair of edges meets precisely once. Conway's Thrackle Conjecture asserts that a planar thrackle drawing of a graph cannot have more edges than vertices, which is equivalent to saying that no connected component of the graph contains more than one cycle. We prove that a thrackle drawing containing a standard musquash (standard $n$-gonal thrackle) cannot contain any other cycle of length three or five.
\end{abstract}

\maketitle


\section{Introduction}
\label{section:intro}

Let $G$ be a finite simple graph with $n$ vertices and $m$ edges. A \emph{thrackle drawing} of $G$ on the plane is a drawing $\T:G\rightarrow\R^2$, in which every pair of edges meets precisely once, either at a common vertex or at a point of proper crossing (see \cite{LPS97} for definitions of a drawing of a graph and a proper crossing). The notion of thrackle was introduced in the late sixties by John Conway, in relation with the following conjecture.

\begin{ctc}
For a thrackle drawing of a graph on the plane, one has $m\leq n$.
\end{ctc}

Despite considerable effort, the conjecture remains wide open. At present, there are three main approaches to investigating the Conjecture.
The first one, which was pioneered in \cite{LPS97}, is to relax the definition of the thrackle: instead of requiring that the edges meet exactly once, one requires that every pair of edges meets an odd number of times (either at a proper crossing or at a common vertex). The resulting graph drawing is called a \emph{generalised thrackle}. Generalised thrackles are much more flexible than ``genuine" thrackles and are easier to study (in particular, one can use methods of low-dimensional homology theory as in \cite{GY2000, GMY2004, GY2009}). This approach can produce the upper bounds for the number of edges; the best known one as of today is $m < 1.4n$ obtained by Xu in \cite{YX2014} using a theorem of Archdeacon and Stor. This improves earlier upper bounds of \cite{FP2011, GY2000, LPS97}. However, it seems unlikely that this method alone could lead to the full resolution of the Conjecture, since generalised thrackles are much more flexible than thrackles.

The second approach is to prove the Conjecture within specific classes of drawings: straight line thrackles (\cite[\S4]{E1946}; see also an elegant proof by Perles in \cite{PJS2011}), monotone thrackles \cite{PJS2011}, outerplanar and alternating thrackles \cite{GY2012}, and spherical thrackles \cite{GKY2015}. The philosophy of this approach is the fact that sometimes topological results can be proved using geometry. This leads to the natural question, \emph{what is the best thrackle drawing of a given graph?} The answer to this cannot be a straight-edge drawing, as any even cycle of length at least six can be thrackled, but no such cycle has straight-line thrackle drawing. However, we know no example of a thrackle which cannot be deformed to a spherical thrackle (a thrackle on the sphere whose edges are arcs of great circles). Moving further in this direction, one can show that any thrackle can be drawn on the punctured sphere endowed with the hyperbolic metric, with the vertices at infinity, and with the \emph{geodesic} edges.

The third approach is the study of thrackles with small number of vertices. A folklore fact is that the Thrackle Conjecture is true for graphs having at most $11$ vertices. In \cite{FP2011} it is shown that no bipartite graph of up to $11$ vertices (in particular, no graph containing two non-disjoint six-cycles) can be thrackled. It is further proved that for any $\varepsilon > 0$, the inequality $m < (1+\varepsilon) n$ for a thrackled graph follows from the fact that a finite number of graphs (dumbbells) cannot be thrackled.

Note that a complete classification of graphs that can be drawn as thrackles, \emph{assuming} Conway's Thrackle Conjecture, was given in \cite{WOO71}.

The simplest example of a thrackled cycle is the \emph{standard $n$-musquash}, where $n \ge 3$ is odd: distribute $n$ vertices evenly on a circle and then join by an edge every pair of vertices at the maximal distance from each other. This defines a musquash in the sense of Woodall \cite{WOO71}: \emph{$n$-gonal musquash} is a thrackled $n$-cycle whose successive edges $e_0,\dots,e_{n-1}$ intersect in the following manner: if the edge $e_0$ intersects the edges $e_{k_1},\dots,e_{k_{n-3}}$ in that order, then for all $j=1,\dots,n-1$, the edge $e_j$ intersects the edges $e_{k_1+j},\dots,e_{k_{n-3}+j}$ in that order, where the edge subscripts are computed modulo $n$. A complete classification of musquashes was obtained in  \cite{GD1999,GD2001}: every musquash is either isotopic to a standard $n$-musquash, or is a thrackled six-cycle.

\smallskip

Conway's Thrackle Conjecture is equivalent to the fact that no connected component of a thrackled graph $G$ may contain more than one cycle (and to the fact that no figure-eight graph can be thrackled). We prove the following.

\begin{theorem*} 
Let $\T(G)$ be a thrackle drawing of a graph $G$ such that the drawing of a cycle $c \subset G$ is a standard musquash. Then $G$ contains no $3$- and no $5$-cycles (except possibly for $c$ itself).
\end{theorem*}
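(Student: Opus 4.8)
The plan is to reduce the statement to a mod-$2$ crossing count, which disposes of the generic situation immediately, and then to finish the few surviving configurations using the explicit geometry of the standard musquash. Since a subdrawing of a thrackle is again a thrackle, I may assume $G=c\cup C'$, where $C'$ is the hypothetical $3$- or $5$-cycle; write $\Gamma=\T(c)$ and $\Gamma'=\T(C')$ for the two closed curves and let $n$ be the odd length of $c$. The first thing I would establish is a parity lemma: \emph{every edge $e$ of $C'$ that is not an edge of $c$ meets $\Gamma$ transversally, in the interiors of both arcs, an odd number of times.} Indeed, by the thrackle property $e$ meets each of the $n$ musquash edges exactly once; the only meetings that are not interior crossings occur at an endpoint of $e$ lying on $c$, and each such endpoint $v_i$ absorbs exactly the two musquash edges incident to it (since $G$ is simple, no musquash edge joins the two endpoints of $e$ unless $e$ itself is that edge, which is excluded). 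Hence the number of interior crossings of $e$ with $\Gamma$ is $n-2s$ with $s\in\{0,1,2\}$ the number of endpoints of $e$ on $c$, which is odd.

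Consider first the case that $C'$ shares no vertex with $c$. Then $\Gamma$ and $\Gamma'$ are two closed curves in general position, so their total number of crossings is even; but by the parity lemma this total is a sum of $|C'|$ odd numbers, and since $|C'|\in\{3,5\}$ is odd the total is odd, a contradiction. This argument uses only that $c$ is odd, so it rules out \emph{any} vertex-disjoint odd cycle; consequently the entire force of the musquash hypothesis is concentrated in the configurations where $C'$ meets $c$ in at least one vertex, and these I would treat next.

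When $C'$ meets $c$ in a vertex but in no edge, the curves are no longer in general position, and the mod-$2$ intersection number picks up a local contribution at each shared vertex $v$, equal to $1$ exactly when the two germs of $C'$ at $v$ are separated, in the cyclic order of the four edge-ends around $v$, by the two germs of $c$. Combined with the parity lemma, vanishing of the total mod-$2$ intersection forces an \emph{odd} number of the shared vertices to be of this ``crossing'' type. This is satisfiable, so parity no longer suffices and the musquash structure must enter. I would organise the rest by the number of shared vertices (for the triangle, $1$, $2$ or $3$; for the pentagon, $1$ through $5$, with the shared-edge subcases absorbed here since they share two vertices), and in each case use two ingredients: the explicit cell structure and winding numbers of the star-polygon drawing $\{n/\frac{n-1}{2}\}$ realising the musquash, which pin down the faces in which the non-musquash vertices of $C'$ may sit; and the ordering of crossings along an edge imposed by the musquash condition, which severely restricts how an arc joining two prescribed musquash vertices can meet all $n$ musquash edges exactly once. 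Together these reduce each case to a finite, explicit check that no admissible routing of the edges of $C'$ closes up into a thrackled cycle.

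The main obstacle is the shared-vertex analysis for the pentagon. There $C'$ has five edges, several of which may join two musquash vertices by long arcs, and the possibilities for the cyclic orders at the shared vertices and for the faces occupied by the free vertices proliferate. The crux is a precise description of the admissible arcs between two musquash vertices---essentially showing that the star-polygon geometry forces any such arc to spiral in parallel with the musquash edges, so that its endpoints and its sequence of crossings are almost completely determined---after which the impossibility of five such arcs forming a closed cycle can be read off. I expect this scheme to degrade for longer odd cycles precisely because the finite casework grows rapidly, which is why the theorem is stated only for lengths three and five.
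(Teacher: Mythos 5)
Your opening parity argument is sound and matches the paper's first step: the disjoint case is ruled out exactly as you say (this is \cite[Lemma~2.1(ii)]{LPS97} in the paper), and the observation that an odd number of shared vertices must be of crossing type is also how the paper begins. From that point on, however, what you have written is a plan rather than a proof, and the plan has a concrete quantitative gap: you propose to finish by ``a finite, explicit check'' over the faces occupied by the free vertices of $C'$ and the admissible routings of its edges, but the musquash has arbitrary odd length $n$, so the number of faces, the possible cyclic orders at shared vertices, and the candidate routings all grow with $n$. Nothing in your sketch makes the casework finite. The paper needs two substantial pieces of machinery to achieve exactly this: (i) a uniform-in-$n$ statement (its Lemmas~\ref{lemma:1edge} and~\ref{lemma:2path}) that any edge of $C'$ crossing all of $\T(c)$ moves between faces whose rotation numbers differ by exactly one, and that a two-path attached at a vertex and starting outside must \emph{return} to the outer face --- the proof of which occupies several pages and a table of crossing vectors; and (ii) the edge-removal operation, which shrinks the standard $n$-musquash to a standard $(n-2)$-musquash while preserving the thrackle property and the face labels of the extra vertices, and which is what actually reduces the problem to the genuinely finite terminal cases ($n=3,5,7$ and the two-five-cycle figure-eight). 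Your ``crux'' --- that the star-polygon geometry forces attached arcs to spiral so that their crossing sequences are almost determined --- is precisely assertion (i), and you have not proved it; it is the hardest part of the paper.

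A second, smaller gap: you organise the residual cases by the number of shared vertices ($1$ through $5$ for the pentagon), but you give no tool for eliminating the configurations with $3$ or $5$ crossing-type shared vertices. The paper disposes of these by vertex-splitting and perturbation (to turn shared paths and touching vertices into crossings) followed by an appeal to \cite[Theorem~3]{GY2012}, which bounds the number of crossing vertices for a cycle of length at most $5$; after that only the figure-eight configuration with a single shared vertex survives. Without some such reduction your case tree is considerably larger than you suggest, and each branch still suffers from the unboundedness in $n$ described above. In short: the parity skeleton is right and genuinely overlaps with the paper, but the two load-bearing lemmas (the face-label increment lemma and the musquash-preserving edge removal) are absent, and without them the argument does not close.
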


Note that a thrackled graph can never contain a 4-cycle \cite{LPS97}. So the theorem may be rephrased as follows: if there is a counter-example to Conway's thrackle conjecture that is a figure-eight graph comprised of a standard $n$-musquash and an $m$-cycle sharing a common vertex, then $m$ is at least 6.

\medskip

In this paper, we will consider thrackles up to isotopy, which is to be understood as follows. We regard graph drawing as being drawn on the 2-sphere $S^2$. Then  two drawings $\T_1(G),\, \T_2(G)$ of a graph $G$ are \emph{isotopic} if there is a homeomorphism $h$ of $S^2$ with $\T_2(G)=h(T_1(G))$. Hence an isotopy amounts to a continuous deformation in the plane, combined eventually with an inversion. This notion is more convenient than simple planar deformation as it allows statements such as the following: up to isotopy, the only thrackle drawing of the $5$-cycle is the standard $5$-musquash.

\section{Proof of the Theorem} 
\label{section:th}

In this section we give the proof of the Theorem assuming some technical lemmas that we establish later in Section~\ref{section:lemmas}.

Suppose that in a thrackle drawing $\T(G)$ of a graph $G$, a cycle $c$ is thrackled as a standard musquash. Then $n:=l(c)$ is odd, where $l(c)$ denotes the length of $c$. Suppose that the graph $G$ contains another cycle $c'$ with $l(c') = 3$ or $l(c') = 5$. For convenience, we can remove from $G$ all the other edges and vertices; that is, we may assume that $G = c \cup c'$.

\medskip

The first step is to reduce the proof to the case when $G$ is a figure-eight graph consisting of cycles $c$ and $c'$, of the same lengths as before, sharing a common vertex and such that $\T(c)$ is still a standard musquash. As both $c$ and $c'$ are odd, they cannot be disjoint in $G$ \cite[Lemma~2.1(ii)]{LPS97}, and so $c \cap c'$ is a nonempty union of vertices and paths. Repeatedly using the vertex-splitting operation shown in Figure~\ref{figure:6}(a), we obtain a new thrackle drawing such that $c\cap c'$ is a union of vertices, without changing the lengths of the cycles $c, c'$ and without violating the fact that $c$ is thrackled as a standard musquash. Next, we can perturb the drawing in a neighbourhood of every vertex at which $\T(c)$ and $\T(c')$ meet without crossing, to replace a vertex of degree four by four crossings (Figure~\ref{figure:6}(b)).

\begin{figure}[h]
\centering
\begin{minipage}[b]{6cm}
\begin{tikzpicture}[scale=0.9,>=triangle 45]
\draw[thick] (-0.7,3.42)-- (-0.7,0.56);
\draw[thick] (-0.7,3.42)-- (-1.32,2.9);
\draw[thick] (-0.7,3.42)-- (-0.14,2.98); 
\draw[thick] (-1.2,2.4)-- (-0.2,2.4);
\draw[thick] (-1.2,1.95)-- (-0.2,1.95);
\draw[thick] (-1.2,1.5)-- (-0.2,1.5); 
\draw[->, very thick] (0.73,2.1)-- (2.43,2.1); 
\draw[thick] (3.54,3.54)-- (4,0.56);
\draw[thick] (4,0.56)-- (4.36,3.58);
\draw[thick] (3.54,3.54)-- (4.76,2.72);
\draw[thick] (4.36,3.58)-- (3.32,2.7); 
\draw[thick] (3.36,2.4)-- (4.72,2.4);
\draw[thick] (3.36,1.95)-- (4.72,1.95);
\draw[thick] (3.36,1.5)-- (4.72,1.5); 
\draw(1.53,0) node {(a)};
\fill  (-0.7,3.42) circle (1.8pt);
\fill  (-0.7,0.56) circle (1.8pt);
\fill  (3.54,3.54) circle (1.8pt);
\fill  (4,0.56) circle (1.8pt);
\fill  (4.36,3.58) circle (1.8pt);
\end{tikzpicture}
\end{minipage}
\hspace{0.3cm}
\begin{minipage}[b]{6cm}
\begin{tikzpicture}[scale=0.55,>=triangle 45]
\draw[thick] (0,0)--(2,2)--(0,4);
\draw[thick] (2,2) to [out=90,in=-180] (5,4);
\draw[thick] (5,0) to [out=180,in=-90] (2,2);
\fill  (2,2) circle (3.0pt);
\draw[->, very thick] (5.36,1.9)-- (8.14,1.9); 
\draw[thick] (8.5,0)--(10.5,2)--(8.5,4);
\fill  (10.5,2) circle (3pt);
\coordinate (A) at (10,3); \fill (A) circle (3pt);
\draw[thick] (13.5,0) to [out=180,in=-130] (A);
\draw[thick] (A) to [out=-90,in=150] (10.5,1.5);
\draw[thick] (10.5,1.5) to [out=-10,in=-180] (13.5,4);
\draw(0.25,0.8) node {$c$};
\draw(4.65,0.7) node {$c'$};
\draw(8.75,0.8) node {$c$};
\draw(13.15,0.7) node {$c'$};
\draw(6.5,-1.5) node {(b)};
\end{tikzpicture}
\end{minipage}
\caption{Vertex splitting (a) and replacing a vertex of degree four by four crossings (b).} 
\label{figure:6}
\end{figure}
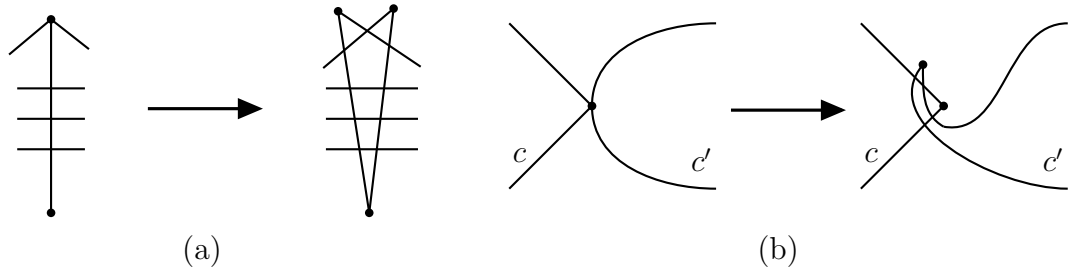

Now $c\cap c'$ is a set of vertices at each of which the drawings of the cycles $c, c'$ cross. Counting the number of crossings of the closed curves $\T(c)$ and $\T(c')$ we find that the number of such vertices must be odd \cite{LPS97}. If there are at least three of them, then since $l(c') \leq 5$, we arrive at a contradiction with \cite[Theorem~3]{GY2012}. So $c \cap c'$ is a single vertex, hence $G=c \cup c'$ is a figure-eight graph, with common vertex $v$ say.

\medskip

In the second step, we fix a positive orientation on $\T(c)$, and to every domain $D$ of the complement $\R^2\setminus \T(c)$, assign a non-negative integer label, the rotation number of the closed oriented curve $\T(c)$ around a point of $D$. The labels change from $0$ for the outer domain to $\frac{1}{2}(n-1)$ for the innermost $n$-gonal domain (Figure~\ref{figure:1}).

\begin{figure}[h]
\begin{tikzpicture}[scale=0.8]
\centering
\foreach \x in {0,1,...,6} {
\draw ({1.1*cos((2*pi*\x/7+pi/2+pi/7) r)},{1.1*sin((2*pi*\x/7+pi/2+pi/7) r)}) node {$2$};
\draw ({2*cos((2*pi*\x/7+pi/2) r)},{2*sin((2*pi*\x/7+pi/2) r)}) node {$1$};
\fill  ({4*cos((2*pi*\x/7+pi/2) r)},{4*sin((2*pi*\x/7+pi/2) r)}) circle (2.5pt);
\draw[thick] ({4*cos((2*pi*\x/7+pi/2) r)},{4*sin((2*pi*\x/7+pi/2) r)}) -- ({4*cos((2*pi*(\x+3)/7+pi/2) r)},{4*sin((2*pi*(\x+3)/7+pi/2) r)});
}
\draw (0,0) node {$3$};
\draw (2.5,3.2) node {$0$};
\end{tikzpicture}
\caption{Labels (rotation numbers) for a standard $7$-musquash.}
\label{figure:1}
\end{figure}
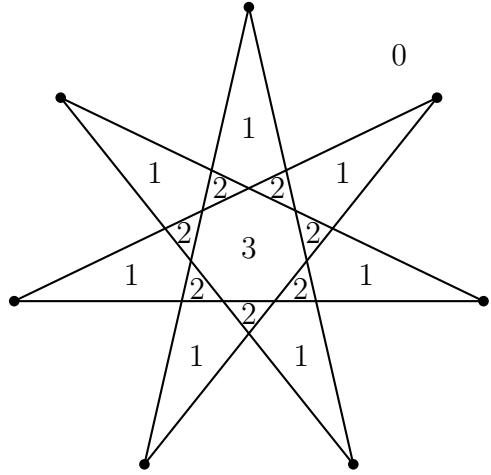

We need the following two lemmas in which we consider the mutual position of a standard musquash and an edge or a two-path attached to it.
\begin{lemma}
\label{lemma:1edge}
The endpoints of a curve crossing all the edges of a standard musquash exactly once and not passing through the vertices (so that the resulting drawing is a thrackle) lie in domains whose labels differ by one.
\end{lemma}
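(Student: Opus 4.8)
The plan is to read the label of a domain as the rotation number of the oriented closed curve $\T(c)$ about a point of that domain, and to track how this number changes as one travels along the crossing curve $\gamma$ from one endpoint $A$ to the other endpoint $B$. First I would record the local behaviour of the rotation number: it is constant on each domain and jumps by exactly $\pm 1$ whenever $\gamma$ transversally crosses an edge of $\T(c)$, the sign being determined by the orientation of that edge relative to $\gamma$. Writing $\ell_A,\ell_B$ for the labels of the domains containing $A$ and $B$, this gives
\[
\ell_B-\ell_A=\sum_{i=0}^{n-1}\varepsilon_i,\qquad \varepsilon_i\in\{+1,-1\},
\]
one term for each of the $n$ edges, since $\gamma$ meets every edge exactly once. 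As $n=l(c)$ is odd, the right-hand side is a sum of an odd number of $\pm 1$'s, hence odd; in particular it is nonzero, so $A$ and $B$ lie in distinct domains whose labels differ by an odd number. This settles the parity half of the statement.

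The harder half is to sharpen ``odd'' to ``exactly one''. Here I would exploit the explicit geometry of the standard musquash through its super-level sets $U_m=\{z:\ w(z)\ge m\}$ for $0\le m\le k$, where $k=\tfrac12(n-1)$ is the maximal label. Each $U_m$ is a closed topological disk, nested as $U_0\supset U_1\supset\cdots\supset U_k$, with Jordan boundary $\partial U_m$: the innermost $\partial U_k$ is the central $n$-gon, while each intermediate $\partial U_m$ ($1\le m<k$) is a $2n$-gon. The key combinatorial feature of the standard musquash is that its $n-3$ crossings cut each edge into $n-2$ arcs whose labels read $1,2,\dots,k-1,k,k-1,\dots,2,1$ along the edge; consequently each edge contributes exactly one side to $\partial U_k$ and exactly two sides to each $\partial U_m$ with $m<k$ (which accounts for all $n(n-2)$ arcs). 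Since $\gamma$ meets each edge once, every crossing of $\gamma$ with $\T(c)$ occurs at a well-defined level $m$; if $n_m$ denotes the number of edges crossed by $\gamma$ at level $m$, then $\sum_{m=1}^{k} n_m=n$ and $\gamma$ meets the Jordan curve $\partial U_m$ in exactly $n_m$ points. By the Jordan curve theorem $n_m$ is odd precisely when exactly one of $A,B$ lies in $U_m$, i.e. precisely for the $|\ell_B-\ell_A|$ consecutive values $m\in\bigl(\min(\ell_A,\ell_B),\max(\ell_A,\ell_B)\bigr]$.

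With this reduction in hand, the whole lemma becomes equivalent to the clean combinatorial target: \emph{at most one of the numbers $n_1,\dots,n_k$ is odd}. Indeed, combined with the parity characterisation this forces $|\ell_B-\ell_A|\le 1$, and together with the first paragraph it yields $|\ell_B-\ell_A|=1$. To establish that at most one $n_m$ is odd I would bring in the fact that $\gamma$ is a simple arc: its sub-arcs lying inside any disk $U_m$ are pairwise disjoint simple arcs, each entering and leaving through distinct sides of $\partial U_m$, while the two sides of $\partial U_m$ arising from a single edge can never both be used, as $\gamma$ meets that edge only once. The idea is to analyse the deepest attained level and propagate outward through the nested disks $U_k\subset U_{k-1}\subset\cdots$, showing that every passage into and out of $U_m$ must be paired off (an entry with a matching exit) except at the single extreme level separating $A$ from $B$, so that the intermediate counts are forced to be even.

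I expect this last step to be the main obstacle, since the bare parity count of the second paragraph is by itself consistent with three or more consecutive $n_m$ being odd; it is exactly the simplicity of $\gamma$, together with the rigid cyclic intersection pattern of the standard musquash, that must rule this out. The delicate point is to control the interleaving of the disjoint sub-arcs of $\gamma$ across consecutive levels using only planarity and the ``each edge once'' condition, and I anticipate that handling this interleaving—rather than the winding-number bookkeeping or the Jordan curve applications, which are routine—will carry the real weight of the proof.
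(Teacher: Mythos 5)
Your first paragraph is correct: the rotation number jumps by $\pm1$ at each of the $n$ transversal crossings, so the two labels differ by an odd number. Your reformulation of the remaining half is also essentially sound. Writing $n_m$ for the number of edges that $\gamma$ crosses along an arc separating a label-$(m-1)$ domain from a label-$m$ domain, nestedness of the super-level sets gives that $n_m$ is odd precisely when $\min(\ell_A,\ell_B)<m\le\max(\ell_A,\ell_B)$, so the number of odd $n_m$ equals $|\ell_B-\ell_A|$ and the lemma is equivalent to: \emph{at most one of $n_1,\dots,n_k$ is odd}. (A small repair: for $2\le m\le k-1$ the set $U_m$ is not a disk with Jordan boundary --- two adjacent label-$m$ domains of the standard musquash meet only at a crossing point, so $\partial U_m$ touches itself there --- but since $\gamma$ meets $\T(c)$ only at interior points of edge-arcs, the parity count ``$n_m$ odd iff exactly one endpoint lies in $U_m$'' survives.)

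The genuine gap is that this combinatorial target is exactly the hard content of the lemma, and your last two paragraphs do not prove it --- as you yourself acknowledge. The pairing mechanism you sketch (each entry into $U_m$ matched with an exit, except at ``the single extreme level'') is nothing more than the Jordan parity count you have already used; applied honestly it says that $n_m$ is odd for \emph{every} $m$ strictly between the two labels and up to the larger one, and so it is perfectly consistent with, say, $\ell_A=0$, $\ell_B=3$ and $n_1,n_2,n_3$ all odd. Nothing in planarity, simplicity of $\gamma$, or the nesting of the $U_m$ rules this out; what does rule it out is the specific cyclic pattern in which the edges of a standard musquash can be met, and extracting that information is where the paper's proof spends all of its effort. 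The paper cuts $\gamma$ along the circumscribed circle, replaces the interior arcs by chords, notes that a chord with both endpoints on the circle crosses an even path of consecutive edges, and then for the two terminal segments explicitly computes (Lemma~\ref{lemma:o1o0} and Table~\ref{table:VO}) the numbers $O_1,O_0$ of odd paths in the crossed edge-set and in its complement, obtaining the inequalities $s_1\ge s_2-1$ and $s_2\ge s_1-1$. Some such explicit analysis of \emph{which} edges a terminal segment from a label-$s$ domain can cross --- not merely how many --- is unavoidable, and your proposal defers precisely that step.
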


Note that for the purposes of the proof of the Theorem, we will not need Lemma~\ref{lemma:1edge} in its full generality; we will only require special cases. The proof of Lemma~\ref{lemma:1edge} is quite involved, and the proof of the conclusions we require could be simpler. On the other hand, Lemma~\ref{lemma:1edge} may be of a certain independent interest and may be useful for the further study of thrackles containing musquashes. We also observe that Lemma~\ref{lemma:1edge} generalises Theorem~3 of \cite{GY2012}.

Now consider a graph $G'$ consisting of an odd cycle $c$ and a two-path $p = vuw$ attached to a vertex $v \in c$. Suppose that $G'$ is thrackled in such a way that $\T(c)$ is a standard musquash and that the starting segment of $\T(vu)$ lies in the outer domain of $\T(c)$. A possible thrackle drawing is shown in Figure~\ref{figure:3}. Slightly perturbing the drawing of the edge $vu$ in a neighbourhood of $v$ we obtain a thrackle drawing of the disjoint union of $c$ and $p$, with $\T(c)$ still being the standard musquash, and the starting point of $\T(p)$ lying in the outer domain (so that $v$ is no longer a vertex of $c$). Then by Lemma~\ref{lemma:1edge}, the point $u$ lies in a domain labelled $1$, and then $w$, the other endpoint of $p$, lies in a domain with the label either $0$ or $2$. The following lemma, which will be crucial for the proof of the Theorem for $l(c')=5$, states that the second case cannot occur.

\begin{lemma}
\label{lemma:2path}
Let a graph $G'$ consist of an odd cycle $c$ and a $2$-path $p = vuw$ attached to a vertex $v \in c$. Let $\T(G')$ be a thrackle drawing, with $\T(c)$ a standard musquash, such that the starting segment of $\T(vu)$ lies in the outer domain of $\T(c)$. Then $u$ lies in the domain labelled $1$ and $w$ lies in the outer domain of $\T(c)$.
\end{lemma}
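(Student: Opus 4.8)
The plan is to deduce both statements from Lemma~\ref{lemma:1edge}, the only genuine work being the exclusion of the possibility that $w$ lands in a domain labelled $2$. First I would carry out the perturbation described just before the statement, so that I may work with a thrackle drawing of the \emph{disjoint} union $c \sqcup p$ in which $\T(c)$ is the standard musquash, the endpoint $v$ of $p$ lies in the outer domain, and each of the two edges $\T(vu)$ and $\T(uw)$ meets every edge of $\T(c)$ in exactly one point (note that the two $c$-edges originally incident to $v$ are now crossed rather than shared). Applying Lemma~\ref{lemma:1edge} to the arc $\T(vu)$, whose initial endpoint $v$ has label $0$, forces $u$ into a domain whose label differs from $0$ by one; as labels are non-negative, $u$ has label $1$. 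Applying Lemma~\ref{lemma:1edge} a second time to $\T(uw)$, now with initial endpoint $u$ of label $1$, shows that $w$ lies in a domain labelled $0$ or $2$, and the outer domain is exactly the label-$0$ case.

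It therefore remains to rule out label $2$, and here I would argue by contradiction. The feature to exploit is that, by the thrackle condition, $\T(vu)$ and $\T(uw)$ meet only at $u$, so their union is a \emph{simple} arc $q$ from $v$ to $w$ crossing every edge of the musquash exactly twice. I would fix the positive orientation of $\T(c)$ and record, for each edge $e_i$, whether each of the two transversal crossings of $q$ with $e_i$ raises or lowers the rotation number. Tracking these signs along $q$ turns the hypotheses ``$v$ has label $0$, $u$ has label $1$, $w$ has label $2$'' into precise counts of ascending versus descending crossings on each half: the first half $\T(vu)$ has net change $+1$, hence $(n+1)/2$ ascents, and in the forbidden case the second half $\T(uw)$ also has net change $+1$, hence again $(n+1)/2$ ascents (whereas terminating in the outer domain would give it only $(n-1)/2$).

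With this bookkeeping in place, the strategy is to show that the label-$2$ terminus is incompatible with the thrackle condition on the pair $\{vu,uw\}$. The arc $\T(vu)$ together with $\T(c)$ cuts the sphere into regions, and I would track in which of these $\T(uw)$ is forced to travel as it moves from $u$ toward a label-$2$ domain while meeting each $e_i$ exactly once. The aim is to show that reaching label $2$ compels $\T(uw)$ either to cross $\T(vu)$ at a point other than $u$, or to meet some edge $e_i$ a second time -- either of which violates the requirement that the relevant pair of edges meet exactly once -- while terminating in the outer domain can be realised without any such forced extra intersection. Locating the offending intersection should be achievable by an innermost-disc argument applied to the sub-arcs of $q$ cut off between consecutive crossings.

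The step I expect to be the main obstacle is precisely this production of a forced extra (or missing) intersection. The curve $\T(c)$ is far from simple, so its complementary regions are nested in the intricate musquash pattern, and, as the counts above make plain, a naive rotation-number count does \emph{not} by itself separate label $0$ from label $2$: both are compatible with the crossing parities. Overcoming this will require the finer combinatorial structure of the standard musquash -- the cyclic ``musquash order'' in which consecutive edges cross -- rather than rotation numbers alone, essentially re-deploying the machinery behind Lemma~\ref{lemma:1edge} while keeping track of the interaction between the two edges $\T(vu)$ and $\T(uw)$ emanating from $u$.
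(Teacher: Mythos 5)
Your first paragraph matches the paper exactly: both applications of Lemma~\ref{lemma:1edge} (after the perturbation at $v$) give that $u$ has label $1$ and $w$ has label $0$ or $2$, and the entire content of the lemma is the exclusion of label $2$. But that exclusion is precisely where your proposal stops being a proof. You correctly observe that rotation-number bookkeeping cannot separate label $0$ from label $2$ -- both are compatible with all the crossing parities -- and you then defer the actual argument to an unspecified ``innermost-disc argument'' that is supposed to produce a forced extra intersection, while explicitly flagging this as the main obstacle and conceding it would need ``the finer combinatorial structure of the standard musquash.'' No mechanism is given for how the nested regions of the musquash, together with the constraints that $\T(vu)$ and $\T(uw)$ meet only at $u$ and that each meets every $e_i$ once, actually force a contradiction. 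That is the whole difficulty, and it is left open; as written the proposal establishes only what already follows from Lemma~\ref{lemma:1edge}.

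The paper's route around this difficulty is quite different and worth internalising: it is a \emph{reduction to finitely many small cases} via edge removal. If $w$ had label $2$, then by Lemma~\ref{lemma:triangle} the edges of $c$ barred from removal are the two incident to $v$, the two whose triangular domains contain $u$ (a label-$1$ point lies in exactly two such domains), and the four whose triangular domains contain $w$ (a label-$2$ point lies in exactly four) -- at most eight in all. Hence for $n\ge 9$ some edge can be removed, and by Lemma~\ref{lemma:mtom} the result is again a \emph{standard} musquash with $u,w$ still in domains labelled $1$ and $2$. Iterating reduces everything to the $5$- and $7$-musquash, which are then killed by the exhaustive diagram analysis of Lemma~\ref{lemma:2pathon5or7}. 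If you want to salvage your direct topological approach, you would need to supply the combinatorial engine you allude to (something playing the role of Lemma~\ref{lemma:o1o0} but for the \emph{pair} of arcs sharing the endpoint $u$); absent that, the edge-removal reduction is the argument.
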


\begin{figure}[h]
\begin{tikzpicture}[scale=0.75]
\centering
\draw[thick] (7.01,2.18)-- (5.78,-4.22);
\draw[thick] (5.78,-4.22)-- (9.67,1.01);
\draw[thick] (9.67,1.01)-- (3.89,-2.02);
\draw[thick] (3.89,-2.02)-- (10.41,-1.79);
\draw[thick] (10.41,-1.79)-- (4.44,0.83);
\draw[thick] (4.44,0.83)-- (8.68,-4.12);
\draw[thick] (8.68,-4.12)-- (7.01,2.18);
\fill  (5.78,-4.22) circle (2.5pt);
\draw[thick](5.94,-3.96) node {};
\fill  (8.68,-4.12) circle (2.5pt);
\draw[thick](8.84,-3.86) node {};
\fill  (10.41,-1.79) circle (2.5pt);
\draw[thick](10.56,-1.54) node {};
\fill  (9.67,1.01) circle (2.5pt);
\draw[thick](9.82,1.28) node {};
\fill  (7.01,2.18) circle (2.5pt);
\draw (7.18,2.44) node {$v$};
\fill  (4.44,0.83) circle (2.5pt);
\fill  (3.89,-2.02) circle (2.5pt);
\draw[thick] (4.06,-1.76) node {};
\draw[thick] (7.01,2.18) to [out=-45,in=-90] (10,0.5);
\draw[thick] (10,0.5) to[out= 90,in=10] (7.18,3);
\draw[thick] (7.18,3) to[out=195,in=-150] (4.5,-3);
\draw[thick] (4.5,-3) to[out=30,in=-90] (5,-1.76);
\fill  (5,-1.76) circle (2.5pt);
\draw (5.2,-1.65) node {$u$};
\draw[thick] (5,-1.76) to[out=-70,in=-180] (6,-5);
\draw[thick] (6,-5) to[out=0,in=-90] (11,-1.54);
\draw[thick] (11,-1.54) to[out=90,in=-20] (4.5,-1);
\fill  (4.5,-1) circle (2.5pt);
\draw (4.67,-0.74) node {$w$};
\end{tikzpicture}
\caption{A standard $7$-musquash with a $2$-path attached.}
\label{figure:3}
\end{figure}
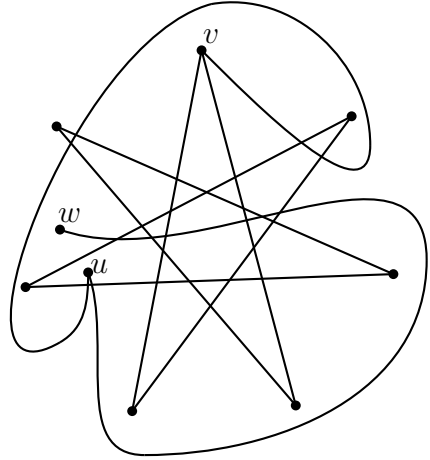

\begin{remark}\label{rem:paths}
Combining Lemma~\ref{lemma:1edge} and Lemma~\ref{lemma:2path} one can generalise Lemma~\ref{lemma:2path} to the case when $p$ is a three-path: if $p=vv_1v_2v_3$, then $v_1$ and $v_3$ lie in the domain labelled $1$, and $v_2$, in the outer domain. It would be very interesting to know, if the direct generalisation of this fact for longer paths $p$ is still true: is it so that a path attached to a vertex of a standard musquash and starting at the outer domain cannot get ``too deep" in the musquash (most optimistically, if it always ends in a domain labelled either $0$ or $1$)?
\end{remark}

Returning to the proof of the Theorem, we have a figure-eight graph $G$ consisting of an odd cycle $c$ and an odd cycle $c'$ of length $3$ or $5$ that share a common vertex $v$, and a thrackle drawing $\T(G)$ such that $\T(c)$ is a standard musquash. From the above argument, or by \cite[Lemma~2.2]{LPS97}, the drawings of $c$ and $c'$  cross at their common vertex $v$. Of the two edges of $c'$ incident to $v$, one goes to the inner domain of the musquash $\T(c)$ (its starting segment lies in the domain labelled $1$) and then ends in the outer domain. The other edge goes to the outer domain. Hence by Lemma~\ref{lemma:1edge} and Lemma~\ref{lemma:2path}, because $c'$ has length $3$ or $5$, we obtain the following key fact: \emph{all the vertices of $c'$ other than $v$ lie in  domains labelled $0$ or $1$}. 

\medskip

The third step in the proof of the Theorem is the operation of \emph{edge removal} \cite[Section~4]{GY2012}, which will enable us to eventually shorten $c$ to a thrackled cycle of the same length as $c'$. 

The operation of edge removal is inverse to Woodall's edge insertion operation \cite[Figure~14]{WOO71}. Let $\T(H)$ be a thrackle drawing of a graph $H$ and let $v_1v_2v_3v_4$ be a three-path in $H$ such that $\deg v_2 = \deg v_3 = 2$. Let $A = \T (v_1v_2) \cap \T (v_3v_4)$. Removing the drawing of the edge $v_2v_3$, together with the segments $Av_2$ and $Av_3$ of $\T (v_1v_2)$ and $\T (v_3v_4)$ from the point $A$ to $\T(v_2)$ and $\T(v_3)$, respectively, we obtain a drawing of a graph with a single edge $v_1v_4$ in place of the three-path $v_1v_2v_3v_4$ (Figure~\ref{figure:2}). (In what follows, to make the notation less cumbersome, we will use the vertex name $v_i$ to denote the point $\T(v_i)$, when there is no risk of confusion).

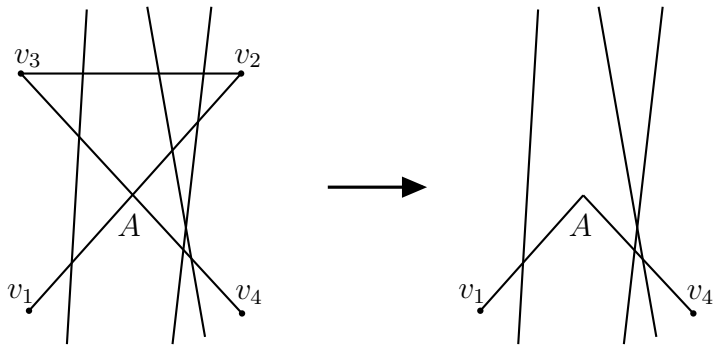
\begin{figure}[h]
\begin{tikzpicture}[scale=0.6,>=triangle 45]
\draw[thick] (-1,3.66)-- (3.88,3.66); 
\draw[thick] (3.88,3.66)-- (-0.82,-1.6); 
\draw[thick] (-1,3.66)-- (3.9,-1.66); 
\draw[thick] (0.46,5.08)-- (0.02,-2.34); 
\draw[thick] (1.8,5.14)-- (3.08,-2.18); 
\draw[thick] (3.22,5.14)-- (2.36,-2.34); 
\draw[->, very thick] (5.8,1.14) -- (8,1.14);
\draw[thick] (9.18,-1.6) -- (11.46,0.96); 
\draw[thick] (11.46,0.96) -- (13.9,-1.66); 
\draw[thick] (10.46,5.08)-- (10.02,-2.34); 
\draw[thick] (11.8,5.14)-- (13.08,-2.18); 
\draw[thick] (13.22,5.14)-- (12.36,-2.34); 
\fill  (-1,3.66) circle (2.0pt); \draw (-0.84,4) node {$v_3$}; 
\fill  (3.88,3.66) circle (2.0pt); \draw (4.04,4) node {$v_2$}; 
\fill  (-0.82,-1.6) circle (2.0pt); \draw (-1,-1.22) node {$v_1$}; 
\fill  (3.9,-1.66) circle (2.0pt); \draw (4.06,-1.24) node {$v_4$}; 
\draw (1.4,0.3) node {$A$};  
\fill  (9.18,-1.6) circle (2.0pt); \draw (9,-1.22) node {$v_1$}; 
\fill  (13.9,-1.66) circle (2.0pt); \draw (14.06,-1.24) node {$v_4$}; 
\draw(11.4,0.3) node {$A$};
\end{tikzpicture}
\caption{The edge removal operation.}
\label{figure:2}
\end{figure}

Unlike edge insertion, edge removal does not necessarily result in a thrackle drawing. Consider the triangular domain $\bigtriangleup$ bounded by the arcs $\T(v_2v_3)$, $Av_2$ and $v_3A$. We have the following Lemma.
\begin{lemma}[{\cite[Lemma~3]{GY2012}}]
\label{lemma:triangle}
Edge removal results in a thrackle drawing if and only if $\bigtriangleup$ contains no vertices of $\T(G)$.
\end{lemma}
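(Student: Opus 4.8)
The plan is to track, for every edge $e$ of $\T(H)$ other than the three path edges, how many times $e$ meets the new edge $v_1v_4$, and to show that this number equals $1$ for all such $e$ precisely when $\bigtriangleup$ is free of vertices. Since the drawings of all unchanged edges are untouched, the only meetings that can be created or destroyed are those with $v_1v_4$, so this is exactly the condition for the new drawing to be a thrackle. I first record two structural facts about the new edge. The subarcs $v_1A\subset\T(v_1v_2)$ and $Av_4\subset\T(v_3v_4)$ meet only at $A$ (as $v_1v_2$ and $v_3v_4$ share no vertex, they cross exactly once, at $A$), so $v_1v_4=v_1A\cup Av_4$ is a simple arc; moreover, at $A$ it occupies the vertical angle opposite to $\bigtriangleup$, and the kept subarcs lie on the far side of the boundary arcs $Av_2$ and $Av_3$, so $v_1v_4$ lies entirely outside the open region $\bigtriangleup$.

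The heart of the argument is a parity count. Write $\partial\bigtriangleup=\T(v_2v_3)\cup Av_2\cup Av_3$. For an edge $e$ not incident to $v_2$ or $v_3$ (recall $\deg v_2=\deg v_3=2$, so the only such edges are the path edges themselves), the thrackle hypothesis forces $e$ to meet each of $\T(v_1v_2)$, $\T(v_2v_3)$, $\T(v_3v_4)$ exactly once. Consequently $e$ crosses $\T(v_2v_3)$ once, crosses $Av_2$ in $0$ or $1$ points according as its single crossing with $\T(v_1v_2)$ falls on the kept part $v_1A$ or the removed part $Av_2$, and similarly for $Av_3$. Writing $a,b\in\{0,1\}$ for the indicators that these crossings fall on the kept parts $v_1A$, $Av_4$ respectively, the number of meetings of $e$ with $v_1v_4$ is exactly $a+b$, while the number of crossings of $e$ with $\partial\bigtriangleup$ is $3-a-b$. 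By the Jordan curve theorem, the latter has the same parity as the number of endpoints of $e$ lying inside $\bigtriangleup$. Hence $a+b$ is odd --- that is, $e$ meets $v_1v_4$ exactly once --- if and only if $e$ has an even number (namely $0$ or $2$) of endpoints inside $\bigtriangleup$.

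From here both directions follow. If $\bigtriangleup$ contains no vertices, every edge has both endpoints outside, so $a+b=1$ for all such $e$ and the new drawing is a thrackle; the edges incident to $v_1$ or $v_4$ require only a small adjustment of the count, the shared vertex playing the role of one of the kept-part intersections (for an edge at $v_1$ one effectively has $a=1$, and even parity then forces $b=0$, so the edge meets $v_1v_4$ only at $v_1$), which the same bookkeeping handles. Conversely, if $\bigtriangleup$ contains a vertex $w$, then since $H$ is connected and $v_1,v_4$ lie outside $\bigtriangleup$, some edge along a path from $w$ to $v_1$ has exactly one endpoint inside $\bigtriangleup$; for that edge $a+b\in\{0,2\}$, so it meets $v_1v_4$ either zero or two times and the drawing is not a thrackle.

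I expect the main obstacle to be the careful bookkeeping of the kept-versus-removed subarcs, especially for the edges sharing the vertex $v_1$ or $v_4$ with the new edge: there the single meeting at a common vertex must be correctly folded into the count $a+b$, and one must verify that $v_1v_4$ acquires neither a spurious self-crossing nor an extra crossing with such an edge. The only other delicate point is the reverse direction, where one must guarantee the existence of an edge straddling $\partial\bigtriangleup$; this is where the connectedness of the relevant part of $H$, together with the fact established above that $v_1v_4$ avoids $\bigtriangleup$, is used.
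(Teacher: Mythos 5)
Note first that the paper does not prove Lemma~\ref{lemma:triangle} at all: it is imported verbatim from \cite[Lemma~3]{GY2012}, so there is no internal proof to compare yours against. Your argument is the natural one (and, as far as I can tell, the same parity argument used in the cited source), and its core is sound: $\partial\bigtriangleup=\T(v_2v_3)\cup Av_2\cup Av_3$ is a simple closed curve because the three constituent arcs pairwise meet only at $v_2$, $v_3$, $A$; a non-path edge $e$ not incident to $v_1,v_4$ meets the new edge in $a+b$ points and $\partial\bigtriangleup$ in $3-a-b$ points; and the Jordan-curve parity count correctly converts ``$e$ meets $v_1v_4$ exactly once'' into ``$e$ has an even number of endpoints in $\bigtriangleup$''. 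Your bookkeeping for edges at $v_1$ or $v_4$ (setting $a=1$ for the meeting at the shared vertex, which contributes $0$ rather than $1-a$ to the boundary count) is also consistent.

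The one genuine weak point is the converse. What your parity count actually characterizes is ``no edge has exactly \emph{one} endpoint in $\bigtriangleup$'', and upgrading this to ``no vertex lies in $\bigtriangleup$'' requires a path from any vertex $w\in\bigtriangleup$ to a vertex outside $\bigtriangleup$ --- you invoke connectedness of $H$, which is not a hypothesis of the lemma. This is not a cosmetic omission: the biconditional as stated can genuinely fail without it. For example, one can thrackle the disjoint union of the three-path $v_1v_2v_3v_4$ and a single edge $uu'$ with both $u$ and $u'$ inside $\bigtriangleup$ (the edge $uu'$ exits through $Av_3$, crosses $v_1A$ outside the triangle, and re-enters through $\T(v_2v_3)$, so $a+b=1$); edge removal then still yields a thrackle even though $\bigtriangleup$ contains vertices. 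So you should either state connectedness (or the weaker condition that every vertex in $\bigtriangleup$ is joined by a path to one outside) as a standing hypothesis, or observe that in every application made in this paper --- a musquash, a figure-eight graph, or a cycle with a path attached --- $H$ is connected, so the lemma holds in the form needed. With that caveat recorded, your proof is complete; the remaining delicate points you flag (simplicity of $v_1A\cup Av_4$, its avoidance of $\bigtriangleup$, general position at $A$) are all handled correctly.
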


It follows that edge removal is always possible on (every edge of) a standard musquash. The resulting thrackled cycle is outerplanar (all the vertices lie on the boundary of a single domain) and as such, is Reidemeister equivalent to a standard musquash by \cite[Theorem~1]{GY2012}. In fact, by the following lemma, it is even \emph{isotopic} to a  standard musquash (which will be important for the argument that follows) -- see Figure~\ref{figure:5}.

\begin{lemma}
\label{lemma:mtom}
The edge removal operation on a standard musquash of length $n\geq 5$ results in a standard $(n-2)$-musquash.
\end{lemma}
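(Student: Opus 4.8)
The plan is to compute the edge-removal explicitly on the standard musquash, identify the combinatorial type of the resulting cycle as the star polygon defining the standard $(n-2)$-musquash, and then upgrade the Reidemeister equivalence recorded just before the Lemma to a genuine isotopy by straightening the drawing inside the disk bounded by the outer circle.

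First I would fix coordinates: write $n=2s+1$ and place the physical vertices $0,1,\dots,n-1$ evenly on a circle, so that the musquash edges are exactly the chords joining vertices at physical distance $s$, and the cycle $c$ visits the vertices in the order $w_j:=js \bmod n$. I would perform edge removal on the three-path $w_0w_1w_2w_3$; by Lemma~\ref{lemma:triangle} this is legitimate on a musquash (the relevant triangle $\bigtriangleup$ contains no vertex), and it replaces the three-path by a single edge $w_0w_3$, deleting the two physical vertices $w_1=s$ and $w_2=n-1$ and producing a thrackled $(n-2)$-cycle whose $n-2$ vertices are precisely the original ones minus $w_1,w_2$, all still lying on the outer boundary.

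The combinatorial heart is the identity $w_{j+2}=w_j-1 \pmod n$ (which follows from $2s\equiv -1 \pmod n$). This gives $w_{2i}\equiv -i$ and $w_{2i+1}\equiv s-i$, so that, after deleting $w_1,w_2$ and relabelling the surviving vertices $0,\dots,n-3$ in their cyclic order on the outer boundary, the new cycle $w_0,w_3,w_4,\dots,w_{n-1}$ becomes exactly the star polygon joining each position to the one $s-1=\tfrac{(n-2)-1}{2}$ steps away. A short check that the successive position-differences all equal $s-1 \bmod (n-2)$ confirms this. This is precisely the combinatorial pattern of the standard $(n-2)$-musquash, so the straight-line drawing on evenly spaced vertices realising this cycle \emph{is} the standard $(n-2)$-musquash.

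It remains to upgrade this to an isotopy. Since the resulting drawing is outerplanar, it lies in the closed disk bounded by the outer circle with all $n-2$ vertices on the boundary in the cyclic order just computed; the unchanged musquash edges are already straight chords, while the single new edge $w_0w_3$ is a simple arc with endpoints on the boundary meeting every other edge exactly once (the drawing is a thrackle). The plan is to show that such a simple arc system in a disk, with boundary endpoints in convex position and with the same pairwise-crossing pattern as the straight-chord diagram, is ambient isotopic rel the boundary circle to that straight-chord diagram; sliding the vertices to evenly spaced positions then exhibits the isotopy to the standard $(n-2)$-musquash. \textbf{The main obstacle} is exactly this last step, since \cite{GY2012} yields only Reidemeister equivalence and the point is to exclude the Reidemeister moves that alter the isotopy type. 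I expect it to go through because every edge has both of its endpoints on the outer boundary, so each edge is isotopic rel endpoints to its chord within the disk, and the already-matching crossing pattern lets these straightenings be performed simultaneously by a single ambient isotopy of the disk (equivalently of $S^2$), creating or destroying no crossing.
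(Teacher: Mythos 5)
Your combinatorial identification of the new cycle is correct and is a pleasant complement to the paper's argument: the identity $w_{j+2}=w_j-1$ does show that after deleting $w_1=s$ and $w_2=2s=n-1$ and relabelling the surviving boundary vertices, the abstract cycle is the star polygon with step $s-1=\tfrac{(n-2)-1}{2}$. But this only identifies \emph{which} pairs of boundary points are joined; it does not yet identify the drawing.

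The gap is in the final step, and it is precisely the step the Lemma exists to address. Knowing that every pair of edges of the new drawing meets exactly once, and that the straight-chord drawing of the same star polygon also has every pair meeting exactly once, does \emph{not} imply the two drawings are ambient isotopic rel the boundary circle: a Reidemeister move of the third kind preserves all pairwise crossing counts while changing the isotopy class, which is exactly why \cite[Theorem~1]{GY2012} delivers only Reidemeister equivalence. The isotopy class of an arc system in the disc with fixed boundary endpoints is governed by the \emph{order} of the crossings along each edge, not merely by which pairs cross. Concretely, only one edge of your drawing is bent, namely $w_0w_3=w_0A\cup Aw_3$; to straighten it rel the other chords without Reidemeister III moves you must check that the sequence of chords it crosses, read from $w_0$ through $A$ to $w_3$, coincides with the sequence crossed by the straight chord $w_0w_3$ (and likewise that on each old chord the surviving crossing with the new edge sits in the right position among the others). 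Your proposal asserts "the already-matching crossing pattern lets these straightenings be performed simultaneously," but the pattern that is known to match (pairwise incidence) is too weak, and the pattern that would suffice (the crossing orders) is never computed. This missing computation is exactly the content of the paper's proof, which verifies edge by edge that the crossing table of the new drawing equals that of the standard $(n-2)$-musquash given by \eqref{eq:crossing}, and then invokes the uniqueness theorem of \cite{GD2001} that the crossing table determines the musquash. To repair your argument you would either have to carry out that same order-of-crossings verification for the bent edge and its neighbours, or cite a statement strictly stronger than outerplanarity plus pairwise crossing counts.
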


\begin{figure}[h]
\begin{tikzpicture}[scale=0.8,>=triangle 45]
\centering
\foreach \x in {0,1,...,8} {
\fill  ({4*cos((2*pi*\x/9+pi/2) r)},{4*sin((2*pi*\x/9+pi/2) r)}) circle (2.5pt);
\draw[thick] ({4*cos((2*pi*\x/9+pi/2) r)},{4*sin((2*pi*\x/9+pi/2) r)}) -- ({4*cos((2*pi*(\x+4)/9+pi/2) r)},{4*sin((2*pi*(\x+4)/9+pi/2) r)});
}
\draw[->, very thick] (4.75,0)-- (6.25,0); 
\foreach \x in {-1,0,1,3,4,5,6}
\fill  ({10+4*cos((2*pi*\x/9+pi/2) r)},{4*sin((2*pi*\x/9+pi/2) r)}) circle (2.5pt);
\foreach \x in {-1,0,1,4,5,6}
\draw[thick] ({10+4*cos((2*pi*\x/9+pi/2) r)},{4*sin((2*pi*\x/9+pi/2) r)}) -- ({10+4*cos((2*pi*(\x+4)/9+pi/2) r)},{4*sin((2*pi*(\x+4)/9+pi/2) r)});
\coordinate (A) at ({4*cos((2*pi*2/9+pi/2) r)},{4*sin((2*pi*2/9+pi/2) r)});
\coordinate (B) at ({4*cos((-2*pi*2/9+pi/2) r)},{4*sin((-2*pi*2/9+pi/2) r)});
\coordinate (C) at (0,{4*sin((8*pi/9) r)/(cos((4*pi/9+pi/2) r)-cos((12*pi/9+pi/2) r))});
\draw [very thick] (A)--(B); \draw [very thick] (A)--(C); \draw [very thick] (C)--(B);
\coordinate (D) at ({10+4*cos((2*pi*3/9+pi/2) r)},{4*sin((2*pi*3/9+pi/2) r)});
\coordinate (E) at ({10+4*cos((-2*pi*3/9+pi/2) r)},{4*sin((-2*pi*3/9+pi/2) r)});
\coordinate (F) at (10,{4*sin((8*pi/9) r)/(cos((4*pi/9+pi/2) r)-cos((12*pi/9+pi/2) r))});
\draw [thick] (D)--(F); \draw [thick] (F)--(E);
\draw ({4*cos((2*pi*2/9+pi/2) r)+1.5},{4*sin((2*pi*2/9+pi/2) r)+0.3}) node {$e$};
\end{tikzpicture}
\caption{The edge removal operation on the edge $e$ of the standard $9$-musquash results in a standard $7$-musquash.}
\label{figure:5}
\end{figure}
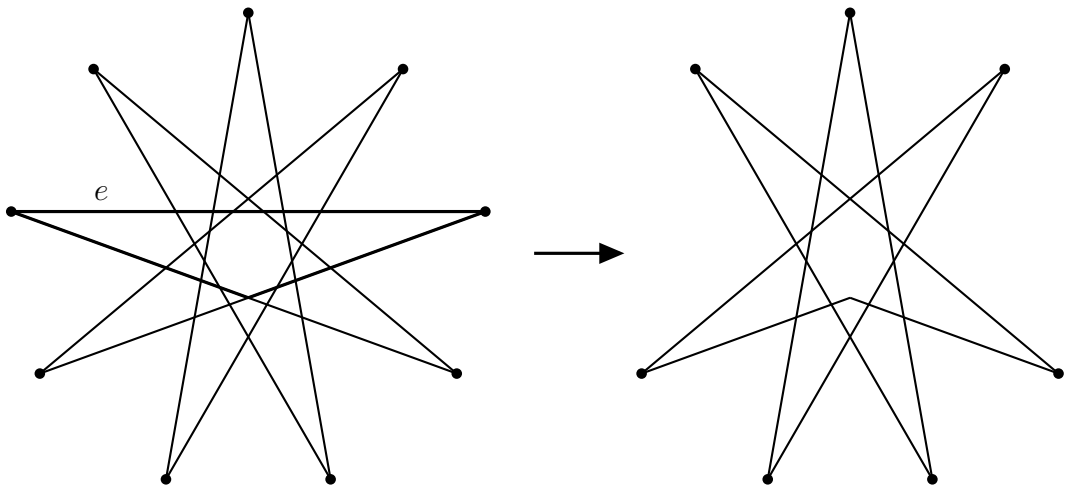

What is more, for our thrackle drawing $\T(G)$, edge removal is always possible on $\T(c)$ as long as $c$ is longer than $c'$. To see that, we first consider the case when $l(c')=3$. We cannot remove the two edges of $c$ incident to the common vertex $v$ (as $\deg v=4$) and also the edges whose respective triangular domains contain the vertices of $c'$, by Lemma~\ref{lemma:triangle}. But the two vertices of $c'$ other than $v$ lie in the domains labelled $0$ and $1$, one in each. Clearly, the vertex lying in the outer domain does not belong to any triangular domain, while the vertex lying in the domain labelled $1$ belongs to exactly two triangular domains, which prohibits the removal of two edges. Hence if $l(c) \ge 5$, there is always an edge of $c$ which can be removed. By Lemma~\ref{lemma:triangle}, after edge removal, the resulting drawing is again a thrackle drawing of the figure-eight graph consisting of an odd cycle $c^*$ of length $l(c)-2$ and the cycle $c'$, and by Lemma~\ref{lemma:mtom}, the drawing of $c^*$ is again a standard musquash. Repeatedly using this argument we obtain a thrackle drawing of a figure-eight graph consisting of two three-cycles, which is a contradiction (as can be seen by inspection or by \cite[Lemma~5(b)]{GY2000}). This completes the proof in the case $l(c')=3$.
The proof in the case $l(c') = 5$ is almost identical, up to the second last step. This time, from among the five vertices of $c'$, other than $v$, two lie in the outer domain of $\T(c)$, and the other two, in the domains labelled $1$. Together with the two edges of $c$ incident to $v$ this gives at most six edges of $c$ spared from removal. Therefore, by repeatedly edge-removing we get a thrackled figure-eight graph consisting of two five-cycles.

The proof is then completed by the following Lemma\footnote{for which we supply a direct proof, although the Thrackle Conjecture is most likely known to be true for graphs of nine vertices -- see the Introduction.}.
\begin{lemma}
\label{lemma:no55}
A figure-eight graph consisting of two five-cycles has no thrackle drawing. 
\end{lemma}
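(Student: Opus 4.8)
The plan is to prove Lemma~\ref{lemma:no55} by contradiction, exploiting the rigidity that the hypotheses of the Theorem confer on the drawing of the musquash component. Suppose a figure-eight graph $G = c \cup c'$, with $c$ and $c'$ both five-cycles sharing a single vertex $v$, admits a thrackle drawing $\T(G)$. We know from the Introduction that, up to isotopy, the only thrackle drawing of a $5$-cycle is the standard $5$-musquash; hence I may assume $\T(c)$ is a standard $5$-musquash, and the structural analysis of the preceding steps applies to $c'$. In particular, the key fact established above tells us that the four vertices of $c'$ other than $v$ all lie in domains labelled $0$ or $1$: two in the outer domain (label $0$) and two in domains labelled $1$ (the five pentagonal ``label $1$'' domains surrounding the central region). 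Note that for $n=5$ the musquash has no domain labelled $2$ or higher, so this constraint is very tight.

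The main line of attack is a direct combinatorial-topological bookkeeping of crossings. First I would label the five vertices of $c'$ in cyclic order as $v = w_0, w_1, w_2, w_3, w_4$ and record, using Lemma~\ref{lemma:1edge} and Lemma~\ref{lemma:2path}, the precise sequence of domain labels they visit: starting at $v$, one incident edge enters a label-$1$ domain and the other stays in the outer domain, and the vertices must alternate between the allowed labels in a way controlled by the two lemmas. Since each edge of $c'$ changes the label by exactly one as it crosses each musquash edge, and since Lemma~\ref{lemma:1edge} pins down that consecutive vertices sit in domains differing by one, the coarse ``label itinerary'' of $c'$ is essentially forced. The second ingredient is a parity count: the closed curve $\T(c')$ is a thrackled five-cycle in its own right, and it must cross $\T(c)$ an odd number of times at $v$ (one crossing) while meeting each of the five edges of $c$ exactly once. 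Combining the label itinerary with the requirement that every pair of the five edges of $c'$ meets exactly once, and that each edge of $c'$ meets each of the five edges of $c$ exactly once, yields a rigid incidence pattern that I expect to be over-determined.

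Concretely, I would argue that the two vertices of $c'$ lying in label-$1$ domains, together with the two in the outer domain and the shared vertex $v$, cannot be connected up by five arcs each crossing the five musquash edges singly without forcing either a repeated crossing between two edges of $c'$ or a missing crossing between some edge of $c'$ and some edge of $c$. One clean way to package this is via the rotation-number / winding argument already used to define the labels: tracking how $\T(c')$ winds relative to $\T(c)$ around the central region, the net contribution must be consistent with $\T(c')$ itself being a standard $5$-musquash, and I would show the only consistent configurations violate the single-crossing condition for some pair of edges. Alternatively, since the drawing has only nine vertices and a bounded number of crossings, one can enumerate the finitely many combinatorially distinct itineraries compatible with the label constraints and eliminate each.

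I expect the main obstacle to be the final enumeration/elimination step: translating the label constraints into a complete and non-redundant list of candidate crossing patterns and then certifying that each fails the thrackle condition. The difficulty is that isotopy invariance gives freedom in how the arcs of $c'$ are routed within and between the allowed domains, so I must be careful to argue at the level of combinatorial data (which domains are entered, in which cyclic order, and how edges of $c'$ cross one another) rather than specific geometric drawings. Making this enumeration airtight -- ensuring no admissible configuration has been overlooked -- is where the real work lies; the parity and label lemmas do the heavy lifting of shrinking the search space, but the residual case analysis for two five-cycles, while finite, is the delicate part.
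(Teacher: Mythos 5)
Your setup is correct and matches the paper's: assume a drawing exists, note that each five-cycle is drawn as a standard musquash crossing the other at $v$, and use Lemma~\ref{lemma:1edge} and Lemma~\ref{lemma:2path} to place two of the free vertices of $c'$ in the outer domain and two in domains labelled $1$. But from that point on your text is a plan rather than a proof, and the one idea that makes the plan executable is missing: the \emph{edge removal} operation. The paper observes that if edge removal is possible on any edge of $c_1$, the figure-eight of two five-cycles reduces to a three-cycle plus a five-cycle and then to two three-cycles, a known contradiction; therefore in a putative counterexample \emph{no} edge of $c_1$ is removable. Via Lemma~\ref{lemma:triangle} (each label-$1$ vertex blocks exactly two triangular domains, each outer vertex blocks none, and the two edges at $v$ are blocked by degree), this forces the two label-$1$ vertices $v_1,v_3$ into essentially two configurations up to reflection, one of which is then reduced to the other by swapping the roles of $c_1$ and $c_2$. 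Without this reduction, your proposed enumeration of ``combinatorially distinct itineraries'' has no a priori bound on the routings of the arcs of $c'$ within and between domains, and you yourself flag that making it airtight is ``where the real work lies'' --- that work is the proof, and it is not done.

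Two further cautions. First, your suggested packaging via rotation numbers/winding of $\T(c')$ relative to $\T(c)$ is unlikely to close the argument on its own: the paper's final elimination is not a parity or winding computation but a reachability argument --- after the edges $vv_1$, $vv_4$, $v_4v_3$, $v_1v_2$ are attached (each step forced up to isotopy and Reidemeister moves by the musquash condition on $c_2$), one shows the last edge $v_2v_3$ cannot cross the required segments in the required order without either recrossing an edge of $c_2$ or leaving $v_2$ unreachable. Parity constraints alone are satisfied by configurations that fail for these finer topological reasons. Second, the phrase ``each edge of $c'$ changes the label by exactly one as it crosses each musquash edge'' conflates the local label change at a single crossing (which is $\pm1$ and trivial) with the global statement that the two \emph{endpoints} of an edge lie in domains whose labels differ by one --- the latter is exactly Lemma~\ref{lemma:1edge} and is not free. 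As written, the proposal identifies the correct starting constraints but omits both the structural reduction and the case analysis that constitute the proof.
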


\section{Proof of the Lemmas}
\label{section:lemmas}

\begin{proof}[Proof of Lemma~\ref{lemma:1edge}]
Let $c$ be an odd cycle of length $n$ and let $\T(c)$ be its standard musquash drawing. We assume the edges of $\T(c)$ to be straight line segments and the vertices to be the vertices of a regular $n$-gon inscribed in the unit circle $C$ bounding the closed unit disc $D$. Let $\gamma=AB$ be a simple curve crossing every edge of $\T(c)$ exactly once and not passing through the vertices. We can assume that both endpoints of $\gamma$ lie inside $C$, and that $\gamma$ meets $C$ in a finite collection of proper crossings $A_1, A_2, \dots, A_k$ labelled in the direction from $A$ to $B$, where $k \ge 0$ is even. Then $\gamma =AA_1 \cup \bigcup_{j=1}^{k-1}A_jA_{j+1}\cup A_kB$, the arcs $AA_1, A_2A_3, \dots, A_kB$ lie in $D$, and the arcs $A_1A_2, A_3A_4, \dots, A_{k-1}A_k$, outside $C$. For each arc in this decomposition, consider the set of edges of $\T(c)$ it crosses. The arcs lying outside $C$ do not meet $\T(c)$ at all. An arc lying in $D$ crosses an edge of $\T(c)$ if and only if its endpoints lie on the opposite sides of that edge. Therefore the set of edges of $\T(c)$ such an arc is crossing depends only on its endpoints, and we lose no generality by replacing that arc by a straight line segment with the same endpoints. Note also that $\gamma$ cannot completely lie inside $C$ (so that $k \ge 1$), as no straight line crosses all the edges of $\T(c)$ (since one of the half-planes, determined by such a straight line, must contain more than the half of the vertices of $\T(c)$ and hence contain two  vertices adjacent in $c$).

An arc $A_jA_{j+1}$ lying in $D$, with both endpoints on $C$, crosses an \emph{even number of consecutive edges} of $\T(c)$: the points $A_j, A_{j+1}$ split $C$ into two segments, hence splitting the set of vertices of $\T(c)$ into two subsets; the arc $A_jA_{j+1}$ crosses all the edges incident to the vertices of the smaller of these two subsets. As a subset of $c$, the union of edges crossed by $A_j A_{j+1}$ is an even path.

The picture is more complicated for the arcs $AA_1, A_kB$ having one endpoint in the interior of $D$. Let $XY$ be an arc lying in $D$, with exactly one endpoint $Y$ on $C$, and let  $S\subset c $ be the union of edges of $\T(c)$ which $XY$ crosses. Then both $S$ and its complement $c\setminus S$ is a finite collection of paths. Denote $O_1(XY)$ the number of paths of odd length in $S$, and $O_0(XY)$ the number of paths of odd length in $c\setminus S$. We have the following key lemma.
\begin{lemma}
\label{lemma:o1o0}
If $X$ lies in a domain labelled $s$, then $O_1(X Y) = s$ and $O_0(XY)= s \pm 1$.
\end{lemma}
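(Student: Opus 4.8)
The plan is to compute the label $s$ as a signed crossing number and then to match it, run by run, with the number of odd paths of $S$. First I would invoke the reduction already available: since the set of edges an arc crosses depends only on its endpoints, I may take $XY$ to be the straight segment from the interior point $X$ to $Y\in C$, and extend it past $Y$ radially out of the disc to infinity. Because all edges of $\T(c)$ lie inside the disc $D$, and because the domain of $\R^2\setminus\T(c)$ lying just inside $C$ between two consecutive polygon vertices is the outer domain (label $0$), the winding number of the positively oriented closed curve $\T(c)$ about $X$ equals the signed number of crossings of this ray with $\T(c)$; that is, $s=\sum_{e\in S}\epsilon(e)$, where $\epsilon(e)=\pm1$ records the orientation of the crossing of $XY$ with the edge $e$ relative to the chosen orientation of $c$.

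Second, I would establish the key structural fact that the signs $\epsilon(e)$ \emph{alternate} along each maximal run of consecutive crossed edges (i.e. along each path of $S$). This is where the geometry of the musquash enters. Writing $n=2m+1$, the two edges of $\T(c)$ meeting at a vertex $P_j$ join it to the two \emph{adjacent} vertices $P_{j+m}$ and $P_{j+m+1}$ on the (almost antipodal) far arc, so they bound a thin ``spike'' at $P_j$; traversed according to the orientation of $c$, the incoming and outgoing edges at $P_j$ therefore point in nearly opposite directions, and hence are crossed by the fixed straight segment $XY$ with opposite signs. Consequently a run of even length contributes $0$ to $\sum_{e\in S}\epsilon(e)$, while a run of odd length contributes $\pm1$, so that $s=\sum_{\text{odd runs}}(\pm1)$.

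The main obstacle is to show that every odd run contributes $+1$ rather than $-1$, for only then does the signed sum collapse to $s=O_1(XY)$, the number of odd paths of $S$. I would derive this from the global positivity of the drawing: $\T(c)$ has winding number $\tfrac12(n-1)>0$ about the centre, and the labels increase from $0$ at the boundary to $\tfrac12(n-1)$ at the innermost domain, so all ``layers'' of the musquash are oriented in the same rotational sense. Tracking the planar cyclic order of the fan of edges emanating towards $X$, this consistency should fix the sign of the first (hence, by the alternation, of every odd-indexed) edge of each run to be $+1$. Pinning down this sign bookkeeping at the boundaries between a run of $S$ and a run of $c\setminus S$ is the delicate point, and I expect it to consume most of the work.

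Finally, for the statement $O_0(XY)=s\pm1$ I would combine a parity count with the single-path structure of the full chord. The line through $X$ and $Y$ meets $C$ in a second point $Z$, and by the observation established above that a chord with both endpoints on $C$ crosses a single \emph{even} consecutive path of $\T(c)$, the edges crossed by $ZY$ form one even path $E_0$; the point $X$ splits these crossings, so $S\subseteq E_0$, while the remaining edges $c\setminus E_0$ form a single path $R$ of odd length $n-|E_0|$. Thus the runs of $S$ and of $c\setminus S$ all live inside $E_0$, except for the run containing $R$, which wraps around the two ends of $E_0$. Since $n$ is odd, the total number of odd runs $O_1+O_0$ is odd, so $O_1$ and $O_0$ have opposite parities; and the confinement of $S$ to the single even path $E_0$, with $R$ a single odd tail, forces $|O_1-O_0|\le 1$ by a direct analysis of the alternating block structure. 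Together these give $O_0=O_1\pm1=s\pm1$, completing the proof.
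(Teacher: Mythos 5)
Your strategy is genuinely different from the paper's: you want to read the label $s$ as a signed crossing number of $XY$ with $\T(c)$ and convert signs into a count of odd runs, whereas the paper places the musquash in explicit coordinates, solves inequalities to determine the crossed edges for a radial segment, and then enumerates every vector $V(Y)$ as $Y$ moves around the circle (Table~\ref{table:VO}). Your first two steps are correct: $s$ is the signed crossing count, and the signs do alternate along each maximal run of consecutive crossed edges, since consecutive edges of the musquash meet at a vertex in a thin wedge and are traversed in nearly opposite directions. But this only yields $|s|\le O_1(XY)$ and $s\equiv O_1(XY)\pmod 2$. The step you defer --- that \emph{every} odd run contributes $+1$ rather than $-1$ --- is the entire content of the equality $O_1(XY)=s$, and ``global positivity of the drawing'' plus ``tracking the fan of edges'' is a heuristic, not an argument. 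You acknowledge this yourself; as it stands the first half of the lemma is not proved, and closing the gap amounts to carrying out essentially the case analysis the paper does explicitly.

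The second gap is more serious because it is not flagged: your derivation of $O_0(XY)=s\pm1$ rests on the claim that $S\subseteq E_0$ with $E_0$ an even path and $R=c\setminus E_0$ a single odd path forces $|O_1-O_0|\le 1$ ``by a direct analysis of the alternating block structure''. That combinatorial assertion is false. Take $E_0=e_1e_2\cdots e_{10}$ with membership pattern $(0,1,1,0,1,1,0,1,1,0)$, so $S=\{e_2,e_3,e_5,e_6,e_8,e_9\}$: the runs of $S$ are three paths of length $2$, giving $O_1=0$, while the runs of $c\setminus S$ are the two singletons $\{e_4\},\{e_7\}$ and the path $e_{10}\cup R\cup e_1$ of odd length $|R|+2$, giving $O_0=3$. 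The parity count is respected ($O_1+O_0$ odd), but $|O_1-O_0|=3$. So the block structure alone cannot give the bound; one must know which sets $S$ are actually realizable as crossing sets of a segment $XY$, and these have the very rigid alternating form $(1,0)^a\cdots$ displayed in Table~\ref{table:VO} --- determining that form is again exactly the work your outline omits. Both halves of the lemma therefore remain unproved by the proposal.
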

Assuming Lemma~\ref{lemma:o1o0}, we can complete the proof of Lemma~\ref{lemma:1edge} as follows. Let $S_1, S_2$ be the unions of edges of $c$ crossed by the arcs $AA_1, BA_k$, respectively. Since $\gamma$ crosses every edge of $\T(c)$ exactly once, the (interiors of the) sets $S_1, S_2$ are disjoint, and the union of $S_1 \cup S_2$ and the sets of edges of $\T(c)$ which are crossed by the arcs $A_jA_{j+1}$ is the whole cycle $c$. So, since the union of edges which are crossed by an arc $A_jA_{j+1}$ is an even path in $c$ (which can be empty), every connected component of the complement $c\setminus(S_1 \cup S_2)$ must be a path of even length. For this to be true, $S_1$ has to contain at least as many odd paths as $c\setminus S_2$ does, and vice versa, so $O_1(AA_1) \geq O_0(BA_k)$ and	 $O_1(BA_k) \geq O_0(AA_1)$. 
Thus, if the points $A, B$ lie in domains labelled $s_1,s_2$, respectively, then by Lemma~\ref{lemma:o1o0} we get $s_1  \geq s_2 - 1$ and $s_2  \geq s_1- 1$, so $s_1 -1 \leq s_2 \leq s_1 + 1$. As $s_1 \neq s_2$, since $\gamma$ has an odd number of crossings with the closed curve $\T(c)$, we obtain $s_2 = s_1 \pm 1$, as claimed.
\end{proof}


\begin{proof}[Proof of Lemma~\ref{lemma:o1o0}]
Our argument above shows that we can assume $XY$ to be a straight line segment, so the proof of the lemma reduces to a question in plane geometry: we have to find the edges of $\T(c)$ which are crossed by $XY$. We can further assume that $s \ne 0$. If $s=0$ we can take $X$ also lying on $C$ and the above arguments show that the union of edges of $\T(c)$ crossed by $XY$ is an even path of $c$ (which can be empty), so $O_1(X Y) = 0$ and $O_0(XY)= 1$.

Let $n=2m+1$. We place the vertices of $\T(c)$ at the points $e^{\pi i/n}, e^{3\pi i/n}, \dots e^{(2n-1)\pi i/n} \in \C=\R^2$ and label the edges of $\T(c)$ in such a way that the $j^{\mathrm{th}}$ edge joins the vertices $e^{(2mj+1)\pi i/n}$ and $e^{(2m(j+1)+1)\pi i/n}$ where $j = 0,1,\dots, n-1$. From the symmetry and by a slight perturbation, we can assume that the point $X$ lies in the open angle $\arg \ z \in (0, \frac{\pi }{n})$, as in Figure~\ref{figure:4}.

\begin{figure}[h]
\begin{tikzpicture}[scale=0.9]
\foreach \x in {0,1,3,4,5,7,8}
\fill  ({((2*pi*\x/9+pi/9) r)}:4) circle (2.5pt);
\coordinate (A3) at ({((5*pi/9) r)}:4);
\coordinate (A2) at ({((3*pi/9) r)}:4);
\coordinate (A7) at ({((-5*pi/9) r)}:4);
\coordinate (A8) at ({((-3*pi/9) r)}:4);
\coordinate (C1) at ($0.95*(A7)+0.05*(A2)$); \coordinate (C3) at ($0.9*(A7)+0.1*(A2)$); \draw[thick] (A2)--(C3); \draw[thick,dashed] (C3)--(C1);
\coordinate (C2) at ($0.95*(A3)+0.05*(A8)$); \coordinate (C4) at ($0.9*(A3)+0.1*(A8)$); \draw[thick] (A8)--(C4); \draw[thick,dashed] (C2)--(C4);
\foreach \x in {0,-1,3,1,4,5} \draw[thick] ({((2*pi*\x/9+pi/9) r)}:4) -- ({((2*pi*(\x+4)/9+pi/9) r)}:4);
\draw (0,0) circle (4);
\coordinate (X) at ({((pi/24) r)}:0.95); \fill  (X) circle (2.5pt); \draw ($ (X) + (0.27,0.24) $) node {$X$};
\coordinate (Y) at ({((pi/24) r)}:4); \fill  (Y) circle (2.5pt); \draw ($ (Y) + (0.27,0.24) $) node {$Y$};
\draw ($ ({((pi/9) r)}:4) + (-0.5,0.15) $) node {$0$};
\draw ($ ({((pi/9) r)}:4) + (-0.5,-0.7) $) node {$n\!-\!1$};
\draw ($ ({((3*pi/9) r)}:4) + (-1,-0.4) $) node {$n\!-\!2$};
\draw ($ ({((3*pi/9) r)}:4) + (0.25,-0.9) $) node {$n\!-\!3$};
\draw ($ ({((7*pi/9) r)}:4) + (0.1,-0.5) $) node {$3$};
\draw ($ ({((7*pi/9) r)}:4) + (0.5,0) $) node {$2$};
\draw (-3.7,0.3) node {$0$};
\draw (-3.7,-0.3) node {$1$};
\draw ($ ({((-pi/9) r)}:4) + (-0.4,0.55) $) node {$2$};
\draw ($ ({((-pi/9) r)}:4) + (-0.45,-0.2) $) node {$1$};
\draw ($ ({((-3*pi/9) r)}:4) + (0,0.7) $) node {$4$};
\draw ($ ({((-3*pi/9) r)}:4) + (-0.55,0.25) $) node {$3$};
\draw ($ ({((11*pi/9) r)}:4) + (0.15,0.95) $) node {$n\!-\!2$};
\draw ($ ({((11*pi/9) r)}:4) + (1.05,0.2) $) node {$n\!-\!1$};
\end{tikzpicture}
\caption{Labelling the edges of a musquash.}
\label{figure:4}
\end{figure}
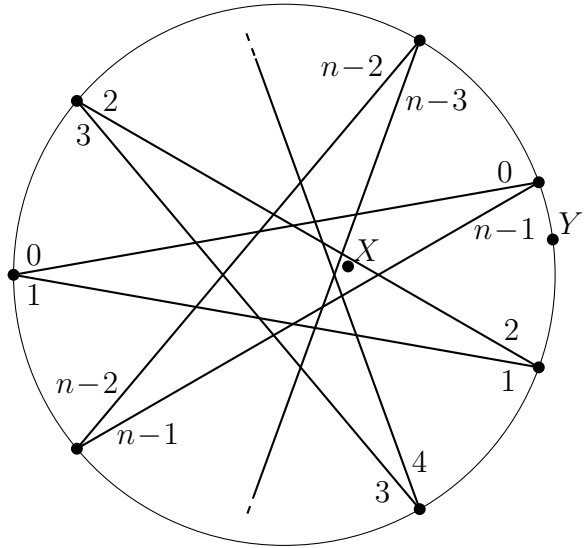

First consider the case when the point $Y$ lies on the radius of $C$ passing through $X$. Then $X=re^{i \alpha}, \; Y=e^{i \alpha}$, where $0 < r < 1, \; 0 < \alpha < \frac{\pi }{n}$. The segment $XY$ crosses the edge $j$ of $\T(c)$ if and only if it crosses the line containing that edge, whose equation is
\begin{equation*}
\Im(z(e^{-(2m(j+1)+1)\pi i/n}- e^{-(2mj+1)\pi i/n})) + \sin(\pi / n) = 0.
\end{equation*}
Hence the segment $XY$ crosses the edge $j$ when $-\Im(e^{i \alpha-(2m(j+1)+1)\pi i/n}-e^{i \alpha-(2mj+1)\pi i/n}) \in (\sin (\frac{\pi}{n}), \frac{1}{r}\sin(\frac{\pi}{n}))$ which is equivalent to
\[
\sin \frac{\pi}{n}< (-1)^j\left(\sin \Big(\alpha +\frac{j\pi}{n}\Big)+\sin \Big(\alpha +\frac{(j-1)\pi}{n}\Big)\right)<\frac{1}{r}\sin \frac{\pi}{n},
\]
which is equivalent again to the following condition:
\begin{equation} \label{equation:1}
\sin \frac{\pi}{2n}< (-1)^j\sin \Big(\alpha +\frac{(2j-1)\pi}{2n}\Big)<\frac{1}{r}\sin \frac{\pi}{2n} \, .
\end{equation}
If $j = 0$, the left-hand inequality of \eqref{equation:1} is false since $\alpha-\frac{\pi}{2n}\in (-\frac{\pi}{2n},\frac{\pi}{2n})$. For all the other values of $j$, we have $\sin(\alpha +\frac{(2j-1)\pi}{2n})> 0$, and so, to satisfy \eqref{equation:1}, $j$ must be nonzero even. Denote $s(j)=\sin(\alpha + \frac{(2j-1)\pi}{2n})$. From the fact that $0 < \alpha < \frac{\pi}{n}$, it follows that
\[
\sin \frac{\pi}{2n}< s(2m)<s(2)<s(2m-2)<s(4)<\dots<s(M)<1,\]
where
\[
M:=\begin{cases}
m/2&:m\ \text{is even}\\
(m+1)/2&:\text{otherwise}.
\end{cases}
\]
As all the crossings of the radial segment $XY$ with the edges of $\T(c)$ have the same orientation, the fact that $X$ lies in the domain labelled $s$ implies that there must be exactly $s$ crossings. Hence the set of the values of $j$ satisfying \eqref{equation:1} is the set of the first $s$ terms of the sequence $(2m, 2, 2m - 2, 4, \dots,M)$, that is, $\{2,4, \dots, 2[\frac{s}2], 2m-2[\frac{s-1}2], 2m-2, 2m\}$. 

We now consider the general case when $Y$ does not necessarily lie on the radius of $C$ passing through $X$. For $Y \in C$, let $V(Y)$ be the $n$-dimensional row-vector over $\Z_2$ whose components are labelled from $0$ to $n - 1$, such that $V(Y)_j = 1$, if $XY$ crosses the edge $j$ of $\T(c)$, and $V(Y)_j = 0$ otherwise. As we have just shown, if $XY$ lies on a radius of $C$, then
\begin{equation}
\label{equation:V}
V(Y) = (0,(0,1)^{[s/2]}, 0^{n-2s-1}, (0,1)^{[(s+1)/2]}),
\end{equation}
where the superscript denotes the number of consecutive repeats of the sequence.

The number $O_1(XY)$ (respectively, $O_0(XY)$) is the number of odd sequences of consecutive ones (respectively, zeros) in the vector $V(Y)$ (counted in the cyclic order, so that if $V(Y)_{n-1-a}= \dots = V(Y)_{n-1}=V(Y)_0= \dots = V(Y)_b$ for some $a, b \ge 0$, we count it as a single sequence of length $a+b+2$). For the vector $V(Y)$ in \eqref{equation:V} we have $O_1(XY) = s$ and $O_0(XY) = s - 1$.

When $Y \in C$ moves in the positive direction, the vector $V(Y)$ only changes when $Y$ passes through the vertices of $\T(c)$. Initially $Y$ lies between the vertices $e^{(2n-1)\pi i/n}$ and $e^{\pi i/n}$, so when $Y$ passes through the vertex $e^{\pi i/n}$, the resulting vector $V(Y)$ is obtained from the one in \eqref{equation:V} by adding the vector $(1, 0,\dots, 0, 1)$, which gives $V(Y) = ((1,0)^{[(s+2)/2]}, 0^{n-2s-1}, (1,0)^{[(s-1)/2]},0)$, so $O_1(XY) = s$ and $O_0(XY) = s - 1$.

When $Y$ keeps moving, every time when it passes through the vertex $e^{(2j+1)\pi i/n}, \; j = 1,\dots, n-1$, we add to $V(Y)$ the vector $(0^{n-2j-1},1^2,0^{2j-1})$ for $j=1, \dots, \frac{n-1}2$, and the vector $(0^{2n-2j-1},1^2,0^{2j-n-1})$ for $j=\frac{n+1}2, \dots, n-1$, as the number of crossings of $XY$ with the two edges incident to that vertex changes from $0$ to $1$ or vice versa. Then a routine check shows that $O_1(XY)$ and $O_0(XY)$ take the following values, as claimed.
\renewcommand{\arraystretch}{1.2}
\begin{table}[h]
\centering
\begin{center}
\begin{tabular}{|c|c|c|c|}
  \hline
  Interval for $j$ & $V(Y)$ & $O_1$ & $O_0$ \\
  \hline
  $\big[ 1, [\frac{s-1}2] \big]$ & $((1,0)^{[\frac{s+2}2]}, 0^{n-2s-1}, (1,0)^{[\frac{s-1}2]-j}, (0,1)^{j},0)$ & $s$ & $s - 1$ \\
  $\big[ [\frac{s-1}2]+1, \frac{n-3}2-[\frac{s}2]\big]$ & $((1,0)^{[\frac{s+2}2]}, 0^{n-2j-3-2[\frac{s}2]}, 1^{2(j-[\frac{s-1}2])}, (0,1)^{[\frac{s-1}2]},0)$ & $s$ & $s + 1$ \\
  $\big[ \frac{n-3}2-[\frac{s}2]+1, \frac{n-3}2 \big]$ & $((1,0)^{\frac{n-1}2-j}, (0,1)^{j-\frac{n-3}2+[\frac{s}2]}, 1^{n-2s-1}, (0,1)^{[\frac{s-1}2]},0)$ & $s$ & $s - 1$ \\
  $\frac{n-1}2$ & $(0,(1,0)^{[\frac{s}2]}, 1^{n-2s-1}, (1,0)^{[\frac{s+1}2]})$ & $s$ & $s - 1$ \\
  $\big[ \frac{n+1}2, \frac{n-1}2+[\frac{s+1}2]\big]$ & $(0,(1,0)^{[\frac{s}2]}, 1^{n-2s-1}, (1,0)^{[\frac{s+1}2]+\frac{n-1}{2}-j}, (0,1)^{j-\frac{n-1}{2}})$ & $s$ & $s - 1$ \\
  $\!\! \big[ \frac{n+1}2+[\frac{s+1}2], n-2-[\frac{s}2] \big]$ \!\!\!\!& $(0,(1,0)^{[\frac{s}2]}, 1^{2(n-[\frac{s}2]-1-j)}, 0^{2j-n+1-2[\frac{s+1}2]}, (0,1)^{[\frac{s+1}2]})$ & $s$ & $s + 1$ \\
  $\big[ n-1-[\frac{s}2], n-1 \big]$ & $(0,(1,0)^{n-1-j}, (0,1)^{[\frac{s}2]-n+1+j}, 0^{n-2s-1}, (0,1)^{[\frac{s+1}2]})$ & $s$ & $s - 1$ \\
  \hline
\end{tabular}
\end{center}
\caption{\ }
\label{table:VO}
\qedhere
\end{table}
\end{proof}

\begin{proof}[Proof of Lemma~\ref{lemma:mtom}]
Let $c$ be an odd cycle of length $n$. Choose a direction on $c$ and label the edges $0, 1, \dots , n - 1$ in consecutive order. According to \cite{GD2001}, the standard musquash is uniquely determined by its \emph{unsigned crossing table}, that is, by the order of crossings on every edge with the other edges. For the standard musquash $\T(c)$, this order on the edge labelled $i$ is
\begin{equation}\label{eq:crossing}
i + n - 3, i + n - 5, \dots , i + 4, i + 2, i + n - 2, i + n - 4, \dots ,i + 5, i + 3,
\end{equation}
where the labels are computed modulo $n$ \cite{GD2001}. By Lemma~\ref{lemma:triangle}, the edge removal operation on any edge results in a thrackle drawing $\T(c^*)$ of a cycle of length $n-2$. Without loss of generality we assume that we remove the edge labelled $n-2$. We keep the labels $0, 1, \dots, n-4$ for the edges of $c^*$ which are unaffected by the removal, and we label $n-3$ the single edge of $c^*$ formed by the segments of edges $n-3$ and $n-1$ of $c$ as the result of the edge removal.

The proof now is just a routine verification that the order of crossings for every edge of $\T(c^*)$ is the same as that given by \eqref{eq:crossing}, with $n$ replaced by $n-2$, and with the labels computed modulo $n-2$. We consider three cases.

Suppose an edge $i$ of $\T(c)$ crosses all the three edges $n - 3, n- 2, n - 1$ (that is, $1 \le i \le n-5$). If $i$ is even, then from \eqref{eq:crossing} the order of crossings is $i - 3, i - 5, \dots, 1, n-1$, $n-3, \dots, i + 4, i + 2, i - 2, i - 4, \dots , 2, 0, n-2, n-4, \dots, i + 5, i + 3$, so the crossings with $n-3$ and $n-1$ are consecutive. Hence the crossing order on the edge $i$ of $\T(c^*)$ is obtained by deleting the labels $n - 1$ and $n - 2$, which results in the same sequence as in \eqref{eq:crossing}, with $n$ replaced by $n-2$. If $i$ is odd, then the order of crossings is $i - 3, i - 5,$ $\dots, 2, 0, n-2, \dots, i + 4, i + 2, i - 2, i - 4, \dots, 1, n-1, n-3, \dots, i + 5, i + 3$, and the proof follows by a similar argument.

Suppose now an edge $i$ of $\T(c)$ crosses only two of the three edges $n-3, n-2, n-1$, so that $i=0$ or $i=n-4$. When $i=0$, by \eqref{eq:crossing} the crossing order is $n - 3, n - 5, \dots , 4, 2, n - 2,$ $n - 4, \dots , 5, 3$. The crossing order on the edge $0$ of $\T(c^*)$ is obtained by deleting the labels $n - 3$ and $n - 2$, which results in the same sequence, with $n$ replaced by $n-2$. Similarly, for $i=n-4$, \eqref{eq:crossing} gives $n - 7, n - 9, \dots , 0, n - 2, n - 6, n - 8, \dots , 1, n - 1$. The crossing order on the edge $n-4$ of $\T(c^*)$ is obtained by deleting the labels $n - 1$ and $n - 2$. The resulting sequence is the same as that obtained from \eqref{eq:crossing} by replacing $n$ by $n-2$,  and then reducing modulo $n-2$. 

And finally, the crossing order on the edge $n-3$ of $\T(c^*)$ is the crossing order on the edge $n-3$ of $\T(c)$, up to but excluding the crossing with the edge $n-1$, followed by the crossing order on the edge $n-1$  of $\T(c)$ starting from but excluding the crossing with the edge $n-3$. From \eqref{eq:crossing} we obtain the sequence $n - 6, n - 8, \dots , 1, n - 5, \dots , 4, 2$. This sequence is the same as that obtained from \eqref{eq:crossing} by replacing $n$ by $n-2$,  and then reducing modulo $n-2$. 
\end{proof}


\begin{proof}[Proof of Lemma~\ref{lemma:2path}]
The fact that $u$ lies in a domain labelled $1$ follows from Lemma~\ref{lemma:1edge}. Then, again by Lemma~\ref{lemma:1edge}, $w$ lies either in the outer domain, or in a domain labelled $2$. Arguing by contradiction, suppose that $w$ lies in a domain labelled $2$.
Our approach is to shorten the musquash $\T(c)$ to a standard musquash of length at most $7$ using the edge removal operation. In view of Lemma~\ref{lemma:triangle}, the edge removal operation on $c$ is forbidden on the following edges: on the two edges incident to $v$ as $\deg v > 2$, on the two edges whose corresponding triangular domains $\triangle$ contain the vertex $u$, and on the four edges whose corresponding triangular domains $\triangle$ contain the vertex $w$ (it is not hard to see that if a point lies in a domain with label $i < \frac{n-1}{2}$, then it is contained in $2i$ triangular domains, hence forbidding the removal of $2i$ edges; a point lying in the domain with label $\frac{n-1}{2}$, the innermost $n$-gon of the musquash, lies in all the triangular domains, hence not permitting any edge removal at all). This gives no more than $8$ edges in total. So edge removal on $c$ is always possible as long as the cycle $c$ has at least $9$ edges. By Lemma~\ref{lemma:mtom}, edge removal results in a standard musquash, and what is more, the vertices $u$ and $w$ still lie in the domains labelled $1$ and $2$ respectively, of the complement to that new musquash. Repeating edge removal, we come to a thrackle consisting of a standard $7$-musquash, with a two-path attached to its vertex. Note that for some pairs of domains containing $u$ and $w$, it could happen that the sets of forbidden edges overlap, which could make further edge removal possible, hence resulting in a standard $5$-musquash, with a two-path attached to its vertex.
Lemma~\ref{lemma:2path} is obvious when $c$ is a 3-cycle. So Lemma~\ref{lemma:2path} follows form the following result.\end{proof}

\begin{lemma}\label{lemma:2pathon5or7}
Lemma~\ref{lemma:2path} holds when $c$ is a 5-cycle or a 7-cycle.
\end{lemma}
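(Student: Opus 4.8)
The plan is to treat Lemma~\ref{lemma:2pathon5or7} as the base of the reduction already set up in the proof of Lemma~\ref{lemma:2path}. By Lemma~\ref{lemma:1edge} the vertex $u$ lies in a domain labelled $1$ and $w$ in a domain labelled $0$ or $2$, and it remains only to exclude the label $2$ when $\T(c)$ is the standard $5$- or $7$-musquash. Since there is nothing further to reduce, I would argue directly on these two explicit drawings. First I would exploit the dihedral symmetry of the musquash to normalise the position of the common vertex $v$, so that the two edges of $c$ incident to $v$ form a fixed pair; up to this symmetry there are only finitely many label-$1$ domains available for $u$ and finitely many label-$2$ domains available for $w$, and I would list representatives of each. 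For $n=5$ the only label-$2$ domain is the central pentagon, so the list is short; for $n=7$ a label-$2$ domain is not innermost and occurs in several positions relative to $v$, which is where the bulk of the work lies.

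The second step is to pin down, in each case, the combinatorial pattern of crossings of the two edges $\T(vu)$ and $\T(uw)$ with the edges of $\T(c)$. Here I would reuse the decomposition from the proof of Lemma~\ref{lemma:1edge}: cut each of $\T(vu)$ and $\T(uw)$ along its crossings with the bounding circle $C$, replace the interior chords by straight segments, and read off the crossed edges. The interior sub-arcs cross blocks of consecutive edges of $c$ of even length, while the terminal sub-arcs containing $v$, $u$ and $w$ have crossing patterns governed by Lemma~\ref{lemma:o1o0} with $s=0$, $s=1$ and $s=2$ respectively. Imposing the thrackle conditions --- that $\T(vu)$ crosses exactly the $n-2$ edges of $c$ not incident to $v$ (each once), that $\T(uw)$ crosses all $n$ edges (each once), and that $\T(vu)$ and $\T(uw)$ meet only at $u$ --- forces, for each candidate pair of domains, an essentially determined cyclic crossing word along the path $vuw$.

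The third step is to derive a contradiction from each such word. Winding numbers alone cannot suffice here, since a label-$2$ endpoint and a label-$0$ endpoint have the same parity and both are compatible with the crossing counts; one must use the cyclic order of the crossings together with planarity. Tracking the sequence of domains traversed by $\T(uw)$ --- whose label changes by $\pm 1$ at each crossing of $\T(c)$ --- starting from $u$ in label $1$, I would show that in order to visit every edge of $\T(c)$ exactly once and terminate in a label-$2$ domain, the arc $\T(uw)$ is forced either to cross some edge of $\T(c)$ a second time or to re-cross $\T(vu)$, each of which violates the thrackle condition. Carrying this out case by case rules out $w$ in label $2$, so $w$ lies in the outer domain, as claimed.

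The step I expect to be the main obstacle is the third one for the $7$-musquash. Because a label-$2$ domain of the $7$-musquash is an intermediate layer rather than the innermost heptagon, there are several inequivalent positions of $w$ relative to $v$ and to the forced route of $\T(vu)$, and establishing \emph{non-existence} of a valid drawing --- as opposed to merely tallying crossing numbers --- demands a careful planarity argument in each. I would organise these by the cyclic crossing word produced in the second step and eliminate them uniformly, using the dihedral symmetry to keep the number of genuinely distinct cases small.
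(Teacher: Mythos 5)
Your overall strategy -- normalise by the dihedral symmetry, enumerate the finitely many candidate domains for $u$ and $w$, read off the sets of edges crossed by the terminal sub-arcs via the decomposition along $C$ and Lemma~\ref{lemma:o1o0}, and then eliminate each case by a planarity argument -- is the same family of argument the paper uses (the paper packages the second step as the ``diagram'' of a curve and the vectors $V(\cdot)$ of Table~\ref{table:VO}). However, as it stands the proposal has two genuine gaps. First, your list of thrackle conditions in step two, and your proposed contradiction mechanism in step three (``forced to cross some edge of $\T(c)$ a second time or to re-cross $\T(vu)$''), omit the constraint that actually kills several of the surviving configurations: the curve $\T(uv)$ must terminate \emph{at the vertex} $v$ of the musquash, which (after perturbing $v$ into the outer domain) forces the last two crossings of $\T(uv)$, counting from $u$, to be with the two edges of $c$ incident to $v$, consecutively and adjacent to $v$. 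In both the $5$- and $7$-cases the paper exhibits configurations that \emph{can} be completed to a simple arc crossing exactly the required edges exactly once and not re-crossing $\T(uw)$ -- so they pass every test you impose -- and are excluded only because this terminal condition fails (``the edge $uv$ cannot be deformed to one attached to a vertex of $\T(c)$''). Requiring merely that $\T(vu)$ cross the $n-2$ edges not incident to $v$ is necessary but not sufficient. Relatedly, your claim that the thrackle conditions force ``an essentially determined cyclic crossing word'' is too optimistic: the outside arcs joining the points on $C$ can be attached in several inequivalent ways, and the paper must branch on these (e.g.\ the two choices for the segment $B^4B^5$, and the choices of which $B^i$ the point $B^1$ connects to).

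Second, the entire content of this lemma is the finite case check itself; the reduction framework is already supplied by Lemmas~\ref{lemma:1edge}, \ref{lemma:o1o0} and \ref{lemma:2path}. Your step three is stated in the conditional (``I would show\dots''), and you yourself identify the $7$-musquash eliminations as the main obstacle without carrying any of them out. Until those cases are actually enumerated and each one discharged -- using, in particular, the vertex-attachment constraint above and not only double-crossing or re-crossing obstructions -- the proposal is a plan rather than a proof. (Minor omission: the paper also shortens the work for two of the four positions of $u$ in the $7$-case by an edge-removal reduction to the $5$-case via Lemmas~\ref{lemma:triangle} and~\ref{lemma:mtom}; you could adopt this to cut down your case list.)
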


\begin{remark} In order to complete the proof of the Theorem, it remains to prove Lemmas~\ref{lemma:2pathon5or7} and~\ref{lemma:no55}, which deal with thrackles that are sufficiently small that they can be treated by computer (which is why we separated Lemma~\ref{lemma:2pathon5or7} from Lemma~\ref{lemma:2path}). 
Nevertheless, we will now give formal proofs, in part so that the paper does not rely on computer proof, and in part to display just how combinatorially complicated such simple thrackle problems can be. It would be interesting if simpler proofs of these two lemmas could be found.
\end{remark}

\begin{proof}[Proof of Lemma~\ref{lemma:2pathon5or7}]
Before beginning in earnest, let us introduce some terminology, and make some observations, valid where $c$ has any odd order $n\geq 3$. We employ the approach used in the proof of Lemma~\ref{lemma:1edge}. Inscribe $\T(c)$ in a circle $C$ bounding the disc $D$, and consider a simple curve $\gamma=AB$ which crosses all the edges of $\T(c)$ exactly once without passing through the vertices. Without loss of generality we can decompose $\gamma$ into the union of segments, $\gamma =AA_1 \cup \bigcup_{j=1}^{k-1}A_jA_{j+1}\cup A_kB$, where the arcs $AA_1, A_2A_3, \dots, A_kB$ lie in $D$, and the arcs $A_1A_2, A_3A_4, \dots, A_{k-1}A_k$, outside $C$. Furthermore, we can assume that $AA_1$ and $A_kB$ are straight line segments. Now consider an arc $A_jA_{j+1}$  in $D$. The set of edges which it crosses can be found as follows: the points $A_j, A_{j+1}$ split the set of vertices of $\T(c)$ into two subsets lying on $C$. Then the arc $A_jA_{j+1}$ crosses all the edges incident to the vertices of the smaller of that two subsets. The idea now is to push $A_jA_{j+1}$ as much as possible off $D$, to a small neighbourhood of $C$. Thus, by a sequence of Reidemeister moves, we replace $A_jA_{j+1}$ by a union of small straight line segments (with the endpoints on $C$), each lying in a neighbourhood of a vertex and crossing the two edges incident to it, and a union of arcs lying outside of $D$ joining (in the original order) the endpoints of these small straight line segments. Let $\gamma'$ be the curve obtained by performing this  replacement on each arc $A_jA_{j+1}$ in $D$. The curve $\gamma'$ is simple, has the same endpoints $A, B$ and crosses all the edges of $\T(c)$ exactly once. We call $\gamma' \cap D$ the \emph{diagram} of $\gamma$.
\begin{figure}[h]
\begin{tikzpicture}[scale=0.75]
\centering
\foreach \x in {0,1,...,8}{
\fill  ({((2*pi*\x/9+pi/9) r)}:4) circle (2.5pt);
\fill  ($({((2*pi*\x/9+pi/9) r)}:4)+(10,0)$) circle (2.5pt);
\draw[thick] ($({((2*pi*\x/9+pi/9) r)}:4)+(10,0)$) -- ($({((2*pi*(\x+4)/9+pi/9) r)}:4)+(10,0)$);
\draw[thick] ({((2*pi*\x/9+pi/9) r)}:4) -- ({((2*pi*(\x+4)/9+pi/9) r)}:4);
}
\draw (10,0) circle (4);
\coordinate (X) at ({((pi/24) r)}:0.95); \fill  (X) circle (2.5pt);
\coordinate (Z) at ($({((pi/24) r)}:0.95)+(10,0)$); \fill  (Z) circle (2.5pt);
\coordinate (Y) at ($({((pi/24) r)}:4)+(10,0)$); \fill  (Y) circle (2.5pt);
\draw[thick] (Z)--(Y);
\coordinate (A) at ($({((3*pi/9+pi/12) r)}:4)+(10,0)$); \fill (A) circle (2.5pt);
\coordinate (B) at ($({((3*pi/9-pi/12) r)}:4)+(10,0)$); \fill (B) circle (2.5pt);
\draw[thick] (A)--(B);
\coordinate (C) at ($({((5*pi/9+pi/12) r)}:4)+(10,0)$); \fill (C) circle (2.5pt);
\coordinate (D) at ($({((5*pi/9-pi/12) r)}:4)+(10,0)$); \fill (D) circle (2.5pt);
\draw[thick] (C)--(D);
\coordinate (E) at ($({((pi+pi/12) r)}:4)+(10,0)$); \fill (E) circle (2.5pt);
\coordinate (F) at ($({((pi-pi/12) r)}:4)+(10,0)$); \fill (F) circle (2.5pt);
\draw[thick] (E)--(F);
\coordinate (X2) at ({((7*pi/9+pi/24) r)}:1.8); \fill  (X2) circle (2.5pt);
\coordinate (Z2) at ($({((7*pi/9+pi/24) r)}:1.8)+(10,0)$); \fill  (Z2) circle (2.5pt);
\coordinate (Y2) at ($({((7*pi/9+pi/24) r)}:4)+(10,0)$); \fill  (Y2) circle (2.5pt);
\draw[thick] (Z2)--(Y2);
\coordinate (G) at ({((pi/9+pi/54) r)}:4.25);
\draw[thick] (X) to [out= -50,in=15] (G);
\coordinate (H) at ({((7*pi/9-pi/21) r)}:4.1);
\draw[thick] (G) to [out= -165,in=-63] (H);
\coordinate (J) at ({((pi+pi/24) r)}:4.1);
\draw[thick] (H) to [out= 117,in=120] (J);
\draw[thick] (J) to [out= -60,in=-120] (X2);
\end{tikzpicture}
\caption{A curve and its diagram.}
\label{figure:diagram}
\end{figure}
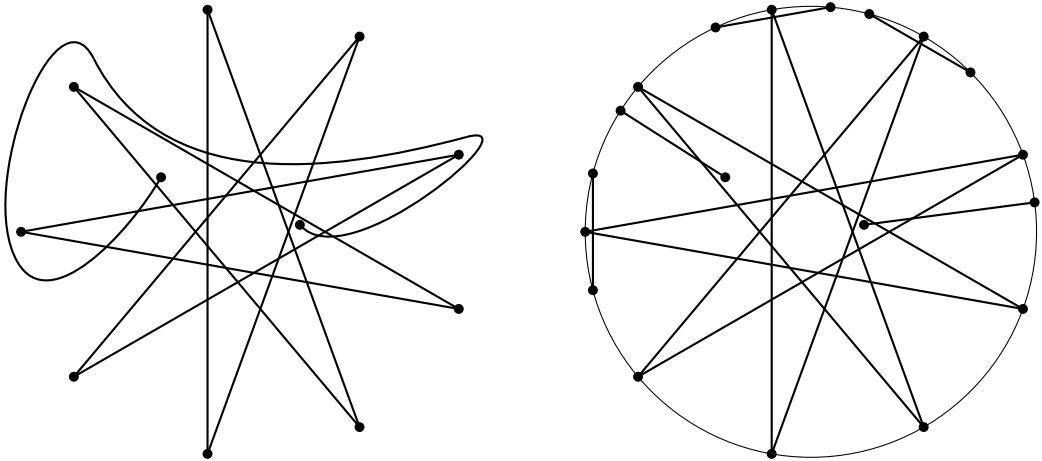

Note that different (Reidemeister inequivalent) curves $\gamma$ may have the same diagram, as one can join the endpoints of the straight line segment lying on $C$ by arcs lying outside $D$ in different ways.

Now suppose that $c$ is a 5-cycle, drawn as $5$-musquash, with a two-path $vuw$ attached to vertex $v$. 
Consider the diagram for the edge $wu$. There is only one domain labelled $2$ for the $5$-musquash and, up to rotation, only one possible starting segment  $wA^1, \; A^1 \in C$; we assume $\arg A^1\in (0,\pi/10)$ (here and below we use the superscripts rather than subscripts for the points on $C$, as a priori we do not know in which order they are connected by the arcs lying outside $D$). Place the musquash and label its edges as in the proof of Lemma~\ref{lemma:o1o0} (so that the vertices are at the $5^{\text{th}}$ roots of $-1$) -- see Figure~\ref{figure:diagrams5}. Again as in the proof of Lemma~\ref{lemma:o1o0}, let $V(w)$ be the row-vector in $\Z_2^5$ whose components are labelled $0, \dots, 4$ such that $V(w)_j=1$ if $wA^1$ crosses the edge $j$, and $V(w)_j=0$ otherwise. By \eqref{equation:V} $V(w)=(0,0,1,0,1)$. Now consider the segment $uA^2, \; A^2 \in C$, of the diagram of $uw$. At first sight, the point $u$ could be placed in one of the five domains labelled $1$, and for each position of $u$ there are five possible choices of $A^2$. From Table~\ref{table:VO} with $n=5, \; s=1$ we find that $V(u)$ is one of the following five vectors, \emph{up to a cyclic permutation} (where initially we choose $u$ in the angle $\arg u \in (0, \pi/5)$):
\begin{equation}\label{eq:V5u}
    (0,0,0,0,1), \quad (1,0,0,0,0), \quad (1,0,1,1,0), \quad (0,1,1,1,0), \quad (0,1,1,0,1).
\end{equation}
Now, the sets of edges crossed by $wA^1$ and by $uA^2$ must be disjoint, and the complement to their union must be the union of even paths in $c$ (or empty). Up to reflection, this gives only two possibilities for $V(u)$: either the first vector in \eqref{eq:V5u} cyclicly shifted by one position to the left, or the third vector in \eqref{eq:V5u} cyclicly shifted by two positions to the left. The corresponding diagrams are given in Figure~\ref{figure:diagrams5} (where we also added the outside arcs, as they are determined uniquely, up to isotopy).

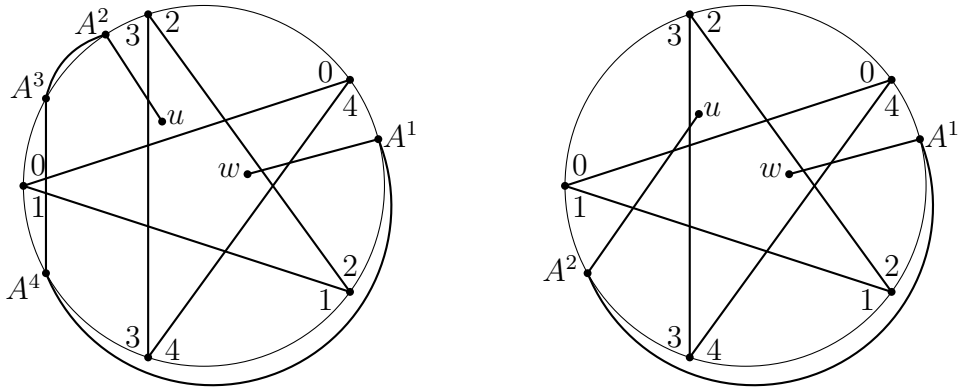
\begin{figure}[h]
\begin{tikzpicture}[scale=0.6]
\foreach \x in {0,1,...,4} {
\fill  ({4*cos((2*pi*\x/5+pi/5) r)},{4*sin((2*pi*\x/5+pi/5) r)}) circle (2.5pt);
\draw ({3.7*cos((2*pi*(2*\x)/5+pi/5+pi/25) r)},{3.7*sin((2*pi*(2*\x)/5+pi/5+pi/25) r)}) node {$\x$};
\draw ({3.7*cos((2*pi*(2*\x+2)/5+pi/5-pi/25) r)},{3.7*sin((2*pi*(2*\x+2)/5+pi/5-pi/25) r)}) node {$\x$};
\draw[thick] ({4*cos((2*pi*\x/5+pi/5) r)},{4*sin((2*pi*\x/5+pi/5) r)}) -- ({4*cos((2*pi*(\x+2)/5+pi/5) r)},{4*sin((2*pi*(\x+2)/5+pi/5) r)});
\fill  ({12+4*cos((2*pi*\x/5+pi/5) r)},{4*sin((2*pi*\x/5+pi/5) r)}) circle (2.5pt);
\draw ({12+3.7*cos((2*pi*(2*\x)/5+pi/5+pi/25) r)},{3.7*sin((2*pi*(2*\x)/5+pi/5+pi/25) r)}) node {$\x$};
\draw ({12+3.7*cos((2*pi*(2*\x+2)/5+pi/5-pi/25) r)},{3.7*sin((2*pi*(2*\x+2)/5+pi/5-pi/25) r)}) node {$\x$};
\draw[thick] ({12+4*cos((2*pi*\x/5+pi/5) r)},{4*sin((2*pi*\x/5+pi/5) r)}) -- ({12+4*cos((2*pi*(\x+2)/5+pi/5) r)},{4*sin((2*pi*(\x+2)/5+pi/5) r)});
}
%
%
\draw (0,0) circle (4);
\coordinate (w) at  ({cos((pi/12) r)},{sin((pi/12) r)}); \fill  (w) circle (2.5pt); \draw ($ (w) + (-0.4,0.1) $) node {$w$};
\coordinate (A1) at ({4*cos((pi/12) r)},{4*sin((pi/12) r)}); \fill  (A1) circle (2.5pt); \draw ($ (A1) + (0.5,0.1) $) node {$A^1$};
\draw [thick] (w)--(A1);
\coordinate (u) at  ({1.7*cos((3*pi/5+pi/12) r)},{1.7*sin((3*pi/5+pi/12) r)}); \fill  (u) circle (2.5pt); \draw ($ (u) + (0.3,0.1) $) node {$u$};
\coordinate (A2) at ({4*cos((3*pi/5+pi/12) r)},{4*sin((3*pi/5+pi/12) r)}); \fill  (A2) circle (2.5pt); \draw ($ (A2) + (-0.3,0.3) $) node {$A^2$};
\draw [thick] (u)--(A2);
\coordinate (A3) at (-3.5,{sqrt(16-3.5^2)}); \fill  (A3) circle (2.5pt); \draw ($ (A3) + (-0.4,0.3) $) node {$A^3$};
\coordinate (A4) at (-3.5,{-sqrt(16-3.5^2)}); \fill  (A4) circle (2.5pt); \draw ($ (A4) + (-0.5,-0.3) $) node {$A^4$};
\draw [thick] (A3)--(A4);
\draw[thick] (A2) to [bend right] (A3);
\draw[thick] let \p1 = ($ 0.5*(A4) - 0.5*(A1) $), \n1 = {veclen(\x1,\y1)}, \n3={atan(\y1/\x1)} in (A1) arc (\n3:\n3-180:\n1);
%
%
\draw (12,0) circle (4);
\coordinate (w) at  ({12+cos((pi/12) r)},{sin((pi/12) r)}); \fill  (w) circle (2.5pt); \draw ($ (w) + (-0.4,0.1) $) node {$w$};
\coordinate (A1) at ({12+4*cos((pi/12) r)},{4*sin((pi/12) r)}); \fill  (A1) circle (2.5pt); \draw ($ (A1) + (0.5,0.1) $) node {$A^1$};
\draw [thick] (w)--(A1);
\coordinate (u) at  ({12+1.9*cos((3*pi/5+pi/12) r)},{1.9*sin((3*pi/5+pi/12) r)}); \fill  (u) circle (2.5pt); \draw ($ (u) + (0.3,0.1) $) node {$u$};
\coordinate (A2) at ({12-3.5},{-sqrt(16-3.5^2)}); \fill  (A2) circle (2.5pt); \draw ($ (A2) + (-0.6,0.1) $) node {$A^2$};
\draw [thick] (u)--(A2);
\draw[thick] let \p1 = ($ 0.5*(A2) - 0.5*(A1) $), \n1 = {veclen(\x1,\y1)}, \n3={atan(\y1/\x1)} in (A1) arc (\n3:\n3-180:\n1);
\end{tikzpicture}
\caption{Diagrams of $wu$ for the $5$-musquash.}
\label{figure:diagrams5}
\end{figure}

We next consider the edge $uv$. By slightly perturbing the drawing in a neighbourhood of $v$ we can move $v$ into the outer domain. Then, by construction, in the diagram of the edge $vu$, the first two edges of the musquash $\T(c)$ that $uv$ crosses, counting from $v$, are incident to the same vertex of $\T(c)$. What is more, as $v$ lies in the outer domain, the starting segment $uB^1, \; B^1 \in C$, of the diagram of $uv$ has to be chosen in such a way that the complement to the set of edges of $\T(c)$ which it crosses is a union of even paths. From \eqref{eq:V5u} we see that this is only possible for the first two vectors, so that  $uB^1$ only crosses a single edge. The choice of that edge determines the diagram of $uv$ uniquely by the fact that the remaining straight lines segment in it are short segments close to the vertices of $\T(c)$ crossing pairs of edges incident to those vertices. We can now add the diagram of $uv$ to the diagram of $wu$ on the left in Figure~\ref{figure:diagrams5} in the three possible ways shown in Figure \ref{figure:5uvw1}, where in the diagram on the right in Figure~\ref{figure:5uvw1}, there are two possible ways of adding the segment $B^4B^5$. Now, in the diagram on the left in Figure~\ref{figure:5uvw1}, the point $B^1$ can only be connected to $B^4$, and then the only way to make a thrackle pass is to join $B^4$ to $B^3$ \emph{inside} the disc $D$, going around the point $w$. But then the last two crossings on the edge $uv$ counting from $u$ are not with two consecutive edges of $\T(c)$, and so the edge $uv$ cannot be deformed to one attached to a vertex of $\T(c)$. In the diagram in the middle in Figure~\ref{figure:5uvw1}, the point $B^1$ cannot be joined to any other point $B^j$. In the diagram on the right in Figure~\ref{figure:5uvw1}, the segment $B^2B^3$ is unreachable from the point $B^1$, no matter which of two possibilities for segment $B^4B^5$ are chosen.

\begin{figure}[h!]
\begin{tikzpicture}[scale=0.6]
\foreach \x in {0,1,...,4} {
\fill  ({4*cos((2*pi*\x/5+pi/5) r)},{4*sin((2*pi*\x/5+pi/5) r)}) circle (2.5pt);
\draw[thick] ({4*cos((2*pi*\x/5+pi/5) r)},{4*sin((2*pi*\x/5+pi/5) r)}) -- ({4*cos((2*pi*(\x+2)/5+pi/5) r)},{4*sin((2*pi*(\x+2)/5+pi/5) r)});
}
\draw (0,0) circle (4);
\coordinate (w) at  ({cos((pi/12) r)},{sin((pi/12) r)});
\fill  (w) circle (2.5pt);
\draw ($ (w) + (-0.4,0.1) $) node {$w$};
\coordinate (A1) at ({4*cos((pi/12) r)},{4*sin((pi/12) r)});
\fill  (A1) circle (2.5pt);
\draw ($ (A1) + (0.5,0.1) $) node {$A^1$};
\draw [thick] (w)--(A1);
\coordinate (u) at  ({1.7*cos((3*pi/5+pi/12) r)},{1.7*sin((3*pi/5+pi/12) r)});
\fill  (u) circle (2.5pt);
\draw ($ (u) + (0.3,0.1) $) node {$u$};
\coordinate (A2) at ({4*cos((3*pi/5+pi/12) r)},{4*sin((3*pi/5+pi/12) r)});
\fill  (A2) circle (2.5pt);
\draw ($ (A2) + (-0.3,0.3) $) node {$A^2$};
\draw [thick] (u)--(A2);
\coordinate (A3) at (-3.5,{sqrt(16-3.5^2)});
\fill  (A3) circle (2.5pt);
\draw ($ (A3) + (-0.4,0.3) $) node {$A^3$};
\coordinate (A4) at (-3.5,{-sqrt(16-3.5^2)});
\fill  (A4) circle (2.5pt);
\draw ($ (A4) + (-0.5,-0.3) $) node {$A^4$};
\draw [thick] (A3)--(A4);
\draw[thick] (A2) to [bend right] (A3); 
\draw[thick] let \p1 = ($ 0.5*(A4) - 0.5*(A1) $),
\n1 = {veclen(\x1,\y1)}, \n3={atan(\y1/\x1)}
in (A1) arc (\n3:\n3-180:\n1);
\coordinate (B1) at ({4*cos((3*pi/5+pi/24) r)},{4*sin((3*pi/5+pi/24) r)});
\fill  (B1) circle (2.5pt);
\draw ($ (B1) + (0.1,0.5) $) node {$B^1$};
\draw [thick] (u)--(B1);
\coordinate (B2) at ({4*cos((-pi/5+pi/10) r)},{4*sin((-pi/5+pi/10) r)});
\fill  (B2) circle (2.5pt);
\draw ($ (B2) + (-0.4,0.3) $) node {$B^2$};
\coordinate (B3) at ({4*cos((-pi/5-pi/10) r)},{4*sin((-pi/5-pi/10) r)});
\fill  (B3) circle (2.5pt);
\draw ($ (B3) + (-0.4,0.2) $) node {$B^3$};
\draw [thick] (B3)--(B2);
\coordinate (B4) at ({4*cos((pi/5+pi/10) r)},{4*sin((pi/5+pi/10) r)});
\fill  (B4) circle (2.5pt);
\draw ($ (B4) + (0.4,0.3) $) node {$B^4$};
\coordinate (B5) at ({4*cos((pi/5-pi/12) r)},{4*sin((pi/5-pi/12) r)});
\fill  (B5) circle (2.5pt);
\draw ($ (B5) + (0.4,0.4) $) node {$B^5$};
\draw [thick] (B4)--(B5);
%
\foreach \x in {0,1,...,4} {
\fill  ({9+4*cos((2*pi*\x/5+pi/5) r)},{4*sin((2*pi*\x/5+pi/5) r)}) circle (2.5pt);
\draw[thick] ({9+4*cos((2*pi*\x/5+pi/5) r)},{4*sin((2*pi*\x/5+pi/5) r)}) -- ({9+4*cos((2*pi*(\x+2)/5+pi/5) r)},{4*sin((2*pi*(\x+2)/5+pi/5) r)});
}
\draw (9,0) circle (4);
\coordinate (w) at  ({9+cos((pi/12) r)},{sin((pi/12) r)});
\fill  (w) circle (2.5pt);
\draw ($ (w) + (-0.4,0.1) $) node {$w$};
\coordinate (A1) at ({9+4*cos((pi/12) r)},{4*sin((pi/12) r)});
\fill  (A1) circle (2.5pt);
\draw ($ (A1) + (0.5,0.1) $) node {$A^1$};
\draw [thick] (w)--(A1);
\coordinate (u) at  ({9+1.7*cos((3*pi/5+pi/12) r)},{1.7*sin((3*pi/5+pi/12) r)});
\fill  (u) circle (2.5pt);
\draw ($ (u) + (0.3,0.1) $) node {$u$};
\coordinate (A2) at ({9+4*cos((3*pi/5+pi/12) r)},{4*sin((3*pi/5+pi/12) r)});
\fill  (A2) circle (2.5pt);
\draw ($ (A2) + (-0.3,0.3) $) node {$A^2$};
\draw [thick] (u)--(A2);
\coordinate (A3) at (9-3.5,{sqrt(16-3.5^2)});
\fill  (A3) circle (2.5pt);
\draw ($ (A3) + (-0.4,0.3) $) node {$A^3$};
\coordinate (A4) at (9-3.5,{-sqrt(16-3.5^2)});
\fill  (A4) circle (2.5pt);
\draw ($ (A4) + (-0.5,-0.3) $) node {$A^4$};
\draw [thick] (A3)--(A4);
\draw[thick] (A2) to [bend right] (A3); 
\draw[thick] let \p1 = ($ 0.5*(A4) - 0.5*(A1) $),
\n1 = {veclen(\x1,\y1)}, \n3={atan(\y1/\x1)}
in (A1) arc (\n3:\n3-180:\n1);
\coordinate (B1) at ({9+4*cos((3*pi/5+4*pi/24) r)},{4*sin((3*pi/5+4*pi/24) r)});
\fill  (B1) circle (2.5pt);
\draw ($ (B1) + (0.1,-0.6) $) node {$B^1$};
\draw [thick] (u)--(B1);
\coordinate (B2) at ({9+4*cos((-pi/5+pi/10) r)},{4*sin((-pi/5+pi/10) r)});
\fill  (B2) circle (2.5pt);
\draw ($ (B2) + (-0.4,0.3) $) node {$B^2$};
\coordinate (B3) at ({9+4*cos((-pi/5-pi/10) r)},{4*sin((-pi/5-pi/10) r)});
\fill  (B3) circle (2.5pt);
\draw ($ (B3) + (-0.4,0.2) $) node {$B^3$};
\draw [thick] (B3)--(B2);
\coordinate (B4) at ({9+4*cos((pi/5+pi/10) r)},{4*sin((pi/5+pi/10) r)});
\fill  (B4) circle (2.5pt);
\draw ($ (B4) + (0.4,0.3) $) node {$B^4$};
\coordinate (B5) at ({9+4*cos((pi/5-pi/12) r)},{4*sin((pi/5-pi/12) r)});
\fill  (B5) circle (2.5pt);
\draw ($ (B5) + (0.4,0.4) $) node {$B^5$};
\draw [thick] (B4)--(B5);
%
\foreach \x in {0,1,...,4} {
\fill  ({18+4*cos((2*pi*\x/5+pi/5) r)},{4*sin((2*pi*\x/5+pi/5) r)}) circle (2.5pt);
\draw[thick] ({18+4*cos((2*pi*\x/5+pi/5) r)},{4*sin((2*pi*\x/5+pi/5) r)}) -- ({18+4*cos((2*pi*(\x+2)/5+pi/5) r)},{4*sin((2*pi*(\x+2)/5+pi/5) r)});
}
\draw (18,0) circle (4);
\coordinate (w) at  ({18+cos((pi/12) r)},{sin((pi/12) r)});
\fill  (w) circle (2.5pt);
\draw ($ (w) + (-0.4,0.1) $) node {$w$};
\coordinate (A1) at ({18+4*cos((pi/12) r)},{4*sin((pi/12) r)});
\fill  (A1) circle (2.5pt);
\draw ($ (A1) + (-0.5,-0.5) $) node {$A^1$};
\draw [thick] (w)--(A1);
\coordinate (u) at  ({18+1.7*cos((3*pi/5+pi/12) r)},{1.7*sin((3*pi/5+pi/12) r)});
\fill  (u) circle (2.5pt);
\draw ($ (u) + (0.3,0.1) $) node {$u$};
\coordinate (A2) at ({18+4*cos((3*pi/5+pi/12) r)},{4*sin((3*pi/5+pi/12) r)});
\fill  (A2) circle (2.5pt);
\draw ($ (A2) + (-0.3,0.3) $) node {$A^2$};
\draw [thick] (u)--(A2);
\coordinate (A3) at (18-3.5,{sqrt(16-3.5^2)});
\fill  (A3) circle (2.5pt);
\draw ($ (A3) + (-0.4,0.3) $) node {$A^3$};
\coordinate (A4) at (18-3.5,{-sqrt(16-3.5^2)});
\fill  (A4) circle (2.5pt);
\draw ($ (A4) + (-0.5,-0.3) $) node {$A^4$};
\draw [thick] (A3)--(A4);
\draw[thick] (A2) to [bend right] (A3); 
\draw[thick] let \p1 = ($ 0.5*(A4) - 0.5*(A1) $),
\n1 = {veclen(\x1,\y1)}, \n3={atan(\y1/\x1)}
in (A1) arc (\n3:\n3-180:\n1);
\coordinate (B1) at ({18+4*cos((3*pi/5-pi/12) r)},{4*sin((3*pi/5-pi/12) r)});
\fill  (B1) circle (2.5pt);
\draw ($ (B1) + (0.1,0.5) $) node {$B^1$};
\draw [thick] (u)--(B1);
\coordinate (B2) at ({18+4*cos((-3*pi/5+pi/10) r)},{4*sin((-3*pi/5+pi/10) r)});
\fill  (B2) circle (2.5pt);
\draw ($ (B2) + (0.2,0.5) $) node {$B^2$};
\coordinate (B3) at ({18+4*cos((-3*pi/5-pi/10) r)},{4*sin((-3*pi/5-pi/10) r)});
\fill  (B3) circle (2.5pt);
\draw ($ (B3) + (0.4,0.5) $) node {$B^3$};
\draw [thick] (B3)--(B2);
\coordinate (B5) at (18-3.2,{sqrt(16-3.2^2)});
\fill  (B5) circle (2.5pt);
\draw ($ (B5) + (0.6,-0.2) $) node {$B^5$};
\coordinate (B4) at (18-3.2,{-sqrt(16-3.2^2)});
\fill  (B4) circle (2.5pt);
\draw ($ (B4) + (0.6,0.3) $) node {$B^4$};
\draw [thick,dashed] (B4)--(B5);
\coordinate (B5') at (18-3.8,{sqrt(16-3.8^2)});
\fill  (B5') circle (2.5pt);
\coordinate (B4') at (18-3.8,{-sqrt(16-3.8^2)});
\fill  (B4') circle (2.5pt);
\draw [thick,dashed] (B5')--(B4');
\end{tikzpicture}
\caption{Adding the diagram of $uv$. Case 1.}
\label{figure:5uvw1}
\end{figure}
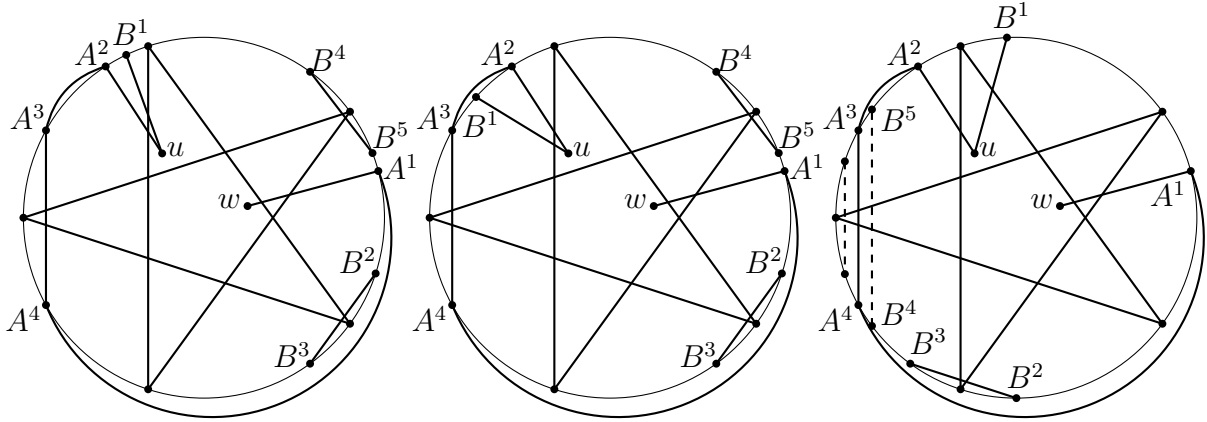

Similarly, adding the diagram of $uv$ to the diagram of $wu$ on the right in Figure~\ref{figure:diagrams5}, we get one of the diagrams of Figure \ref{figure:5uvw2}.

\begin{figure}[h]
\begin{tikzpicture}[scale=0.6]
\foreach \x in {0,1,...,4} {
\fill  ({4*cos((2*pi*\x/5+pi/5) r)},{4*sin((2*pi*\x/5+pi/5) r)}) circle (2.5pt);
\draw[thick] ({4*cos((2*pi*\x/5+pi/5) r)},{4*sin((2*pi*\x/5+pi/5) r)}) -- ({4*cos((2*pi*(\x+2)/5+pi/5) r)},{4*sin((2*pi*(\x+2)/5+pi/5) r)});
\fill  ({12+4*cos((2*pi*\x/5+pi/5) r)},{4*sin((2*pi*\x/5+pi/5) r)}) circle (2.5pt);
\draw[thick] ({12+4*cos((2*pi*\x/5+pi/5) r)},{4*sin((2*pi*\x/5+pi/5) r)}) -- ({12+4*cos((2*pi*(\x+2)/5+pi/5) r)},{4*sin((2*pi*(\x+2)/5+pi/5) r)});
}
\draw (0,0) circle (4);
\coordinate (w) at  ({cos((pi/12) r)},{sin((pi/12) r)});
\fill  (w) circle (2.5pt);
\draw ($ (w) + (-0.4,0.1) $) node {$w$};
\coordinate (A1) at ({4*cos((pi/12) r)},{4*sin((pi/12) r)});
\fill  (A1) circle (2.5pt);
\draw ($ (A1) + (0.5,0.1) $) node {$A^1$};
\draw [thick] (w)--(A1);
\coordinate (u) at  ({1.9*cos((3*pi/5+pi/12) r)},{1.9*sin((3*pi/5+pi/12) r)});
\fill  (u) circle (2.5pt);
\draw ($ (u) + (0.3,0.1) $) node {$u$};
\coordinate (A2) at ({-3.5},{-sqrt(16-3.5^2)});
\fill  (A2) circle (2.5pt);
\draw ($ (A2) + (-0.6,0.1) $) node {$A^2$};
\draw [thick] (u)--(A2);
\draw[thick] let \p1 = ($ 0.5*(A2) - 0.5*(A1) $),
\n1 = {veclen(\x1,\y1)}, \n3={atan(\y1/\x1)}
in (A1) arc (\n3:\n3-180:\n1);
\coordinate (B1) at ({4*cos((3*pi/5+pi/24) r)},{4*sin((3*pi/5+pi/24) r)});
\fill  (B1) circle (2.5pt);
\draw ($ (B1) + (0.1,0.5) $) node {$B^1$};
\draw [thick] (u)--(B1);
\coordinate (B2) at ({4*cos((-pi/5+pi/10) r)},{4*sin((-pi/5+pi/10) r)});
\fill  (B2) circle (2.5pt);
\draw ($ (B2) + (-0.4,0.3) $) node {$B^2$};
\coordinate (B3) at ({4*cos((-pi/5-pi/10) r)},{4*sin((-pi/5-pi/10) r)});
\fill  (B3) circle (2.5pt);
\draw ($ (B3) + (-0.4,0.2) $) node {$B^3$};
\draw [thick] (B3)--(B2);
\coordinate (B4) at ({4*cos((pi/5+pi/10) r)},{4*sin((pi/5+pi/10) r)});
\fill  (B4) circle (2.5pt);
\draw ($ (B4) + (0.4,0.3) $) node {$B^4$};
\coordinate (B5) at ({4*cos((pi/5-pi/12) r)},{4*sin((pi/5-pi/12) r)});
\fill  (B5) circle (2.5pt);
\draw ($ (B5) + (0.4,0.4) $) node {$B^5$};
\draw [thick] (B4)--(B5);
\draw (12,0) circle (4);
\coordinate (w) at  ({12+cos((pi/12) r)},{sin((pi/12) r)});
\fill  (w) circle (2.5pt);
\draw ($ (w) + (-0.4,0.1) $) node {$w$};
\coordinate (A1) at ({12+4*cos((pi/12) r)},{4*sin((pi/12) r)});
\fill  (A1) circle (2.5pt);
\draw ($ (A1) + (0.5,0.1) $) node {$A^1$};
\draw [thick] (w)--(A1);
\coordinate (u) at  ({12+1.9*cos((3*pi/5+pi/12) r)},{1.9*sin((3*pi/5+pi/12) r)});
\fill  (u) circle (2.5pt);
\draw ($ (u) + (0.3,0.1) $) node {$u$};
\coordinate (A2) at ({12-3.5},{-sqrt(16-3.5^2)});
\fill  (A2) circle (2.5pt);
\draw ($ (A2) + (-0.6,-0.1) $) node {$A^2$};
\draw [thick] (u)--(A2);
\draw[thick] let \p1 = ($ 0.5*(A2) - 0.5*(A1) $),
\n1 = {veclen(\x1,\y1)}, \n3={atan(\y1/\x1)}
in (A1) arc (\n3:\n3-180:\n1);
\coordinate (B1) at ({12+4*cos((3*pi/5-pi/12) r)},{4*sin((3*pi/5-pi/12) r)});
\fill  (B1) circle (2.5pt);
\draw ($ (B1) + (0.1,0.5) $) node {$B^1$};
\draw [thick] (u)--(B1);
\coordinate (B2) at ({12+4*cos((-3*pi/5+pi/10) r)},{4*sin((-3*pi/5+pi/10) r)});
\fill  (B2) circle (2.5pt);
\draw ($ (B2) + (0.2,0.5) $) node {$B^2$};
\coordinate (B3) at ({12+4*cos((-3*pi/5-pi/10) r)},{4*sin((-3*pi/5-pi/10) r)});
\fill  (B3) circle (2.5pt);
\draw ($ (B3) + (0.4,0.5) $) node {$B^3$};
\draw [thick] (B3)--(B2);
\coordinate (B5) at (12-3.8,{sqrt(16-3.8^2)});
\fill  (B5) circle (2.5pt);
\draw ($ (B5) + (-0.6,0.1) $) node {$B^5$};
\coordinate (B4) at (12-3.8,{-sqrt(16-3.8^2)});
\fill  (B4) circle (2.5pt);
\draw ($ (B4) + (-0.6,0.3) $) node {$B^4$};
\draw [thick] (B5)--(B4);
\end{tikzpicture}
\caption{Adding the diagram of $uv$. Case 2.}
\label{figure:5uvw2}
\end{figure}
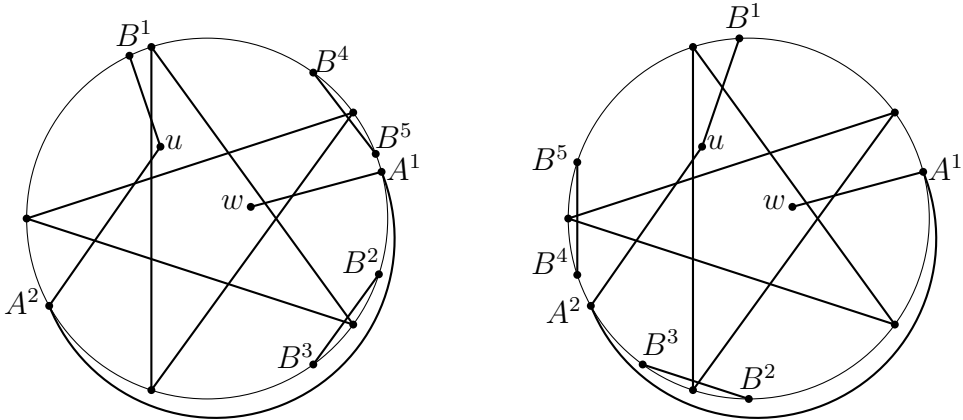

Similar to the above, in the diagram on the left in Figure~\ref{figure:5uvw2}, the point $B^1$ can only be connected to $B^4$, and then the only way to make a thrackle pass is again to join $B^4$ to $B^3$ inside $D$ going around the point $w$. But again this path cannot be deformed to one attached to a vertex of $\T(c)$. In the diagram on the right in Figure~\ref{figure:5uvw2}, the segment $B^2B^3$ is unreachable from the point $B^1$.

This completes the proof of the lemma for the case where $c$ is a 5-cycle. Now suppose that $c$ is a 7-cycle.
We start again from the edge $wu$. Fixing one of the domains labelled $2$ for $w$, there are, up to reflection, four choices of a domain labelled $1$ for $u$, as shown in the two drawings in Figure~\ref{figure:wu}. For each of the two choices for $u$ on the left in Figure~\ref{figure:wu}, edge removal may be performed on one of two edges drawn in bold; one chooses an edge that is not incident to $v$. This reduces the length of $c$ to five. For the two choices for $u$ on the right in Figure~\ref{figure:wu}, edge removal may not be possible, depending on which vertex of $\T(c)$ is $v$. It remains to treat these two possibilities.

\begin{figure}[h]
\begin{tikzpicture}[scale=0.65]
\foreach \x in {1,2} {
\fill  ({4*cos((2*pi*\x/7+pi/7) r)},{4*sin((2*pi*\x/7+pi/7) r)}) circle (2.5pt);
\draw[ultra thick] ({4*cos((2*pi*\x/7+pi/7) r)},{4*sin((2*pi*\x/7+pi/7) r)}) -- ({4*cos((2*pi*(\x+3)/7+pi/7) r)},{4*sin((2*pi*(\x+3)/7+pi/7) r)});
}
\foreach \x in {0,3,4,5,6} {
\fill  ({4*cos((2*pi*\x/7+pi/7) r)},{4*sin((2*pi*\x/7+pi/7) r)}) circle (2.5pt);
\draw[thick] ({4*cos((2*pi*\x/7+pi/7) r)},{4*sin((2*pi*\x/7+pi/7) r)}) -- ({4*cos((2*pi*(\x+3)/7+pi/7) r)},{4*sin((2*pi*(\x+3)/7+pi/7) r)});
}
\foreach \x in {0,1,...,6} {
\fill  ({12+4*cos((2*pi*\x/7+pi/7) r)},{4*sin((2*pi*\x/7+pi/7) r)}) circle (2.5pt);
\draw[thick] ({12+4*cos((2*pi*\x/7+pi/7) r)},{4*sin((2*pi*\x/7+pi/7) r)}) -- ({12+4*cos((2*pi*(\x+3)/7+pi/7) r)},{4*sin((2*pi*(\x+3)/7+pi/7) r)});
}
\coordinate (w) at  (1.1,0);
\fill  (w) circle (2.5pt);
\draw ($ (w) + (-0.45,0.1) $) node {$w$};
\coordinate (u) at  ({2*cos((pi/7) r)},{2*sin((pi/7) r)});
\fill  (u) circle (2.5pt);
\draw ($ (u) + (-0.3,0.1) $) node {$u$};
\coordinate (u) at  (-2,0);
\fill  (u) circle (2.5pt);
\draw ($ (u) + (-0.3,0.1) $) node {$u$};
\coordinate (w) at  (13.1,0);
\fill  (w) circle (2.5pt);
\draw ($ (w) + (-0.5,0.1) $) node {$w$};
\coordinate (u) at  ({12+2*cos((3*pi/7) r)},{2*sin((3*pi/7) r)});
\fill  (u) circle (2.5pt);
\draw ($ (u) + (-0.3,-0.2) $) node {$u$};
\coordinate (u) at  ({12+2*cos((5*pi/7) r)},{2*sin((5*pi/7) r)});
\fill  (u) circle (2.5pt);
\draw ($ (u) + (-0.3,0.1) $) node {$u$};
\end{tikzpicture}
\caption{Positions of $w$ and $u$.}
\label{figure:wu}
\end{figure}
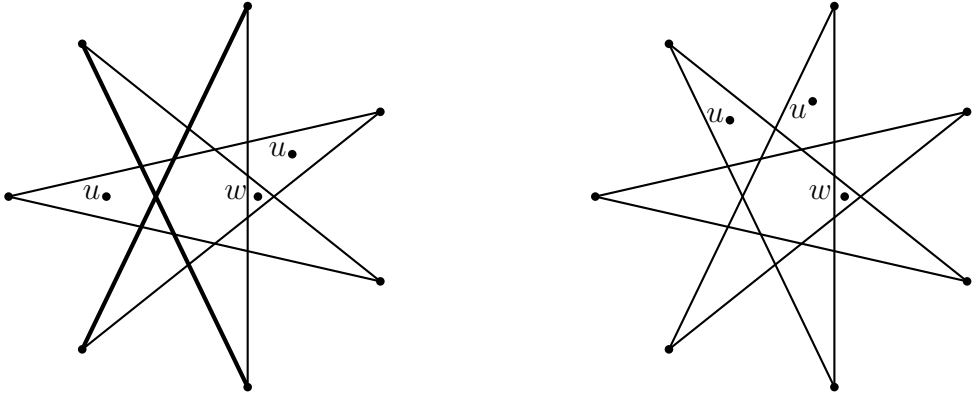

Our arguments will be similar to those above for the $5$-musquash. We place the vertices of $\T(c)$ at the $7^{\text{th}}$ roots of $-1$ and label the edges as in the proof of Lemma~\ref{lemma:o1o0}. We start with edge $wu$ and can assume that $w$ lies in the angle $\arg w \in (0, \pi/7)$. The set of edges of $\T(c)$ which the starting segment $wA^1, \; A^1 \in C$, of $wu$ crosses depends on the position of the point $A^1$ and is encoded in the vector $V(w)$. Similarly, the set of edges of $\T(c)$ crossed by $uA^2, \; A^2 \in C$, depends on the position of $A^2$ and is given by the vector $V(u)$. This time, $u$ belongs to one of the two fixed domains, as on the right in Figure~\ref{figure:wu}, and so no cyclic permutation in $V(u)$ is possible. From Table~\ref{table:VO} with $n=7$ and $s=1, 2$ we obtain the possible values for $V(w)$ and $V(u)$ shown in Table \ref{table:7wu}, where the two tables for $V(u)$ correspond to the two choices of a domain for $u$ and are already cyclically permuted accordingly.
\begin{table}[h]
\begin{center}
\begin{tabular}{|c|c|}
             \hline
              & $V(w)$ \\
             \hline
             1 & (0,0,1,0,0,0,1) \\
             2 & (1,0,1,0,0,0,0) \\
             3 & (1,0,1,0,1,1,0) \\
             4 & (1,0,0,1,1,1,0) \\
             5 & (0,1,0,1,1,1,0) \\
             6 & (0,1,0,1,1,0,1) \\
             7 & (0,1,0,0,0,0,1) \\
             \hline
\end{tabular}
\qquad
\begin{tabular}{|c|c|}
             \hline
              & $V(u)$ \\
             \hline
             $a$ & (0,0,0,0,1,0,0) \\
             $b$ & (0,0,0,0,0,1,0) \\
             $c$ & (0,0,1,1,0,1,0) \\
             $d$ & (1,1,1,1,0,1,0) \\
             $e$ & (1,1,1,1,0,0,1) \\
             $f$ & (1,1,1,0,1,0,1) \\
             $g$ & (1,0,0,0,1,0,1) \\
             \hline
\end{tabular}
\qquad
\begin{tabular}{|c|c|}
             \hline
              & $V(u)$ \\
             \hline
             $\alpha$ & (0,0,1,0,0,0,0) \\
             $\beta$ & (0,0,0,1,0,0,0) \\
             $\gamma$ & (1,1,0,1,0,0,0) \\
             $\delta$ & (1,1,0,1,0,1,1) \\
             $\varepsilon$ & (1,1,0,0,1,1,1) \\
             $\zeta$ & (1,0,1,0,1,1,1) \\
             $\eta$ & (0,0,1,0,1,1,0) \\
             \hline
\end{tabular}
\end{center}
\caption{\ }
\label{table:7wu}
\end{table}

Similar to the above, the sets of edges crossed by $wA^1$ and by $uA^2$ must be disjoint, and the complement to their union must be the union of even paths in $c$. Inspecting Table~\ref{table:7wu} we find that only the following four combinations are possible: $1b, 1\beta, 1\gamma, 5\alpha$. The corresponding diagrams are given in Figure~\ref{figure:diagrams7}, where in all the cases but one we also added the outside arcs, as they are determined uniquely by the diagrams.

\begin{figure}[h]
\begin{tikzpicture}[scale=0.7]
\foreach \x in {0,1,...,6} {
\fill  ({4*cos((2*pi*\x/7+pi/7) r)},{4*sin((2*pi*\x/7+pi/7) r)}) circle (2.5pt);
\draw[thick] ({4*cos((2*pi*\x/7+pi/7) r)},{4*sin((2*pi*\x/7+pi/7) r)}) -- ({4*cos((2*pi*(\x+3)/7+pi/7) r)},{4*sin((2*pi*(\x+3)/7+pi/7) r)});
\fill  ({12+4*cos((2*pi*\x/7+pi/7) r)},{4*sin((2*pi*\x/7+pi/7) r)}) circle (2.5pt);
\draw[thick] ({12+4*cos((2*pi*\x/7+pi/7) r)},{4*sin((2*pi*\x/7+pi/7) r)}) -- ({12+4*cos((2*pi*(\x+3)/7+pi/7) r)},{4*sin((2*pi*(\x+3)/7+pi/7) r)});
\fill  ({4*cos((2*pi*\x/7+pi/7) r)},{-10+4*sin((2*pi*\x/7+pi/7) r)}) circle (2.5pt);
\draw[thick] ({4*cos((2*pi*\x/7+pi/7) r)},{-10+4*sin((2*pi*\x/7+pi/7) r)}) -- ({4*cos((2*pi*(\x+3)/7+pi/7) r)},{-10+4*sin((2*pi*(\x+3)/7+pi/7) r)});
\fill  ({12+4*cos((2*pi*\x/7+pi/7) r)},{-10+4*sin((2*pi*\x/7+pi/7) r)}) circle (2.5pt);
\draw[thick] ({12+4*cos((2*pi*\x/7+pi/7) r)},{-10+4*sin((2*pi*\x/7+pi/7) r)}) -- ({12+4*cos((2*pi*(\x+3)/7+pi/7) r)},{-10+4*sin((2*pi*(\x+3)/7+pi/7) r)});
}
%
%
\draw (0,0) circle (4);
\coordinate (w) at  (1.1,0.1); \fill  (w) circle (2.5pt); \draw ($ (w) + (-0.45,0.1) $) node {$w$};
\coordinate (A1) at  ({1.1*4/(sqrt(1.1^2+0.1^2))},{0.1*4/(sqrt(1.1^2+0.1^2))}); \fill  (A1) circle (2.5pt); \draw ($ (A1) + (0.5,0.1) $) node {$A^1$};
\draw[thick] (w)--(A1);
\coordinate (u) at  ({1.6*cos((3*pi/7+pi/14) r)},{1.6*sin((3*pi/7+pi/14) r)}); \fill  (u) circle (2.5pt); \draw ($ (u) + (0.3,-0.2) $) node {$u$};
\coordinate (A2) at  ({4*cos((3*pi/7+pi/14) r)},{4*sin((3*pi/7+pi/14) r)}); \fill  (A2) circle (2.5pt); \draw ($ (A2) + (-0.1,0.4) $) node {$A^2$};
\draw[thick] (u)--(A2);
\coordinate (A3) at  ({4*cos((-3*pi/7+pi/10) r)},{4*sin((-3*pi/7+pi/10) r)}); \fill  (A3) circle (2.5pt); \draw ($ (A3) + (-0.1,-0.4) $) node {$A^3$};
\coordinate (A4) at  ({4*cos((-3*pi/7-pi/10) r)},{4*sin((-3*pi/7-pi/10) r)}); \fill  (A4) circle (2.5pt); \draw ($ (A4) + (-0.1,-0.4) $) node {$A^4$};
\draw[thick] (A3)--(A4);
\coordinate (A5) at  ({-4*cos((pi/10) r)},{-4*sin((pi/10) r)}); \fill  (A5) circle (2.5pt); \draw ($ (A5) + (-0.4,0.1) $) node {$A^5$};
\coordinate (A6) at  ({-4*cos((pi/10) r)},{4*sin((pi/10) r)}); \fill  (A6) circle (2.5pt); \draw ($ (A6) + (-0.4,0.1) $) node {$A^6$};
\draw[thick] (A5)--(A6);
\draw[thick] (A1) to [bend left=50] (A3);
\draw[thick] (A4) to [bend left=50] (A5);
\draw[thick] (A6) to [bend left=50] (A2);
%
%
\draw (12,0) circle (4);
\coordinate (w) at  (13.1,0.1); \fill  (w) circle (2.5pt); \draw ($ (w) + (-0.5,0.1) $) node {$w$};
\coordinate (A1) at  ({12+1.1*4/(sqrt(1.1^2+0.1^2))},{0.1*4/(sqrt(1.1^2+0.1^2))}); \fill  (A1) circle (2.5pt); \draw ($ (A1) + (0.5,0.1) $) node {$A^1$};
\draw[thick] (w)--(A1);
\coordinate (u) at  ({12+1.6*cos((5*pi/7+pi/14) r)},{1.6*sin((5*pi/7+pi/14) r)}); \fill  (u) circle (2.5pt); \draw ($ (u) + (0.1,0.3) $) node {$u$};
\coordinate (A2) at  ({12+4*cos((5*pi/7+pi/14) r)},{4*sin((5*pi/7+pi/14) r)}); \fill  (A2) circle (2.5pt); \draw ($ (A2) + (-0.1,0.4) $) node {$A^2$};
\draw[thick] (u)--(A2);
\coordinate (A3) at  ({12+4*cos((3*pi/7+pi/10) r)},{4*sin((3*pi/7+pi/10) r)}); \fill  (A3) circle (2.5pt); \draw ($ (A3) + (-0.1,-0.4) $) node {$A^3$};
\coordinate (A4) at  ({12+4*cos((3*pi/7-pi/10) r)},{4*sin((3*pi/7-pi/10) r)}); \fill  (A4) circle (2.5pt); \draw ($ (A4) + (-0.1,-0.4) $) node {$A^4$};
\draw[thick] (A3)--(A4);
\coordinate (A5) at  ({12-4*cos((pi/10) r)},{-4*sin((pi/10) r)}); \fill  (A5) circle (2.5pt); \draw ($ (A5) + (-0.4,0.1) $) node {$A^5$};
\coordinate (A6) at  ({12-4*cos((pi/10) r)},{4*sin((pi/10) r)}); \fill  (A6) circle (2.5pt); \draw ($ (A6) + (-0.4,0.1) $) node {$A^6$};
\draw[thick] (A5)--(A6);
\coordinate (v') at  ({12+4*cos((3*pi/7) r)},{4*sin((3*pi/7) r)}); \draw ($ (v') + (0.3,0.3) $) node {$v'$};
%
%
\draw (0,-10) circle (4);
\coordinate (w) at  (1.1,-10+0.1); \fill  (w) circle (2.5pt); \draw ($ (w) + (-0.45,0.1) $) node {$w$};
\coordinate (A1) at  ({1.1*4/(sqrt(1.1^2+0.1^2))},{-10+0.1*4/(sqrt(1.1^2+0.1^2))}); \fill  (A1) circle (2.5pt); \draw ($ (A1) + (0.5,0.1) $) node {$A^1$};
\draw[thick] (w)--(A1);
\coordinate (u) at  ({1.6*cos((5*pi/7+pi/14) r)},{-10+1.6*sin((5*pi/7+pi/14) r)}); \fill  (u) circle (2.5pt); \draw ($ (u) + (0.1,0.3) $) node {$u$};
\coordinate (A2) at  ({-4*cos((pi/10) r)},{-10-4*sin((pi/10) r)}); \fill  (A2) circle (2.5pt); \draw ($ (A2) + (-0.4,-0.5) $) node {$A^2$};
\draw[thick] (u)--(A2);
\coordinate (A3) at  ({4*cos((3*pi/7+pi/10) r)},{-10+4*sin((3*pi/7+pi/10) r)}); \fill  (A3) circle (2.5pt); \draw ($ (A3) + (-0.1,-0.4) $) node {$A^3$};
\coordinate (A4) at  ({4*cos((3*pi/7-pi/10) r)},{-10+4*sin((3*pi/7-pi/10) r)}); \fill  (A4) circle (2.5pt); \draw ($ (A4) + (-0.1,-0.4) $) node {$A^4$};
\draw[thick] (A3)--(A4);
\draw[thick] (A1) to [bend right=40] (A4);
\draw[thick] (A3) to [bend right=70] (A2);
%
%
\draw (12,-10) circle (4);
\coordinate (w) at  (12+1.1,-10+0.1); \fill  (w) circle (2.5pt); \draw ($ (w) + (-0.45,0.1) $) node {$w$};
\coordinate (A1) at ({12-4*cos((pi/10) r)},{-10-4*sin((pi/10) r)}); \fill  (A1) circle (2.5pt); \draw ($ (A1) + (-0.5,0.1) $) node {$A^1$};
\draw[thick] (w)--(A1);
\coordinate (u) at  ({12+1.6*cos((5*pi/7-pi/14) r)},{-10+1.6*sin((5*pi/7-pi/14) r)}); \fill  (u) circle (2.5pt); \draw ($ (u) + (-0.3,-0.2) $) node {$u$};
\coordinate (A2) at  ({12+4*cos((5*pi/7-pi/14) r)},{-10+4*sin((5*pi/7-pi/14) r)}); \fill  (A2) circle (2.5pt); \draw ($ (A2) + (-0.1,0.4) $) node {$A^2$};
\draw[thick] (u)--(A2);
\coordinate (A3) at  ({12+4*cos((pi/7+pi/10) r)},{-10+4*sin((pi/7+pi/10) r)}); \fill  (A3) circle (2.5pt); \draw ($ (A3) + (-0.5,-0.4) $) node {$A^3$};
\coordinate (A4) at  ({12+4*cos((pi/7-pi/10) r)},{-10+4*sin((pi/7-pi/10) r)}); \fill  (A4) circle (2.5pt); \draw ($ (A4) + (-0.4,-0.3) $) node {$A^4$};
\draw[thick] (A3)--(A4);
\draw[thick] let \p1 = ($ 0.5*(A4) - 0.5*(A1) $), \n1 = {veclen(\x1,\y1)}, \n3={atan(\y1/\x1)} in (A4) arc (\n3:\n3-180:\n1);
\draw[thick] (A3) to [bend right=50] (A2);
\end{tikzpicture}
\caption{Diagrams of $wu$ for the $7$-musquash: $1b$ (top left), $1\beta$ (top right), $1\gamma$ (bottom left) and $5\alpha$ (bottom right).}
\label{figure:diagrams7}
\end{figure}
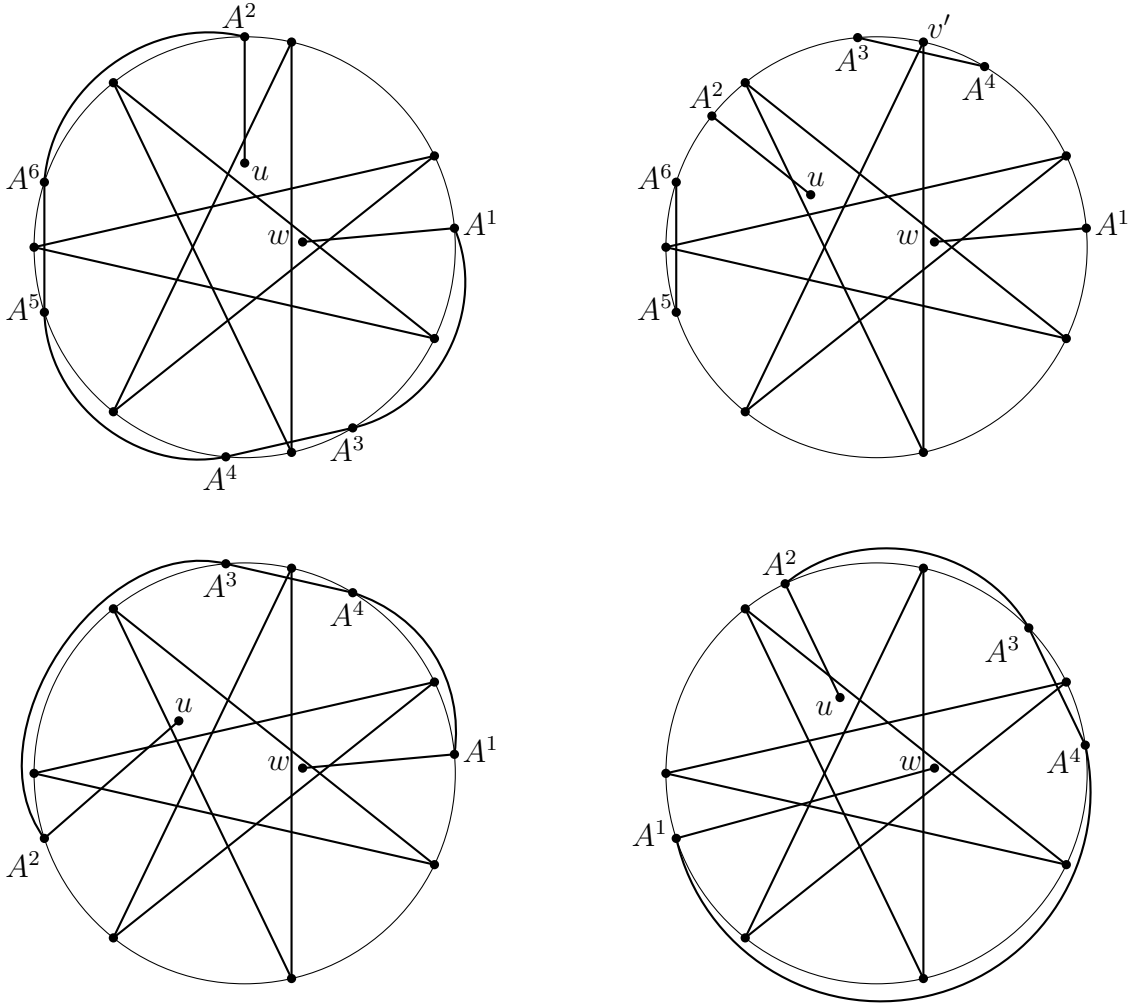

We now consider the edge $uv$. As in the case of the $5$-musquash, we slightly perturb the drawing in a neighbourhood of $v$ to move $v$ into the outer domain. Then the first two edges of the musquash $\T(c)$ which $uv$ crosses, counting from $v$, must be consecutive edges of $\T(c)$ and the starting segment $uB^1, \; B^1 \in C$, of the diagram of $uv$ is such that the complement to the set of edges of $\T(c)$ which it crosses is a union of even paths. Consulting Table~\ref{table:7wu} we find that this only happens in the cases $a, b, e, \alpha, \beta$ and $\varepsilon$. The first three apply to the top left diagram in Figure~\ref{figure:diagrams7}. The choices of $a$, of $e$ and of $b$, in the case when $uB^1$ goes to the left of $uA^1$, result in some of the edges of $\T(c)$ becoming unreachable for $uv$. With the choice of $b$, if $uB^1$ goes to the right of $uA^2$, it can be completed to a thrackle path, but it violates the condition that the last two crossings on $uv$ counting from $u$ occur with the adjacent edges of $\T(c)$. The cases $\alpha, \beta$ and $\varepsilon$ apply to the other three diagrams in Figure~\ref{figure:diagrams7}. For the bottom right diagram, the choices of $\beta, \varepsilon$ and of $\alpha$, when $uB^1$ goes to the left of $uA^2$, again result in unreachable edges. If we choose $\alpha$ with $uB^1$ going to the right of $uA^2$, then the diagram for $uv$ contains a small straight line segment with the endpoints on $C$ in a neighbourhood of the vertex of $\T(c)$ next to $A^1$ in the negative direction, which cannot be reached. Similarly, for the bottom left diagram, the choices of $\varepsilon$ and $\alpha$ result in unreachable edges. If we choose $\beta$, then considering the diagram for $uv$ we find that it can be completed to a thrackled path, but the condition that the last two crossings on $uv$ counting from $u$ occur with the adjacent edges of $\T(c)$ is violated.

The only remaining diagram is the one on the top right in Figure~\ref{figure:diagrams7}. We can again use the idea of edge removal. By Lemma~\ref{lemma:triangle}, the points $u$ and $w$ prohibit edge removal on all the edges except for the two adjacent to the vertex $v'$. It follows that, unless the common vertex of the cycle $c$ and the two-path $p$ is $v'$, we can perform the edge removal operation, which leads to the case of the $5$-musquash that has already been considered. It only remains to consider the case when $c$ and $p$ share the vertex $v'$ (or equivalently, after perturbing it, the diagram of the resulting path $uv$ contains a small straight line segment with the endpoints on $C$ in a neighbourhood of $v'$). This immediately prohibits the choice of $\varepsilon$ and also the choice of $\alpha$ for the starting segment $uB^1$, since the resulting diagram for $uv$ does not contain the required segment. For $\beta$ we have two possibilities: the starting segment $uB^1$ can go to the left or to the right of $uA^2$. Suppose it goes to the left, so that $B^1$ lies on $C$ between $A^2$ and $A^6$. Then $wu$ cannot contain $A^2A^6$ as one of its outside arcs, which means that the outside arcs are $A^1A^5, A^6A^4$ and $A^3A^2$, which also leads to a contradiction: $uB^1$ cannot be extended. A similar argument also works when $uB^1$ goes the right $uA^2$ (so that $B^1$ lies on $C$ between $A^2$ and the vertex of $\T(c)$ next to $A^2$ in the negative direction). In that case $wu$ cannot contain an outside arc $A^2A^3$, and so the outside arcs are $A^1A^4, A^3A^5$ and $A^6A^2$. Then, again, $uB^1$ cannot be extended to the correct diagram of $uv$.
\end{proof}

\begin{proof}[Proof of Lemma~\ref{lemma:no55}]  Let $G$ be a figure-eight graph comprised of two five-cycles $c_1,c_2$ sharing a vertex $v$, and assume that there exists a thrackle drawing $\T(G)$. Any thrackle drawing of a five-cycle is a standard musquash, and by \cite[Lemma~2.2]{LPS97}, the drawings of $c_1$ and $c_2$ cross at $v$. Fix a drawing of $c_1$, with the vertices $v,1,2,3,4$  at the vertices of a regular pentagon, with $v$ at the top, as shown in Figure \ref{figure:19}. We label $v,v_1,v_2,v_3,v_4$ the vertices of $c_2$ in consecutive order in such a way that the starting segment of $\T(vv_1)$ lies in the outer domain of the complement to $\T(c_1)$.

Label $0, 1, 2$ the domains of the complement of $\T(c_1)$ as in Section~\ref{section:th}. Then by Lemma~\ref{lemma:1edge} and Lemma~\ref{lemma:2path}, the vertices $v_2$ and $v_4$ of $\T(c_2)$ lie in the outer domain of $\T(c_1)$, and the vertices $v_1$ and $v_3$ lie in domains labelled by $1$.

If edge removal on one of the edges of $G$ is possible, we immediately arrive at a contradiction, as the resulting drawing is a thrackled figure-eight graph comprised of a three-cycle and a five-cycle, which can be further reduced by edge removal to a figure-eight graph consisting of two three-cycles, as explained at the end of Section~\ref{section:th}. The arguments which we used there (based on Lemma~\ref{lemma:triangle}) show that, up to a reflection, there is only two possible positions of the vertices $v_1$, $v_3$ for which no edges of $\T(c_1)$ can be removed, as in Figure~\ref{figure:18}.

\begin{figure}[h]
\begin{tikzpicture}[scale=0.6]
\foreach \x in {0,1,...,4} {
\fill  ({4*cos((2*pi*\x/5+pi/2) r)},{4*sin((2*pi*\x/5+pi/2) r)}) circle (2.5pt);
\draw[thick] ({4*cos((2*pi*\x/5+pi/2) r)},{4*sin((2*pi*\x/5+pi/2) r)}) -- ({4*cos((2*pi*(\x+2)/5+pi/2) r)},{4*sin((2*pi*(\x+2)/5+pi/2) r)});
\fill  ({10+4*cos((2*pi*\x/5+pi/2) r)},{4*sin((2*pi*\x/5+pi/2) r)}) circle (2.5pt);
\draw[thick] ({10+4*cos((2*pi*\x/5+pi/2) r)},{4*sin((2*pi*\x/5+pi/2) r)}) -- ({10+4*cos((2*pi*(\x+2)/5+pi/2) r)},{4*sin((2*pi*(\x+2)/5+pi/2) r)});
}
\fill  ({2*cos((2*pi/5+pi/2) r)},{2*sin((2*pi/5+pi/2) r)}) circle (2.5pt);
\fill  ({2*cos((-2*pi/5+pi/2) r)},{2*sin((-2*pi/5+pi/2) r)}) circle (2.5pt);
\fill  ({10+2*cos((-2*pi*2/5+pi/2) r)},{2*sin((-2*pi*2/5+pi/2) r)}) circle (2.5pt);
\fill  ({10+2*cos((-2*pi/5+pi/2) r)},{2*sin((-2*pi/5+pi/2) r)}) circle (2.5pt);
\draw (.5,3.85) node {$v$};
\draw (10.5,3.85) node {$v$};
\end{tikzpicture}
\caption{Two possible positions of the vertices $v_1$ and $v_3$ relative to $v$.}
\label{figure:18}
\end{figure}
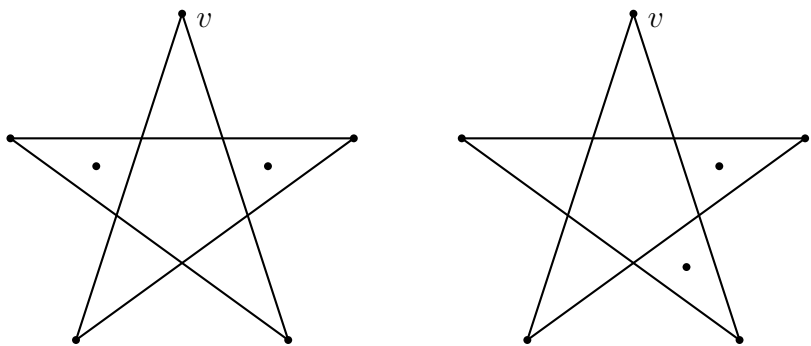

We first reduce the case on the left in Figure~\ref{figure:18} to the case on the right.
Assume that the vertex $v_1$ lies either in the upper-left or in the upper-right domain labelled $1$ of the drawing $\T(c_1)$. Up to isotopy, and a Reidemeister move of the third kind on the triple of edges $12, 34, vv_4$, there is only one possibility for the edges $vv_4$ and $vv_1$ (Figure~\ref{figure:19}).

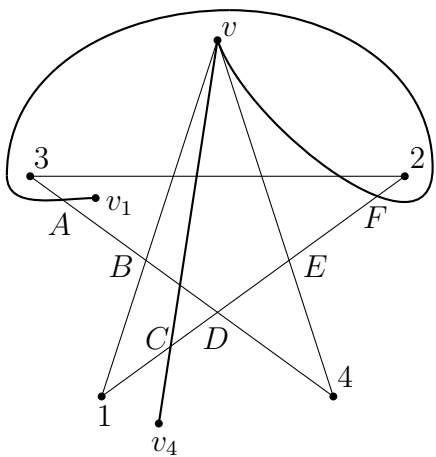
\begin{figure}[h]
\begin{tikzpicture}
\draw (4.46,-1.04)-- (6,3.7);
\draw (6,3.7)-- (7.54,-1.04);
\draw (7.54,-1.04)-- (3.51,1.89);
\draw (3.51,1.89)-- (8.49,1.89);
\draw (8.49,1.89)-- (4.46,-1.04);
\draw [thick] (6,3.7)-- (5.22,-1.4);
\fill (4.46,-1.04) circle (1.5pt);
\draw (4.5,-1.3) node {$1$};
\fill (7.54,-1.04) circle (1.5pt);
\draw (7.7,-0.78) node {$4$};
\draw (8.1,1.35) node {$F$};
\draw (7.3,0.7) node {$E$};
\fill (8.49,1.89) circle (1.5pt);
\draw (8.66,2.14) node {$2$};
\fill (6,3.7) circle (1.5pt);
\draw (6.16,3.85) node {$v$};
\draw (5.98,-0.26) node {$D$};
\draw (5.2,-0.26) node {$C$};
\draw  (4.72,0.7)  node {$B$};
\draw  (3.9,1.25)  node {$A$};
\fill (3.51,1.89) circle (1.5pt);
\draw (3.66,2.14) node {$3$};
\fill  (5.22,-1.4) circle (1.5pt);
\draw (5.3,-1.7) node {$v_4$};
\fill  (4.38,1.6) circle (1.5pt);
\draw (4.7,1.5) node {$v_1$};
\draw[thick] (4.38,1.6) to[out=180,in=-90] (3.2,1.9);
\draw[thick] (3.2,1.9) to[out=90,in=-180] (6.16,4.1);
\draw[thick] (6.16,4.1) to[out=0,in=90] (8.86,2);
\draw[thick] (8.86,2) to[out=-90,in=-70] (6,3.7);
\end{tikzpicture}
\caption{The musquash $\T(c_1)$ with two edges of $\T(c_2)$.}
\label{figure:19}
\end{figure}

We want to add the edge $v_4v_3$ to the drawing in Figure~\ref{figure:19}. The first crossing on $v_4v_3$ counting from $v_4$ cannot lie on the segment $B1$ (the edge $v5$ would be unreachable), on the segment $4E$ (the edge $v2$ would be unreachable), on the segment $FA$ of $vv_1$ (the edge $v4$ would be unreachable), on the segment $EF$ ($v1$ and $v4$ would not both be reachable), and on the segment $AB$ ($v1$ and $v4$ would not both be reachable). So the first crossing on  $ v_4v_3$ occurs on one of the segments $1C$, $CD$ or $D4$. Furthermore, the vertices $v_1$, $v_3$ can only lie in the domains shown in Figure~\ref{figure:18}, which gives (up to isotopy and Reidemeister moves) two drawings shown in Figure~\ref{figure:20}.

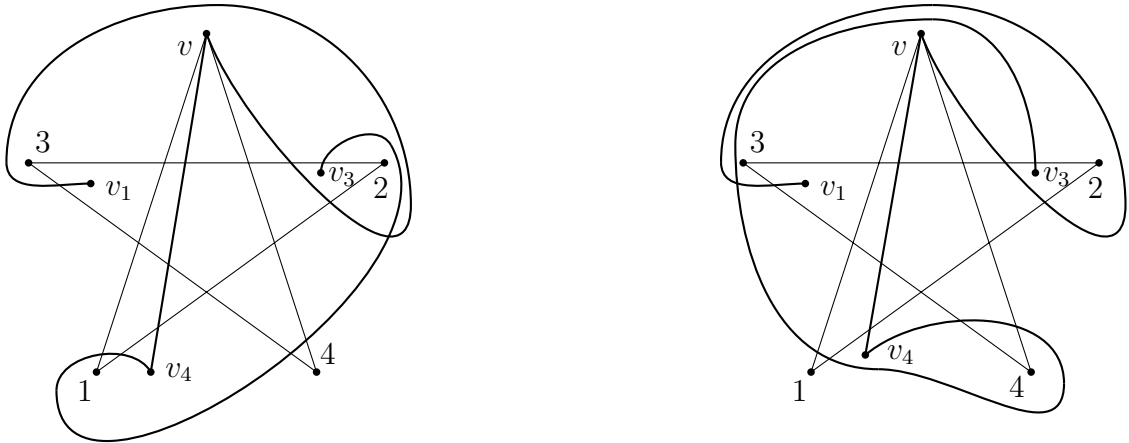
\begin{figure}[h]
\begin{tikzpicture}[scale=0.95]
%
%
\coordinate (v) at ({6},3.7); \fill (v) circle (1.5pt); \draw ($(v)+(-0.3,-0.2)$) node {$v$};
\coordinate (A2) at ({4.46},-1.04); \fill (A2) circle (1.5pt); \draw ($(A2)+(-0.16,-0.26)$) node {$1$};
\coordinate (A3) at ({8.49},1.89); \fill (A3) circle (1.5pt); \draw ($(A3)+(-0.05,-0.35)$) node {$2$};
\coordinate (A4) at ({3.51},1.89); \fill (A4) circle (1.5pt); \draw ($(A4)+(0.2,0.3)$) node {$3$};
\coordinate (A5) at ({7.54},-1.04); \fill (A5) circle (1.5pt); \draw ($(A5)+(0.16,0.26)$) node {$4$};
\coordinate (v1) at ({4.38},1.6); \fill (v1) circle (1.5pt); \draw ($(v1)+(0.4,-0.1)$) node {$v_1$};
\coordinate (v3) at ({7.6},1.75); \fill (v3) circle (1.5pt); \draw ($(v3)+(0.3,-0.05)$) node {$v_3$};
\coordinate (v4) at ({5.22},-1.04); \fill (v4) circle (1.5pt); \draw ($(v4)+(0.4,0)$) node {$v_4$};
\draw (A2)--(v)--(A5)--(A4)--(A3)--(A2);
\draw[thick] (v)-- (v4);
\draw[thick] (v1) to[out=180,in=-90] (3.2,1.9);
\draw[thick] (3.2,1.9) to[out=90,in=-180] (6.16,4.1);
\draw[thick] (6.16,4.1) to[out=0,in=90] (8.86,1.3);
\draw[thick] (8.86,1.3) to[out=-90,in=-70] (v);
\draw[thick] (v4) to[out=120,in=90] (3.9,-1.3);
\draw[thick] (3.9,-1.3) to[out=-90,in=-60] (8.58,2.14);
\draw[thick] (8.58,2.14) to[out=120,in=90] (v3);
%
%
\coordinate (v) at ({6+10},3.7); \fill (v) circle (1.5pt); \draw ($(v)+(-0.3,-0.2)$) node {$v$};
\coordinate (A2) at ({4.46+10},-1.04); \fill (A2) circle (1.5pt); \draw ($(A2)+(-0.16,-0.26)$) node {$1$};
\coordinate (A3) at ({8.49+10},1.89); \fill (A3) circle (1.5pt); \draw ($(A3)+(-0.05,-0.35)$) node {$2$};
\coordinate (A4) at ({3.51+10},1.89); \fill (A4) circle (1.5pt); \draw ($(A4)+(0.2,0.3)$) node {$3$};
\coordinate (A5) at ({7.54+10},-1.04); \fill (A5) circle (1.5pt); \draw ($(A5)+(-0.2,-0.2)$) node {$4$};
\coordinate (v1) at ({4.38+10},1.6); \fill (v1) circle (1.5pt); \draw ($(v1)+(0.4,-0.1)$) node {$v_1$};
\coordinate (v3) at ({7.6+10},1.75); \fill (v3) circle (1.5pt); \draw ($(v3)+(0.3,-0.05)$) node {$v_3$};
\coordinate (v4) at ({5.22+10},-0.8); \fill (v4) circle (1.5pt); \draw ($(v4)+(0.5,0)$) node {$v_4$};
\draw (A2)--(v)--(A5)--(A4)--(A3)--(A2);
\draw[thick] (v)-- (v4);
\draw[thick] (14.38,1.6) to[out=180,in=-90] (13.2,1.9);
\draw[thick] (13.2,1.9) to[out=90,in=-180] (16.16,4.1);
\draw[thick] (16.16,4.1) to[out=0,in=90] (18.86,1.3);
\draw[thick] (18.86,1.3) to[out=-90,in=-70] (16,3.7);
\draw[thick] (15.22,-0.8) to[out=40,in=90] (18,-1.2);
\draw[thick] (18,-1.2) to[out=-90,in=0] (15.4,-1);
\draw[thick] (15.4,-1) to[out=180,in=-90] (13.4,2.14);
\draw[thick] (13.4,2.14) to[out=90,in=180] (16.16,3.9);
\draw[thick] (16.16,3.9) to[out=0,in=90] (17.6,1.75);
\coordinate (SW) at (current bounding box.south west);
\coordinate (NE) at (current bounding box.north east);
\pgfresetboundingbox
\path[use as bounding box] ($(SW) +(0,1.5)$) -- ($(NE) +(0,0)$);
\end{tikzpicture}
\caption{The musquash $\T(c_1)$ with three edges of $\T(c_2)$.}
\label{figure:20}
\end{figure}

Consider the triangular domain $\triangle$ defined by the edge $vv_4$ of the musquash $\T(c_2)$. In both cases of Figure~\ref{figure:20}, $\triangle$ contains exactly one vertex of $c_1$, the vertex $1$ adjacent to $v$, and the starting segment of the edge $v1$ of $\T(c_1)$ lies in the outer domain relative to $\T(c_2)$.
It follows that interchanging the cycles $c_1$, $c_2$ if necessary, we can always assume that the vertex $v_1$ lies in one of the triangular domains of $\T(c_1)$ defined by an edge of $c_1$ incident to $v$. This gives the case on the right in Figure~\ref{figure:18}, and what is more, we now know that $v_1$ lies in the lower-right domain of the complement to $\T(c_1)$, and $v_3$, in the upper-right domain.

Up to Reidemeister moves, there are two ways to attach the edges $vv_1$, $vv_4$ to $\T(c_1)$ such that $v_1$ lies in the correct domain (Figure~\ref{figure:21}), and then there are three ways to further attach the edge $v_4v_3$ in such a manner that $v_3$ lies in the correct domain, as in Figure~\ref{figure:22}.

\begin{figure}[h]
\begin{minipage}[b]{6cm}
\begin{tikzpicture}[scale=0.9]
\draw (4.46,-1.04)-- (6,3.7);
\draw (6,3.7)-- (7.54,-1.04);
\draw (7.54,-1.04)-- (3.51,1.89);
\draw (3.51,1.89)-- (8.49,1.89);
\draw (8.49,1.89)-- (4.46,-1.04);
\draw[thick] (6,3.7)-- (5.22,-0.8);
\fill (4.46,-1.04) circle (1.5pt);
\fill (7.54,-1.04) circle (1.5pt);
\fill (8.49,1.89) circle (1.5pt);
\fill (6,3.7) circle (1.5pt);
\draw (6.16,3.85) node {$v$};
\fill (3.51,1.89) circle (1.5pt);
\fill  (5.22,-0.8) circle (1.5pt);
\draw (5.62,-0.85) node {$v_4$};
\fill (6.98,-0.4) circle (1.5pt);
\draw (7,-0.14) node {$v_1$};
\draw[ thick] (6,3.7) to[out=-60,in=90] (7.96,-1.3);
\draw[ thick] (7.96,-1.3) to[out=-90,in=-90] (6.98,-0.4);
\end{tikzpicture}
\end{minipage}
\hspace{1.0cm}
\begin{minipage}[b]{6cm}
\begin{tikzpicture}[scale=0.9]
\draw (4.46,-1.04)-- (6,3.7);
\draw (6,3.7)-- (7.54,-1.04);
\draw (7.54,-1.04)-- (3.51,1.89);
\draw (3.51,1.89)-- (8.49,1.89);
\draw (8.49,1.89)-- (4.46,-1.04);
\fill (4.46,-1.04) circle (1.5pt);
\fill (7.54,-1.04) circle (1.5pt);
\fill (8.49,1.89) circle (1.5pt);
\fill (6,3.7) circle (1.5pt);
\draw (6.26,3.65) node {$v$};
\fill (3.51,1.89) circle (1.5pt);
\fill  (7.12,-0.94) circle (1.5pt);
\draw (7.2,-1.2) node {$v_4$};
\fill  (6.52,0.02) circle (1.5pt);
\draw (6.6,0.28) node {$v_1$};
\draw[thick] (6,3.7)-- (7.12,-0.94);
\draw[ thick] (6,3.7) to[out=-60,in=90] (7.96,-1.4);
\draw[ thick] (7.96,-1.4) to[out=-90,in=0] (7.2,-1.4);
\draw[ thick] (7.2,-1.4) to[out=180,in=-90] (6.52,0.02);
\end{tikzpicture}
\end{minipage}
\caption{The musquash $\T(c_1)$ with two edges of $\T(c_2)$.}
\label{figure:21}
\end{figure}
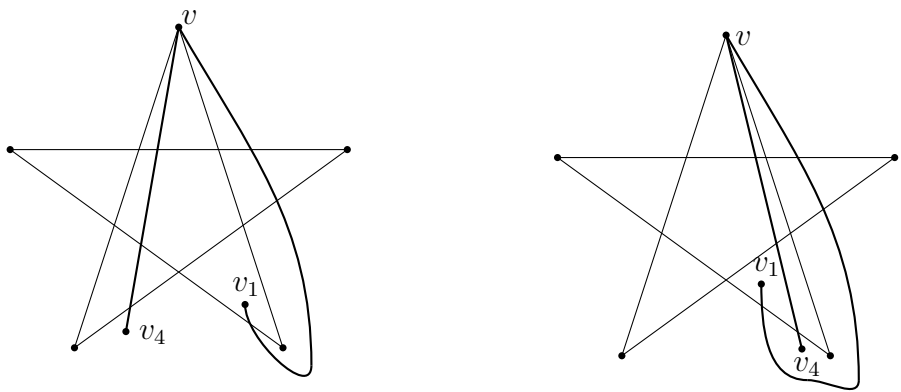


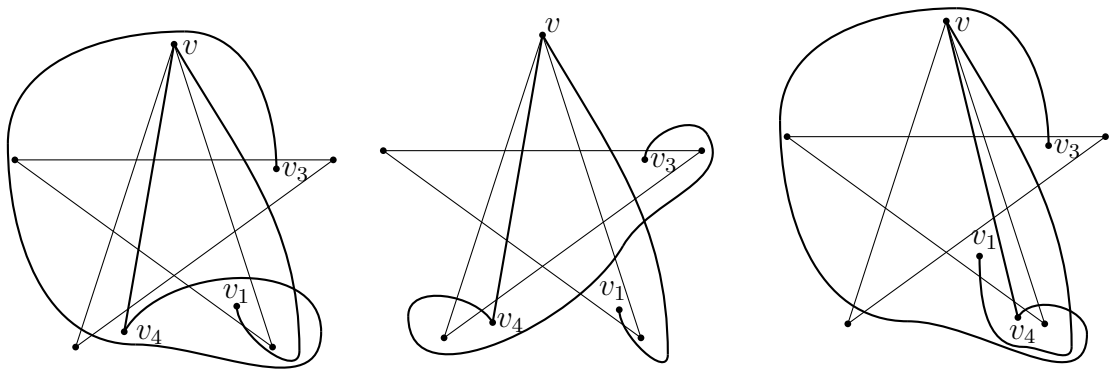
\begin{figure}[h]
\centering
\begin{tikzpicture}[scale=0.85]
\draw (4.46,-1.04)-- (6,3.7);
\draw (6,3.7)-- (7.54,-1.04);
\draw (7.54,-1.04)-- (3.51,1.89);
\draw (3.51,1.89)-- (8.49,1.89);
\draw (8.49,1.89)-- (4.46,-1.04);
\draw [thick](6,3.7)-- (5.22,-0.8);
\fill (4.46,-1.04) circle (1.5pt);
\fill (7.54,-1.04) circle (1.5pt);
\fill (8.49,1.89) circle (1.5pt);
\fill (6,3.7) circle (1.5pt);
\draw (6.26,3.65) node {$v$};
\fill (3.51,1.89) circle (1.5pt);
\fill  (5.22,-0.8) circle (1.5pt);
\draw (5.62,-0.85) node {$v_4$};
\fill  (6.98,-0.4) circle (1.5pt);
\draw (6.98,-0.2) node {$v_1$};
\fill (7.6,1.75) circle (1.5pt);
\draw (7.9,1.7) node {$v_3$};
\draw[thick] (5.22,-0.8) to[out=60,in=90] (8.3,-0.8);
\draw[thick] (8.3,-0.8) to[out=-90,in=0] (5.4,-1);
\draw[thick] (5.4,-1) to[out=180,in=-90] (3.4,2.14);
\draw[thick] (3.4,2.14) to[out=90,in=180] (6.16,3.9);
\draw[thick] (6.16,3.9) to[out=0,in=90] (7.6,1.75);
\draw[ thick] (6,3.7) to[out=-60,in=90] (7.96,-1.1);
\draw[ thick] (7.96,-1.1) to[out=-90,in=-90] (6.98,-0.4);
\end{tikzpicture}
\hspace{0.3cm}
\begin{tikzpicture}[scale=0.85]
\draw (4.46,-1.04)-- (6,3.7);
\draw (6,3.7)-- (7.54,-1.04);
\draw (7.54,-1.04)-- (3.51,1.89);
\draw (3.51,1.89)-- (8.49,1.89);
\draw (8.49,1.89)-- (4.46,-1.04);
\draw [thick] (6,3.7)-- (5.22,-0.8);
\fill (4.46,-1.04) circle (1.5pt);
\fill (7.54,-1.04) circle (1.5pt);
\fill (8.49,1.89) circle (1.5pt);
\fill (6,3.7) circle (1.5pt);
\draw (6.16,3.85) node {$v$};
\fill (3.51,1.89) circle (1.5pt);
\fill  (5.22,-0.8) circle (1.5pt);
\draw (5.5,-0.8) node {$v_4$};
\fill (7.6,1.75) circle (1.5pt);
\fill (7.2,-0.6) circle (1.5pt);
\draw (7.9,1.7) node {$v_3$};
\draw (7.1,-0.3) node {$v_1$};
\draw[thick] (5.22,-0.8) to[out=120,in=90] (3.9,-0.7);
\draw[thick] (3.9,-0.7) to[out=-90,in=-120] (7.26,0.4);
\draw[thick] (7.26,0.4) to[out=60,in=-60] (8.58,2.14);
\draw[thick] (8.58,2.14) to[out=120,in=90] (7.6,1.75);
\draw[ thick] (6,3.7) to[out=-60,in=90] (7.96,-1.3);
\draw[ thick] (7.96,-1.3) to[out=-90,in=-90] (7.2,-0.6);
\end{tikzpicture}
\hspace{0.3cm}
\begin{tikzpicture}[scale=0.85]
\draw (4.46,-1.04)-- (6,3.7);
\draw (6,3.7)-- (7.54,-1.04);
\draw (7.54,-1.04)-- (3.51,1.89);
\draw (3.51,1.89)-- (8.49,1.89);
\draw (8.49,1.89)-- (4.46,-1.04);
\draw [thick](6,3.7)-- (7.12,-0.94);
\fill (4.46,-1.04) circle (1.5pt);
\fill (7.54,-1.04) circle (1.5pt);
\fill (8.49,1.89) circle (1.5pt);
\fill (6,3.7) circle (1.5pt);
\draw (6.26,3.65) node {$v$};
\fill (3.51,1.89) circle (1.5pt);
\fill  (7.12,-0.94) circle (1.5pt);
\draw (7.2,-1.2) node {$v_4$};
\fill  (6.52,0.02) circle (1.5pt);
\draw (6.6,0.28) node {$v_1$};
\draw[ thick] (6,3.7) to[out=-60,in=90] (7.96,-1.4);
\draw[ thick] (7.96,-1.4) to[out=-90,in=0] (7.2,-1.4);
\draw[ thick] (7.2,-1.4) to[out=180,in=-90] (6.52,0.02);
\draw[thick] (7.12,-0.94) to[out=60,in=90] (8.2,-1.2);
\draw[thick] (8.2,-1.2) to[out=-90,in=0] (5.4,-1);
\draw[thick] (5.4,-1) to[out=180,in=-90] (3.4,2.14);
\draw[thick] (3.4,2.14) to[out=90,in=180] (6.16,3.9);
\draw[thick] (6.16,3.9) to[out=0,in=90] (7.6,1.75);
\fill (7.6,1.75) circle (1.5pt);
\draw (7.9,1.7) node {$v_3$};
\end{tikzpicture}
\caption{The musquash $\T(c_1)$ with three edges of $\T(c_2)$.}
\label{figure:22}
\end{figure}

Next, we attach the edge $v_1v_2$ to the drawings in Figure~\ref{figure:22}. Since $c_2$ is thrackled as a standard musquash, the resulting drawing satisfies the following condition: the crossings of the edges $vv_1$ and $v_1v_2$ with the edge $v_3v_4$ have  opposite orientation, and on $v_3v_4$ counting from $v_3$, the former crossing precedes the latter  one. This gives us a unique possible way (up to isotopy and Reidemeister moves) of attaching the edge $v_1v_2$ to each of the drawings in Figure~\ref{figure:22}. The resulting thrackles are shown in Figure~\ref{figure:23}.

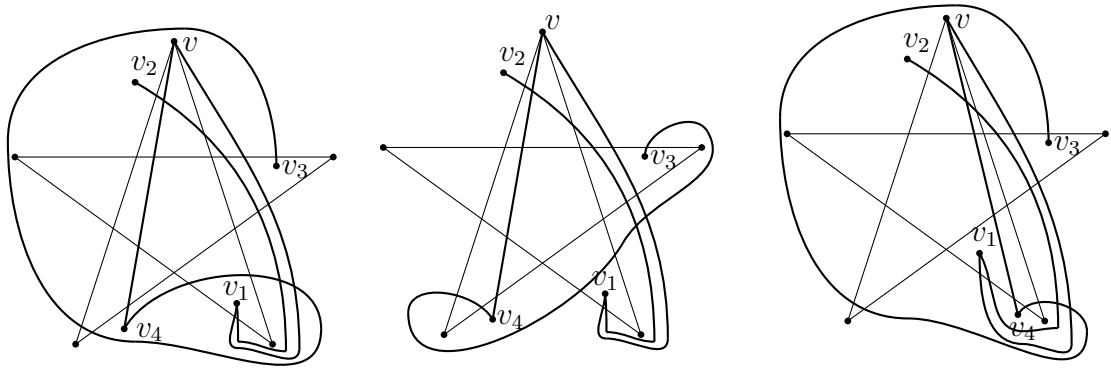
\begin{figure}[h]
\centering
\begin{tikzpicture}[scale=0.85]
\draw (4.46,-1.04)-- (6,3.7);
\draw (6,3.7)-- (7.54,-1.04);
\draw (7.54,-1.04)-- (3.51,1.89);
\draw (3.51,1.89)-- (8.49,1.89);
\draw (8.49,1.89)-- (4.46,-1.04);
\draw [thick](6,3.7)-- (5.22,-0.8);
\fill (4.46,-1.04) circle (1.5pt);
\fill (7.54,-1.04) circle (1.5pt);
\fill (8.49,1.89) circle (1.5pt);
\fill (6,3.7) circle (1.5pt);
\draw (6.26,3.65) node {$v$};
\fill (3.51,1.89) circle (1.5pt);
\fill  (5.22,-0.8) circle (1.5pt);
\draw (5.62,-0.85) node {$v_4$};
\fill  (6.98,-0.4) circle (1.5pt);
\draw (6.98,-0.2) node {$v_1$};
\fill (7.6,1.75) circle (1.5pt);
\draw (7.9,1.7) node {$v_3$};
\fill  (5.39,3.06) circle (1.5pt);
\draw (5.54,3.32) node {$v_2$};
\draw[thick] (5.22,-0.8) to[out=60,in=90] (8.3,-0.8);
\draw[thick] (8.3,-0.8) to[out=-90,in=0] (5.4,-1);
\draw[thick] (5.4,-1) to[out=180,in=-90] (3.4,2.14);
\draw[thick] (3.4,2.14) to[out=90,in=180] (6.16,3.9);
\draw[thick] (6.16,3.9) to[out=0,in=90] (7.6,1.75);
\draw[ thick] (6,3.7) to[out=-60,in=90] (7.96,-1.1);
\draw[ thick] (7.96,-1.1) to[out=-90,in=0] (7,-1.1);
\draw[ thick] (7,-1.1) to[out=180,in=-90] (6.98,-0.4);
\draw[ thick] (6.98,-0.4) to[out=-85,in=90] (7,-1);
\draw[ thick] (7,-1) to[out=0,in=180] (7.75,-1.15);
\draw[ thick] (7.75,-1.15) to[out=90,in=-30] (5.39,3.06);
\end{tikzpicture}
\hspace{0.3cm}
\begin{tikzpicture}[scale=0.85]
\draw (4.46,-1.04)-- (6,3.7);
\draw (6,3.7)-- (7.54,-1.04);
\draw (7.54,-1.04)-- (3.51,1.89);
\draw (3.51,1.89)-- (8.49,1.89);
\draw (8.49,1.89)-- (4.46,-1.04);
\draw [thick] (6,3.7)-- (5.22,-0.8);
\fill (4.46,-1.04) circle (1.5pt);
\fill (7.54,-1.04) circle (1.5pt);
\fill (8.49,1.89) circle (1.5pt);
\fill (6,3.7) circle (1.5pt);
\draw (6.16,3.85) node {$v$};
\fill (3.51,1.89) circle (1.5pt);
\fill  (5.22,-0.8) circle (1.5pt);
\draw (5.5,-0.8) node {$v_4$};
\fill (7.6,1.75) circle (1.5pt);
\draw (7.9,1.7) node {$v_3$};
\fill  (6.98,-0.4) circle (1.5pt);
\draw (6.98,-0.2) node {$v_1$};
\fill  (5.39,3.06) circle (1.5pt);
\draw (5.54,3.32) node {$v_2$};
\draw[thick] (5.22,-0.8) to[out=120,in=90] (3.9,-0.7);
\draw[thick] (3.9,-0.7) to[out=-90,in=-120] (7.26,0.4);
\draw[thick] (7.26,0.4) to[out=60,in=-60] (8.58,2.14);
\draw[thick] (8.58,2.14) to[out=120,in=90] (7.6,1.75);
\draw[ thick] (6,3.7) to[out=-60,in=90] (7.96,-1.1);
\draw[ thick] (7.96,-1.1) to[out=-90,in=0] (7,-1.1);
\draw[ thick] (7,-1.1) to[out=180,in=-90] (6.98,-0.4);
\draw[ thick] (6.98,-0.4) to[out=-85,in=90] (7,-1);
\draw[ thick] (7,-1) to[out=0,in=180] (7.75,-1.15);
\draw[ thick] (7.75,-1.15) to[out=90,in=-30] (5.39,3.06);
\end{tikzpicture}
\hspace{0.3cm}
\begin{tikzpicture}[scale=0.85]
\draw (4.46,-1.04)-- (6,3.7);
\draw (6,3.7)-- (7.54,-1.04);
\draw (7.54,-1.04)-- (3.51,1.89);
\draw (3.51,1.89)-- (8.49,1.89);
\draw (8.49,1.89)-- (4.46,-1.04);
\draw[thick] (6,3.7)-- (7.12,-0.94);
\fill (4.46,-1.04) circle (1.5pt);
\fill (7.54,-1.04) circle (1.5pt);
\fill (8.49,1.89) circle (1.5pt);
\fill (6,3.7) circle (1.5pt);
\draw (6.26,3.65) node {$v$};
\fill (3.51,1.89) circle (1.5pt);
\fill  (7.12,-0.94) circle (1.5pt);
\draw (7.2,-1.2) node {$v_4$};
\fill  (6.52,0.02) circle (1.5pt);
\draw (6.6,0.28) node {$v_1$};
\fill (7.6,1.75) circle (1.5pt);
\draw (7.9,1.7) node {$v_3$};
\fill  (5.39,3.06) circle (1.5pt);
\draw (5.54,3.32) node {$v_2$};
\draw[ thick] (6,3.7) to[out=-60,in=90] (7.96,-1.4);
\draw[ thick] (7.96,-1.4) to[out=-90,in=0] (7.2,-1.4);
\draw[ thick] (7.2,-1.4) to[out=180,in=-90] (6.52,0.02);
\draw[thick] (7.12,-0.94) to[out=60,in=90] (8.2,-1.2);
\draw[thick] (8.2,-1.2) to[out=-90,in=0] (5.4,-1);
\draw[thick] (5.4,-1) to[out=180,in=-90] (3.4,2.14);
\draw[thick] (3.4,2.14) to[out=90,in=180] (6.16,3.9);
\draw[thick] (6.16,3.9) to[out=0,in=90] (7.6,1.75);
\draw[ thick] (6.52,0.02) to[out=-60,in=180] (7.2,-1.2);
\draw[ thick] (7.2,-1.2) to[out=0,in=180] (7.75,-1.15);
\draw[ thick] (7.75,-1.15) to[out=90,in=-30] (5.39,3.06);
\end{tikzpicture}
\caption{The musquash $\T(c_1)$ with four edges of $\T(c_2)$.}
\label{figure:23}
\end{figure}

Finally, we show that the vertices $v_2$ and $v_3$ cannot be joined by an edge in such a way that the resulting drawing is a thrackle. The edge $v_2v_3$ cannot cross the edges $v_1v_2$, $v_3v_4$, and also, since $c_2$ must be thrackled as a standard musquash, $v_2v_3$ is constrained to cross the edges $vv_1$, $vv_4$ at particular segments. For the drawing on the left in Figure~\ref{figure:23}, the set of arcs that the edge $v_2v_3$ cannot cross is shown in bold in Figure~\ref{figure:24}.

\begin{figure}[h]
\begin{tikzpicture}[scale=0.85]
\draw (4.46,-1.04)-- (6,3.7);
\draw (6,3.7)-- (7.54,-1.04);
\draw (7.54,-1.04)-- (3.51,1.89);
\draw (3.51,1.89)-- (8.49,1.89);
\draw (8.49,1.89)-- (4.46,-1.04);
\draw [thick](6,3.7)-- (5.88,2.76);
\draw [very thick](5.88,2.76)-- (5.22,-0.8);
\fill (4.46,-1.04) circle (1.5pt);
\fill (7.54,-1.04) circle (1.5pt);
\fill (8.49,1.89) circle (1.5pt);
\fill (6,3.7) circle (1.5pt);
\draw (6.26,3.65) node {$v$};
\fill (3.51,1.89) circle (1.5pt);
\fill  (5.22,-0.8) circle (1.5pt);
\draw (5.62,-0.85) node {$v_4$};
\fill  (6.98,-0.4) circle (1.5pt);
\draw (6.98,-0.2) node {$v_1$};
\fill (7.6,1.75) circle (1.5pt);
\draw (7.9,1.7) node {$v_3$};
\draw (7.9,1.15) node {$C$};
\draw (4.4,-1.3) node {$2$};
\draw (7.8,2.1) node {$A$};
\draw (7.35,2.1) node {$B$};
\draw (7.6,-0.7) node {$5$};
\fill  (5.39,3.06) circle (1.5pt);
\draw (5.54,3.32) node {$v_2$};
\draw (3.6,2.14) node {$4$};
\draw (8.66,2.14) node {$3$};
\draw[very thick] (5.22,-0.8) to[out=60,in=90] (8.3,-0.8);
\draw[very thick] (8.3,-0.8) to[out=-90,in=0] (5.4,-1);
\draw[very thick] (5.4,-1) to[out=180,in=-90] (3.4,2.14);
\draw[very thick] (3.4,2.14) to[out=90,in=180] (6.16,3.9);
\draw[very thick] (6.16,3.9) to[out=0,in=90] (7.6,1.75);
\draw[thick] (6,3.7) to[out=-60,in=90] (7.96,-0.2);
\draw[ultra thick] (7.96,-0.2) to[out=-90,in=90] (7.96,-1.1);
\draw[ ultra thick] (7.96,-1.1) to[out=-90,in=0] (7,-1.1);
\draw[ ultra thick] (7,-1.1) to[out=180,in=-90] (6.98,-0.4);
\draw[ ultra thick] (6.98,-0.4) to[out=-85,in=90] (7,-1);
\draw[ ultra thick] (7,-1) to[out=0,in=180] (7.75,-1.15);
\draw[ ultra thick] (7.75,-1.15) to[out=90,in=-30] (5.39,3.06);
\end{tikzpicture}
\caption{The vertices $v_2$ and $v_3$ cannot be joined by an edge.}
\label{figure:24}
\end{figure}
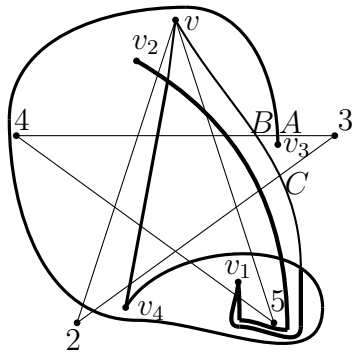

Now the first crossing on the edge $v_3v_2$ starting from the vertex $v_3$ cannot be the crossing with either of the segments $A3$, $C3$, as otherwise the vertex $v_2$ becomes unreachable. So we first cross either $BA$ or $CB$, where in the latter case the next crossing must be with the edge $34$, to the left of $B$. Both of these cases lead to a contradiction: eventually crossing the edge $45$ we will not be able to end up at $v_2$.

Similar arguments for the remaining two drawings in Figure~\ref{figure:23} show that the edge $v_2v_3$ cannot be inserted without violating the thrackle property.
\end{proof}


\section{Concluding remarks}
\label{section:remarks}

In this paper we only considered standard musquashes, the odd ones. By \cite{GD1999} there is the only one even musquash, the thrackled six-cycle. A direct generalisation of the Theorem to the case when $c$ is a six-cycle is false, because $c$ and $c'$ can be disjoint in $G$. The disjoint union of a six-cycle and a three- or a five-cycle \emph{can} be thrackled (which does not violate the Thrackle Conjecture) following the approach in \cite[Section~2]{WOO71}: for both a three- and a five-thrackle, there is a curve which crosses every edge exactly once. We can take a thin strip around that curve and then place the six-musquash inside that strip so that  three vertices are close to the one end, and the others three close to the other end, as in \cite[Figure~6]{WOO71}. However, the figure-eight graph comprised of a three- and a six-cycle cannot be thrackled (by duplicating the three-cycle we get the theta-graph $\Theta_3$, with three paths of length $3$ sharing common endpoints, which has no thrackle drawing by \cite[Theorem~5.1]{LPS97}). However, to the best of our knowledge, the question of whether the figure-eight graph comprised of a five- and a six-cycle can be thrackled remains open.

Finally, we give a more precise statement of Problem~1 in \cite{GY2012}, to which we know no counterexamples: \emph{is it true that the sum of orientations of crossings on any edge of an odd thrackled cycle is $0$, and on any edge of an even thrackled cycle is $\pm 1$?}


\vskip.5cm
\emph{Acknowledgements.} This research would have never been done and this paper would have never been written without invaluable, incredibly generous contribution from Grant Cairns, at all the stages, from mathematics to presentation. We express to him our deepest gratitude.


\bibliographystyle{amsplain}
\bibliography{paper14}

\end{document}